\documentclass[10pt]{article}
\usepackage[margin=1in]{geometry}
\usepackage[utf8]{inputenc}

\usepackage{fullpage}
\usepackage{enumerate}
\usepackage{authblk}
\usepackage[colorlinks=true, citecolor=ForestGreen, linkcolor=BrickRed, urlcolor=blue]{hyperref}
\usepackage{amsthm,amsmath, amssymb, amsfonts,multicol}
\usepackage{authblk}
\usepackage{verbatim,lineno}
\usepackage{tikz}
\usetikzlibrary{fit,matrix,positioning,shapes}
\usetikzlibrary{decorations.shapes}
\usepackage{graphicx,float}
\usepackage{mathtools}
\usepackage[dvipsnames]{xcolor}

\usepackage{mathrsfs}

\newcommand{\precdot}{\prec\mathrel{\mkern-5mu}\mathrel{\cdot}}

\newtheorem{theorem}{Theorem}[section]
\newtheorem{lemma}[theorem]{Lemma}
\newtheorem{corollary}[theorem]{Corollary}

\newtheorem{ex}[theorem]{Example}

\newtheorem{prop}[theorem]{Proposition}
\newtheorem*{theorem*}{Theorem}
\newtheorem{remark}[theorem]{Remark}
\newtheorem*{que*}{Question}

\newcommand{\ind}{{\rm ind~}}

\newcommand{\demph}[1]{\textcolor{RoyalBlue}{\emph{#1}}}

\begin{document}

\title{The nilradical of a seaweed algebra}

\author[*]{Vincent E. Coll}

\author[**]{Nicholas Mayers}

\affil[*]{Department of Mathematics, Lehigh University, Bethlehem, PA, 18015}

\affil[**]{Department of Mathematics, Kennesaw State University, Kennesaw, GA, 30144}

%\linenumbers
\maketitle

\begin{abstract} 
\noindent
Seaweed subalgebras of $\mathfrak{gl}(n,\mathbb{C})$ and $\mathfrak{sl}(n,\mathbb{C})$ are combinatorially defined matrix Lie algebras whose index admits a closed-form description in terms of an associated graph called a meander. In this paper, we study the nilradicals of these algebras with our main result establishing an explicit formula for their index in terms of an edge-weighted variation of the meander. We further prove that each such nilradical decomposes as a direct sum of the center of the seaweed subalgebra with a nilpotent Lie poset algebra, and we provide a meander-theoretic procedure for recovering the underlying poset.
\end{abstract}

\noindent
\textit{Key Words and Phrases}: seaweed algebra, nilradical, index, poset, Lie poset algebra

\section{Introduction}

Seaweed subalgebras of reductive Lie algebras arise as intersections of pairs of opposite parabolic subalgebras. In the case of seaweed subalgebras of $\mathfrak{gl}(n,\mathbb{C})$ and $\mathfrak{sl}(n,\mathbb{C})$, there exist explicit combinatorial descriptions in terms of pairs of compositions (see Section~\ref{sec:prelim}).  Since their introduction by Dergachev and Kirillov \cite{DK}, seaweed subalgebras have been the focus of sustained study, in part because they form a class of Lie algebras for which certain structural invariants admit concrete computation.  These computations are often carried out using combinatorial data encoded in an associated planar graph, called a \textit{meander}
\cite{Coll1,CHM,DK,Joseph,Panyushev1,Panyushev2}.

One such invariant is the \textit{index}, which in the case of seaweed subalgebras of $\mathfrak{gl}(n,\mathbb{C})$ admits a closed-form description in terms of the associated meander. The \demph{index} of a Lie algebra $\mathfrak{g}$, originally defined by Dixmier \cite{D}, is given by
\[
\ind\mathfrak{g}
=\min_{F\in \mathfrak{g}^*}\dim\ker F([-,-]).
\]
Letting $\mathfrak{s}$ be a seaweed subalgebra of $\mathfrak{gl}(n,\mathbb{C})$ and $\mathrm{M}(\mathfrak{s})$ denote its associated meander, Dergachev and Kirillov established the following result \cite{DK}.

\begin{theorem}\label{thm:indexform}
If $\mathfrak{s}$ is a seaweed subalgebra of $\mathfrak{gl}(n,\mathbb{C})$, then

\begin{equation}\label{one}
\ind \mathfrak{s} = 2C + P,
\end{equation}
where $C$ is the number of cycles and $P$ is the number of paths in $\mathrm{M}(\mathfrak{s})$.
\end{theorem}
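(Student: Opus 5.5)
The plan is to prove the Dergachev--Kirillov formula by induction on $n$, using a reduction of the meander that preserves both the index and the quantity $2C+P$. I use the standard model: $\mathfrak{s}$ is determined by two compositions $a=(a_1,\dots,a_k)$ and $b=(b_1,\dots,b_l)$ of $n$. The meander $\mathrm{M}(\mathfrak{s})$ has vertices $1,\dots,n$. Top arcs nest within each block of $a$, joining $i$ to its mirror image in that block. Bottom arcs do the same for the blocks of $b$. Each vertex has degree at most two, so the meander is a disjoint union of cycles and paths, where an isolated vertex counts as a path.

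\textbf{Base case.} When $a=b=(n)$ we have $\mathfrak{s}=\mathfrak{gl}(n)$, whose index is $n$ (its rank). In the meander, every top arc coincides with a bottom arc, giving $\lfloor n/2\rfloor$ two-cycles, plus one fixed vertex when $n$ is odd. So $2C+P=n$.

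\textbf{Reduction step (Panyushev's move).} Assume $a_1>b_1$; the other cases are symmetric, and the case $a_1=b_1$ splits off a copy of $\mathfrak{gl}(a_1)$ as a direct summand. I will show that
\[
\mathfrak{s}\bigl(a_1,\dots,a_k \mid b_1,\dots,b_l\bigr)
\quad\text{and}\quad
\mathfrak{s}\bigl(2b_1-a_1,\,a_2,\dots \mid b_1,\,b_2,\dots\bigr)
\]
have the same index when $a_1<2b_1$. The degenerate cases are handled separately: $a_1=2b_1$ removes the block entirely, and $a_1>2b_1$ uses the variant where $(a_1,\dots \mid b_1,\dots)$ becomes $(a_1-2b_1,\dots \mid \dots)$ after the appropriate relabeling. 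The same move is then checked at the meander level. Vertices $1,\dots,b_1$ are matched by the bottom arcs among themselves and, via the top arcs of block $a_1$, to vertices $a_1-b_1+1,\dots,a_1$. Contracting each path segment that passes through the first $b_1$ vertices produces exactly the meander of the reduced pair. This contraction preserves the numbers of cycles and paths, provided the fixed-point conventions are tracked correctly. Since $n$ strictly decreases, induction finishes the proof, and direct sums add on both sides.

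\textbf{Main obstacle.} The hard part is the Lie-algebraic half of the reduction: showing that the index is unchanged. I would use Raïs's formula for the index of a semidirect product $\mathfrak{q}\ltimes V$ with $V$ abelian. Here $V$ is the block of entries of $\mathfrak{s}$ in rows $1..b_1$ and columns $b_1+1..a_1$, which is abelian since those entries lie inside the parabolic coming from $a$ but outside the block coming from $b$. Raïs's formula gives
\[
\ind(\mathfrak{q}\ltimes V)=\bigl(\dim V-\max\dim \mathfrak{q}\cdot\xi\bigr)+\ind \mathfrak{q}_\xi
\]
for generic $\xi\in V^*$. The key computation is to identify the generic stabilizer $\mathfrak{q}_\xi$ with the reduced seaweed, up to a summand whose contributions cancel. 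This is essentially a rank computation for generic rectangular matrices, and it is where careful bookkeeping is needed. If that becomes unwieldy, the fallback is Dergachev--Kirillov's original route: compute $\ker F([-,-])$ for an explicit regular functional $F$ supported on the meander arcs, and read off the cycle and path contributions directly.
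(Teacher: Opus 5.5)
The paper does not prove this theorem; it quotes it from Dergachev--Kirillov, so your argument has to stand on its own. Your route is legitimate in principle: induction on $n$ through Panyushev-type reductions, with Ra\"{\i}s's formula applied to an abelian ideal. The base case, the splitting for $a_1=b_1$, and the top/bottom symmetry are all fine. As written, however, the reduction step is both misstated and unproved, which is a genuine gap.

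First, the pair $(2b_1-a_1,a_2,\dots\mid b_1,b_2,\dots)$ is not a seaweed. Its top entries sum to $n-2(a_1-b_1)$ while its bottom entries sum to $n$; for $\mathfrak{p}\frac{3}{2|1}$ you would get $(1\mid 2,1)$. For $a_1>b_1$ the correct moves are:
\begin{itemize}
\item[(1)] $(b_1,a_2,\dots\mid 2b_1-a_1,b_2,\dots)$ if $b_1<a_1<2b_1$;
\item[(2)] $(b_1,a_2,\dots\mid b_2,\dots)$ if $a_1=2b_1$;
\item[(3)] $(a_1-2b_1,b_1,a_2,\dots\mid b_2,\dots)$ if $a_1>2b_1$.
\end{itemize}
Your description of the last case omits the new top block of size $b_1$.

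Second, your meander check does not fit the case you state it for. Joining vertices $1,\dots,b_1$ by top arcs to $a_1-b_1+1,\dots,a_1$ and contracting them out requires these two intervals to be disjoint, i.e.\ $a_1\ge 2b_1$; in that regime it produces signature (3) above. When $b_1<a_1<2b_1$ the intervals overlap, and the top arcs carry $[a_1-b_1+1,b_1]$ into itself. Only $a_1-b_1$ vertices should disappear, and deleting them and shifting does not reproduce the arcs of $(b_1\mid 2b_1-a_1)$. You need an actual argument that the two local pictures pair up the exit vertices $b_1+1,\dots,a_1$ in the same way and contain the same numbers of internal cycles and paths. One handle: in both pictures, ``bottom arc, then top arc'' acts as translation by $a_1-b_1$.

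The more serious omission is the Lie-theoretic half. It is the real content of the theorem, and you only announce it. Your set-up is sound. In the paper's convention, where $\mathbf{a}$ contributes the strictly lower entries, take the transposed rectangle $V=\mathrm{span}\{E_{p,q}: q\le b_1<p\le a_1\}$. This is an abelian ideal, and its complementary subalgebra is $\mathfrak{q}\cong\mathfrak{gl}(b_1)\oplus\mathfrak{p}\frac{a_1-b_1|a_2|\cdots}{b_2|\cdots}$. Two things must then be proved:
\begin{itemize}
\item[(i)] $\mathfrak{q}$ has an open orbit on $V^*$, so the first term of Ra\"{\i}s's formula vanishes. This comes from $GL(b_1)$ alone if $a_1-b_1\le b_1$, and otherwise from $GL(b_1)$ together with the Borel subalgebra in the $[b_1+1,a_1]$ corner of the second summand.
\item[(ii)] For generic $\xi$, which has rank $\min(b_1,a_1-b_1)$, the stabilizer $\mathfrak{q}_\xi$ is isomorphic to the reduced seaweed itself. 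Saying ``up to a summand whose contributions cancel'' is not enough when that summand is left unspecified.
\end{itemize}
A dimension count is consistent with (ii): $\dim\mathfrak{s}-\dim\mathfrak{s}_{\mathrm{red}}=2b_1(a_1-b_1)=2\dim V$ in each case. But the explicit identification of $\mathfrak{q}_\xi$ is exactly where Panyushev's inductive argument does its work, and it is missing here. The Dergachev--Kirillov fallback is likewise only named, not carried out.
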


\noindent
From this result, one obtains a corresponding formula for the index of seaweed subalgebras of $\mathfrak{sl}(n,\mathbb{C})$ by subtracting one from the right-hand side of ($\ref{one}$). Owing to its connections with areas such as deformation theory and quantum group theory \cite{G1,G2}, the problem of computing the  index of Lie algebras has received considerable attention \cite{indexnil,Cameron,CHM,CM,indexnilposet,tBCD,DK,D,Elash,onindexnil,Panyushev1,Panyushev2,Panyushev3,TY}.

In this paper, we study the index of nilradicals of seaweed subalgebras of $\mathfrak{gl}(n,\mathbb{C})$ and $\mathfrak{sl}(n,\mathbb{C})$.\footnote{A formula for the index of nilradicals of certain seaweed subalgebras can be found in \cite{TY}. The formula of \cite{TY} is given in terms of algebraic invariants of the algebra, while that developed here is in terms of an edge-weighted graph. It is worth mentioning that our proofs do not depend on the formula of \cite{TY}, but instead on a relationship between the algebras of interest here and those of \cite{indexnilposet}.}  Our first step is to show that the nilradical of such a seaweed subalgebra $\mathfrak{s}$ decomposes as a direct sum of the center of $\mathfrak{s}$ with a nilpotent Lie algebra $\mathfrak{g}^{\prec}(\mathcal{P}_{\mathfrak{s}})$ determined by a poset $\mathcal{P}_{\mathfrak{s}}$.  More precisely, the algebra $\mathfrak{g}^{\prec}(\mathcal{P}_{\mathfrak{s}})$ is a nilpotent Lie poset algebra in the sense of \cite{indexnilposet}, and we show that the poset $\mathcal{P}_{\mathfrak{s}}$ can
be recovered from a diagram closely related to $\mathrm{M}(\mathfrak{s})$.

With respect to the index, a key ingredient in our approach is a poset-theoretic formula for the index of nilpotent Lie poset algebras established in \cite{indexnilposet}.  By combining this result with the decomposition discussed above, we obtain a combinatorial formula for the index of the nilradical of $\mathfrak{s}$ in terms of an edge-weighted modification of the meander $\mathrm{M}(\mathfrak{s})$.  Concretely, the index is obtained by summing the edge weights together with the number of certain subgraphs, called \textit{central components}, of the modified meander, subtracting one in the case of seaweed subalgebras of $\mathfrak{sl}(n,\mathbb{C})$.  In addition, we show that the number of edges in the original meander $\mathrm{M}(\mathfrak{s})$, together with the number of central components (again with the appropriate subtraction in the $\mathfrak{sl}(n,\mathbb{C})$ case), yields a lower bound for the index of the nilradical which is sharp for certain families.

The remainder of the paper is organized as follows.  In Section~\ref{sec:prelim}, we review background material on seaweed subalgebras, posets, and nilpotent Lie poset algebras. In Section~\ref{sec:posetalg}, we show that the nilradicals of seaweed subalgebras of $\mathfrak{gl}(n,\mathbb{C})$ and $\mathfrak{sl}(n,\mathbb{C})$ decompose as the direct sum of the center of the seaweed subalgebra with a nilpotent Lie poset algebra, and we describe how the associated poset is constructed.  In Section~\ref{sec:index}, we use this decomposition to derive a combinatorial formula for the index of the nilradical in terms of an edge-weighted meander. Finally, in Section~\ref{sec:fd}, we outline several directions for future work.

\section{Preliminaries}\label{sec:prelim}

In this section we introduce the necessary background concerning seaweeed subalgebras, posets, and nilpotent Lie poset algebras. For $n\in\mathbb{Z}_{>0}$, we let $[n]:=\{1,\hdots,n\}$.

\subsection{Seaweed algebras}\label{sec:prelimsw}

Given a reductive Lie algebra $\mathfrak{r},$ a \demph{seaweed subalgebra of} $\mathfrak{r}$ is the intersection of two parabolic subalgebras $\mathfrak{p},\mathfrak{p}'\subset\mathfrak{r}$ satisfying $\mathfrak{p}+\mathfrak{p}'=\mathfrak{r}.$ When restricting to seaweed subalgebras of $\mathfrak{gl}(N,\mathbb{C})$ and $\mathfrak{sl}(N,\mathbb{C})$, equivalent definitions in terms of compositions are available which prove more useful to us here. For brevity, ongoing we refer to seaweed subalgebras of $\mathfrak{gl}(N,\mathbb{C})$ simply as ``seaweed algebras", and seaweed subalgebras of $\mathfrak{sl}(N,\mathbb{C})$ as ``type-A seaweed algebras".

Let $E^N_{i,j}$ denote the $N\times N$ matrix with a 1 in the $(i,j)$-entry and $0$'s elsewhere. When the size $N$ of the matrix is clear, we omit $N$ from the notation. Recall that a \demph{composition} of a positive integer $N$ is an ordered list $(a_1,\hdots,a_n)\in\mathbb{Z}^n_{>0}$ for $n\ge 1$ with $\sum_{i=1}^na_i=N$. For a composition $\mathbf{a}=(a_1,\hdots,a_n)$ of $N$, define the ordered set partition $S(\mathbf{a}):=A_1\cup\cdots\cup A_n$ where $$A_1=\{1,\hdots,a_1\}\quad\text{and}\quad A_i=\left\{\sum_{j=1}^{i-1}a_j+1,\hdots,\sum_{j=1}^{i}a_j\right\}\quad~\text{for}~1<i\le n.$$ Given two compositions of $N$, say $\mathbf{a}=(a_1,\hdots,a_n)$ and $\mathbf{b}=(b_1,\hdots,b_m),$ if $$S(\mathbf{a})=A_1\cup\cdots\cup A_n\quad\text{and}\quad S(\mathbf{b})=B_1\cup\cdots\cup B_m,$$ then define the \demph{seaweed algebra} $\mathfrak{p}\frac{a_1|\cdots|a_n}{b_1|\cdots|b_m}$ to be the subalgebra of $\mathfrak{gl}(N,\mathbb{C})$ whose underlying vector space is spanned by the collection $$\{E_{j,j}~|~j\in [N]\}\cup \bigcup_{i=1}^n\left\{E_{p,q}~|~p,q\in A_i~\text{with}~p>q\right\}\cup \bigcup_{i=1}^m\left\{E_{p,q}~|~p,q\in B_i~\text{with}~p<q\right\}.$$ When the entries of the compositions $\mathbf{a}$ and $\mathbf{b}$ are not important, we simply denote the seaweed algebra $\mathfrak{p}\frac{a_1|\cdots|a_n}{b_1|\cdots|b_m}$ by $\mathfrak{p}\frac{\mathbf{a}}{\mathbf{b}}$. Similarly, define the \demph{type-A seaweed algebra} $\mathfrak{p}^A\frac{a_1|\cdots|a_n}{b_1|\cdots|b_m}$, or simply $\mathfrak{p}^A\frac{\mathbf{a}}{\mathbf{b}}$, to be the subalgebra of $\mathfrak{sl}(N,\mathbb{C})$ whose underlying vector space is spanned by the collection $$\{E_{1,1}-E_{j,j}~|~j\in [N]\backslash\{1\}\}\cup \bigcup_{i=1}^n\left\{E_{p,q}~|~p,q\in A_i~\text{with}~p>q\right\}\cup \bigcup_{i=1}^m\left\{E_{p,q}~|~p,q\in B_i~\text{with}~p<q\right\}.$$ For an example, the seaweed algebra $\mathfrak{p}\frac{2|4}{1|2|3}$ consists of all matrices in $\mathfrak{gl}(6,\mathbb{C})$ of the form illustrated in Figure~\ref{fig:seaweed} (left) where $\ast$'s denote potential nonzero entries. Restricting to those matrices with trace zero results in the type-A seaweed algebra $\mathfrak{p}^A\frac{2|4}{1|2|3}$.

\begin{figure}[H]
$$\begin{tikzpicture}

\node at (0,0) {$\begin{bmatrix}
    * & & & & & \\
    * & * & * & & & \\
    & & * & & &  \\
    & & * & * & * & * \\
    & & * & * & * & * \\
    & & * & * & * & * \\
\end{bmatrix}$};
\draw[thick] (1.45,-1.25)--(1.45,0)--(-0.1,0)--(-0.1,0.85)--(-1.15,0.85)--(-1.15,1.25)--(-1.5,1.25);
\draw[thick] (-1.45,1.25)--(-1.45,0.45)--(-0.5,0.45)--(-0.5,-1.25)--(1.45,-1.25);
\node at (-1.7,0.9) {$2$};
\node at (-0.7,-0.5) {$4$};
\node at (-1.3,1.5) {$1$};
\node at (-0.6,1.1) {$2$};
\node at (0.7,0.2) {$3$};

\end{tikzpicture}\hspace{1.5cm}\begin{tikzpicture}[scale=1.3]
	\def\Node{\node [circle,  fill, inner sep=2pt]}
     \Node[label=left:$v_1$] (1) at (0,1.8) {};
	\Node[label=left:$v_2$] (2) at (1,1.8) {};
	\Node[label=left:$v_3$] (3) at (2,1.8) {};
	\Node[label=left:$v_4$] (4) at (3,1.8) {};
	\Node[label=left:$v_5$] (5) at (4,1.8) {};
	\Node[label=left:$v_6$] (6) at (5,1.8) {};
	\draw (1) to[bend left=50] (2);
	\draw (3) to[bend left=50] (6);
	\draw (4) to[bend left=50] (5);
	\draw (2) to[bend right=50] (3);
	\draw (4) to[bend right=50] (6);
\end{tikzpicture}$$
\caption{The seaweed algebra $\mathfrak{p}\frac{2|4}{1|2|3}$ (left) and its associated meander (right)}\label{fig:seaweed}
\end{figure}
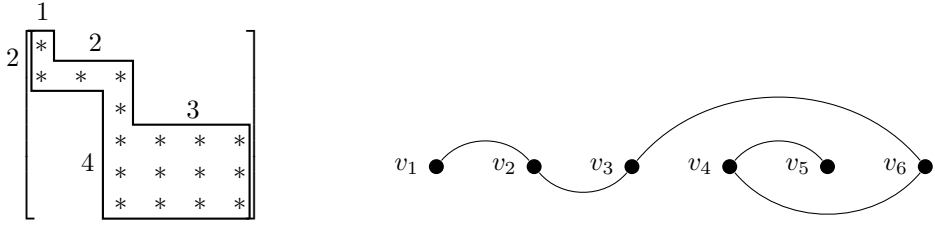

As noted in the introduction, Dergachev and Kirillov \cite{DK} assign to each seaweed algebra $\mathfrak{s}=\mathfrak{p}\frac{a_1|\cdots|a_n}{b_1|\cdots|b_m}$ with $\sum_{i=1}^na_i=\sum_{j=1}^mb_j=N$ a planar graph $\mathrm{M}(\mathfrak{s})$ on $N$ vertices, called a \demph{meander}, which can be used to compute the index of $\mathfrak{s}$ via Theorem~\ref{thm:indexform}. Since we will extend the meander $\mathrm{M}(\mathfrak{s})$ of Dergachev and Kirillov in Section~\ref{sec:index}, we recall the definition of $\mathrm{M}(\mathfrak{s})$ here. Begin by placing $N$ labeled vertices $v_1, v_2, \dots , v_N$ from left to right along a horizontal line. Next, partition the vertices by grouping together the first $a_1$ vertices, then the next $a_2$ vertices, and so on, lastly grouping together the final $a_m$ vertices. We call each set of vertices formed a \demph{top block}. For each top block, add a concave-down edge, called a \demph{top edge}, from the first vertex of the block to the last vertex of the block, then add a top edge between the second vertex of the block and the second-to-last vertex of the block, and so on within each block, assuming that the vertices being connected are distinct. In a similar fashion, partition the vertices according to the composition $(b_1,\hdots,b_m)$ into \demph{bottom blocks}, and then place \demph{bottom edges}, i.e., concave-up edges, between vertices in each bottom block as in the case of top blocks. See Figure \ref{fig:seaweed} (right) for an illustration of $\mathrm{M}(\mathfrak{s})$ when $\mathfrak{s}=\mathfrak{p}\frac{2|4}{1|2|3}$. Moving forward, we will omit vertex labels in illustrations of $\mathrm{M}(\mathfrak{s})$ for the sake of clarity. Note that meander graphs can contain \demph{parallel edges}, i.e., pairs of edges between the same vertices. Since it will prove helpful later, we denote the collection of edges of $\mathrm{M}(\mathfrak{s})$ excluding parallel edges by $\mathrm{E}_1(\mathfrak{s})$.

Now, in extending $\mathrm{M}(\mathfrak{s})$, our goal is to establish a meander-theoretic formula for the index of the nilradicals of (type-A) seaweed algebras. Recall that an \demph{ideal} of a Lie algebra $\mathfrak{g}$ is a subspace $\mathfrak{k}$ of $\mathfrak{g}$ satisfying $[x,y]\in\mathfrak{k}$ for all $x\in\mathfrak{g}$ and $y\in\mathfrak{k}$. Letting $\mathfrak{g}_0=\mathfrak{g}$ and $$\mathfrak{g}_n=[\mathfrak{g},\mathfrak{g}_{n-1}]=\{[x,y]~|~x\in\mathfrak{g},~y\in\mathfrak{g}_{n-1}\}\quad\text{for}\quad n>0,$$ we refer to the sequence $\mathfrak{g}_i$ for $i\ge 0$ as the \demph{lower central series} of $\mathfrak{g}$, and say that $\mathfrak{g}$ is \demph{nilpotent} when there exists $m>0$ for which $\mathfrak{g}_m=\{0\}$. The \demph{nilradical} of a Lie algebra $\mathfrak{g}$, denoted $\mathfrak{n}(\mathfrak{g})$, is the unique largest (with respect to inclusion) nilpotent ideal of $\mathfrak{g}$. Contained in the nilradical of any Lie algebra $\mathfrak{g}$ is its \demph{center}, denoted $\mathrm{Z}(\mathfrak{g})$ and defined by $$\mathrm{Z}(\mathfrak{g}):=\{x\in\mathfrak{g}~|~[x,y]=0~\text{for all}~y\in\mathfrak{g}\}.$$

To aid in identifying the nilradical of a (type-A) seaweed subalgebra $\mathfrak{s}$ in Section~\ref{sec:posetalg}, we define special subgraphs of $\mathrm{M}(\mathfrak{s})$. First, for a composition $\mathbf{c}=(c_1,\hdots,c_n)$, define $$\mathrm{PS}(\mathbf{c}):=\left\{\sum_{i=1}^jc_i~|~1\le j\le n\right\},$$ i.e., $\mathrm{PS}(\mathbf{c})$ is the set of partial sums of the entries of $\mathbf{c}$. Then for either $\mathfrak{s}=\mathfrak{p}\frac{\mathbf{a}}{\mathbf{b}}$ or $\mathfrak{p}^A\frac{\mathbf{a}}{\mathbf{b}}$, if $$\mathrm{PS}(\mathbf{a})\cap\mathrm{PS}(\mathbf{b})=\{\lambda_1<\lambda_2<\cdots<\lambda_\ell\},$$ then define the \demph{central components} of $\mathrm{M}(\mathfrak{s})$ to be the subgraphs of $\mathrm{M}(\mathfrak{s})$ corresponding to the collections of vertices $\{v_i~|~1\le i\le \lambda_1\}$ and $\{v_i~|~\lambda_{j-1}<i\le\lambda_j\}$ for $1<i\le \ell$. We denote the collection of central components of $\mathrm{M}(\mathfrak{s})$ by $\mathrm{Cen}(\mathfrak{s})$. Note that $|\mathrm{Cen}(\mathfrak{s})|\ge 1$ as $\mathrm{PS}(\mathbf{a})\cap\mathrm{PS}(\mathbf{b})$ always contains the common sum of the entries of $\mathbf{a}$ and $\mathbf{b}$. Moreover, considering the definition of $\mathrm{M}(\mathfrak{s})$, if $v_i$ and $v_j$ are vertices of $\mathrm{M}(\mathfrak{s})$ which are connected by an edge, then they belong to the same central component. The following lemma includes some properties of $\mathrm{Cen}(\mathfrak{s})$ which will be used later.

\begin{lemma}\label{lem:cc}
Let $\mathfrak{s}$ be a (type-A) seaweed algebra.
\begin{enumerate}
    \item[$(a)$] If $E_{i,j}\in\mathfrak{s}$, then $v_i$ and $v_j$ are contained in a central component of $\mathrm{M}(\mathfrak{s})$.
    \item[$(b)$] If $v_i$ and $v_j$ with $i<j$ are contained in a central component of $\mathrm{M}(\mathfrak{s})$ and $i\le k<j$, then either $E_{k,k+1}\in\mathfrak{s}$ or $E_{k+1,k}\in\mathfrak{s}$.
\end{enumerate}
\end{lemma}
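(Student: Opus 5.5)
Both parts come down to comparing the block structures $S(\mathbf{a})$ and $S(\mathbf{b})$ with the partial sums in $\mathrm{PS}(\mathbf{a})\cap\mathrm{PS}(\mathbf{b})=\{\lambda_1<\cdots<\lambda_\ell\}$. Set $\lambda_0=0$. The $t$-th central component then has vertex set $\{v_i~|~\lambda_{t-1}<i\le\lambda_t\}$. The basic observation I will use throughout is this: an integer $k$ with $1\le k<N$ is a "cut" for a composition $\mathbf{c}$ exactly when $k\in\mathrm{PS}(\mathbf{c})$, that is, when $k$ and $k+1$ lie in different blocks of $S(\mathbf{c})$. Both parts are elementary, and there is no real obstacle beyond stating this translation carefully.

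For part $(a)$, suppose $E_{i,j}\in\mathfrak{s}$. If $i=j$ there is nothing to prove, since a single vertex lies in the central component containing it. Otherwise, by the definition of $\mathfrak{s}$, $i$ and $j$ lie in a common block $A_r$ (when $i>j$) or a common block $B_r$ (when $i<j$). Suppose $v_i$ and $v_j$ were in different central components. Then some $\lambda_t$ would satisfy $\min(i,j)\le\lambda_t<\max(i,j)$. Since $\lambda_t\in\mathrm{PS}(\mathbf{a})\cap\mathrm{PS}(\mathbf{b})$, it is a cut for both compositions, so the entries $\lambda_t$ and $\lambda_t+1$ lie in different blocks of both $S(\mathbf{a})$ and $S(\mathbf{b})$. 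Blocks are intervals, so $\min(i,j)$ and $\max(i,j)$ would also lie in different blocks of both set partitions. This contradicts the assumption that they share a block.

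For part $(b)$, let $v_i,v_j$ lie in the $t$-th central component with $i\le k<j$. Then $\lambda_{t-1}<k<\lambda_t$, so $k\notin\mathrm{PS}(\mathbf{a})\cap\mathrm{PS}(\mathbf{b})$. Hence $k$ fails to be a partial sum of $\mathbf{a}$ or fails to be a partial sum of $\mathbf{b}$. In the first case, $k$ and $k+1$ lie in the same block $A_r$, and since $k+1>k$ we get $E_{k+1,k}\in\mathfrak{s}$. In the second case, $k$ and $k+1$ lie in the same block $B_r$, which gives $E_{k,k+1}\in\mathfrak{s}$.

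The only point needing care is the claim that the prescribed basis elements $E_{p,q}$ of the type-A seaweed algebra coincide with those of the $\mathfrak{gl}$ version. They do, since only the diagonal part differs, so both the seaweed and type-A cases are handled simultaneously.
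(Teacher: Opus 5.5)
Your proof is correct and takes essentially the same approach as the paper. For (b), the paper argues by contradiction that a position $k$ with neither $E_{k,k+1}$ nor $E_{k+1,k}$ in $\mathfrak{s}$ must lie in $\mathrm{PS}(\mathbf{a})\cap\mathrm{PS}(\mathbf{b})$, which is the contrapositive of your direct argument; for (a), the paper says only that it follows from the definitions, and you have supplied those details.
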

\begin{proof}
    Part (a) follows immediately from the definitions of $\mathfrak{s}$, $\mathrm{M}(\mathfrak{s})$, and central components of $\mathrm{M}(\mathfrak{s})$. As for (b), assume otherwise. That is, assume that there exists $i\le k<j$ such that neither $E_{k,k+1}$ nor $E_{k+1,k}$ belong to $\mathfrak{s}$. Then, assuming $\mathfrak{s}=\mathfrak{p}\frac{\mathbf{a}}{\mathbf{b}}$ or $\mathfrak{p}^A\frac{\mathbf{a}}{\mathbf{b}}$ for compositions $$\mathbf{a}=(a_1,\hdots,a_n)\qquad\text{and}\qquad\mathbf{b}=(b_1,\hdots,b_m)$$ with $$S(\mathbf{a})=A_1\cup\cdots\cup A_n\qquad\text{and}\qquad S(\mathbf{b})=B_1\cup\cdots\cup B_m,$$ it follows that there exists $k_1\in [n]$ and $k_2\in [m]$ for which $k\in A_{k_1}\cap B_{k_2}$ and $k+1\in A_{k_1+1}\cap B_{k_2+1}$; but then $$i\le \sum_{\ell=1}^{k_1}a_\ell=\sum_{\ell=1}^{k_2}b_\ell=k<j$$ so that $k\in\mathrm{PS}(\mathbf{a})\cap\mathrm{PS}(\mathbf{b})$ and the vertices $v_i$ and $v_j$ cannot belong to the same central component of $\mathrm{M}(\mathfrak{s})$.
\end{proof}

\begin{ex}
    For $\mathfrak{s}=\mathfrak{p}\frac{2|2|3|1|1|3}{4|3|5}$, the set $\mathrm{Cen}(\mathfrak{s})$ consists of the subgraphs of $\mathrm{M}(\mathfrak{s})$ induced by the following collections of vertices: $\{v_1,v_2,v_3,v_4\}$, $\{v_5,v_6,v_7\}$, and $\{v_8,v_9,v_{10},v_{11},v_{12}\}$. The meander $\mathrm{M}(\mathfrak{s})$ is illustrated in Figure~\ref{fig:cencomp} with the central components identified.
    \begin{figure}[h]
        $$\begin{tikzpicture}[scale=1.3]
	\def\Node{\node [circle,  fill, inner sep=2pt]}
	\node at (0,0) {};
     \Node (1) at (0,0) {};
	\Node (2) at (1,0) {};
	\Node (3) at (2,0) {};
	\Node (4) at (3,0) {};
	\Node (5) at (4,0) {};
	\Node (6) at (5,0) {};
    \Node (7) at (6,0) {};
	\Node (8) at (7,0) {};
	\Node (9) at (8,0) {};
	\Node (10) at (9,0) {};
    \Node (11) at (10,0) {};
    \Node (12) at (11,0) {};
    \draw (1) to[bend left=50] (2);
    \draw (3) to[bend left=50] (4);
    \draw (5) to[bend left=50] (7);
    \draw (10) to[bend left=50] (12);
    \draw (1) to[bend right=50] (4);
    \draw (2) to[bend right=50] (3);
    \draw (5) to[bend right=50] (7);
    \draw (8) to[bend right=50] (12);
    \draw (9) to[bend right=50] (11);
    \draw (-0.25,-1)--(-0.25,-1.25)--(3.25,-1.25)--(3.25,-1);
    \draw (3.75,-1)--(3.75,-1.25)--(6.25,-1.25)--(6.25,-1);
    \draw (6.75,-1)--(6.75,-1.25)--(11.25,-1.25)--(11.25,-1);
    \node at (1.5, -1.5) {Central Component 1};
    \node at (5, -1.5) {Central Component 2};
    \node at (9, -1.5) {Central Component 3};
\end{tikzpicture}$$
\caption{$\mathrm{M}(\mathfrak{s})$ for $\mathfrak{s}=\mathfrak{p}\frac{2|2|3|1|1|3}{4|3|5}$}\label{fig:cencomp}
    \end{figure}
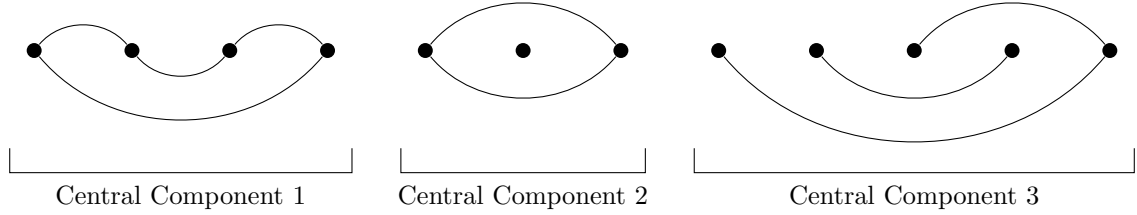
\end{ex}

Comparing the definition of $\mathrm{M}(\mathfrak{s})$ to that of its central components, we have that following.

\begin{prop}\label{prop:cccom}
    If $\mathfrak{s}$ is a (type-A) seaweed algebra, then $|\mathrm{Cen}(\mathfrak{s})|$ is equal to one more than the number of pairs of consecutive vertices $v_i$ and $v_{i+1}$ in $\mathrm{M}(\mathfrak{s})$ for which no edges cross over the region between the vertices.
\end{prop}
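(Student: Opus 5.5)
The plan is to show that, for $1\le k<N$, the gap between $v_k$ and $v_{k+1}$ is crossed by no edge of $\mathrm{M}(\mathfrak{s})$ exactly when $k\in\mathrm{PS}(\mathbf{a})\cap\mathrm{PS}(\mathbf{b})$. Granting this, we count. Write $\mathrm{PS}(\mathbf{a})\cap\mathrm{PS}(\mathbf{b})=\{\lambda_1<\cdots<\lambda_\ell\}$. By the definition of central components, $|\mathrm{Cen}(\mathfrak{s})|=\ell$. Since $\lambda_\ell=N$ always belongs to the intersection, the elements $k<N$ of the intersection are exactly $\lambda_1,\hdots,\lambda_{\ell-1}$. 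There are $\ell-1$ of them, and this gives $|\mathrm{Cen}(\mathfrak{s})|=1+\#\{\text{uncrossed gaps}\}$.

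To establish the characterization, I would first record which gaps a single block crosses. Take a top block $A_i=\{p,\hdots,q\}$. Its top edges join $p+t$ to $q-t$ for $0\le t<(q-p)/2$. The outermost edge $(p,q)$ spans the gap $(k,k+1)$ exactly when $p\le k<q$. Every other edge of the block is nested inside it and spans no gap outside this range. So the gaps crossed by top edges are precisely those with $k$ and $k+1$ in the same top block, i.e.\ $k\notin\mathrm{PS}(\mathbf{a})$. Note that a block of size one contributes no edges, but it also contains no internal gap, so the statement still holds. The same argument applies to bottom blocks and $\mathbf{b}$.

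It follows that gap $k$ is uncrossed if and only if $k\in\mathrm{PS}(\mathbf{a})$ and $k\in\mathrm{PS}(\mathbf{b})$. Equivalently, neither $E_{k,k+1}$ nor $E_{k+1,k}$ lies in $\mathfrak{s}$, which matches the reasoning in Lemma~\ref{lem:cc}(b). The type-A case is identical, since $\mathrm{M}(\mathfrak{s})$ and $\mathrm{Cen}(\mathfrak{s})$ depend only on $\mathbf{a}$ and $\mathbf{b}$.

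There is no real obstacle here. The only points needing care are the convention that an edge $(i,j)$ "crosses over" the gap $(k,k+1)$ exactly when $i\le k<j$, and the degenerate case of size-one blocks, which the argument above already handles.
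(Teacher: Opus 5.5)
Your proof is correct and takes the same approach as the paper. The paper states this proposition without proof, as an immediate consequence of comparing the definition of $\mathrm{M}(\mathfrak{s})$ with that of the central components. Your argument supplies exactly those details: the outermost edge of each block shows that the gap between $v_k$ and $v_{k+1}$ is uncrossed precisely when $k\in\mathrm{PS}(\mathbf{a})\cap\mathrm{PS}(\mathbf{b})$, and the count $\ell-1$ of such $k<N$ then gives the result.
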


\subsection{Posets and Nilpotent Lie poset algebras}

A \demph{finite poset} $(\mathcal{P}, \preceq_{\mathcal{P}})$ consists of a finite set $\mathcal{P}=[n]$ together with a binary relation $\preceq_{\mathcal{P}}$ which is reflexive, anti-symmetric, and transitive. Ongoing, let $\le$ denote the natural ordering on elements of $\mathbb{Z}$ and, when no confusion will arise, we simply denote a poset $(\mathcal{P}, \preceq_{\mathcal{P}})$ by $\mathcal{P}$, and $\preceq_{\mathcal{P}}$ by $\preceq$. Two posets $\mathcal{P}$ and $\mathcal{Q}$ are \demph{isomorphic}, denoted $\mathcal{P}\cong \mathcal{Q}$, if there exists an order-preserving bijection between their underlying sets whose inverse is also order-preserving.

Let $p_1,p_2\in\mathcal{P}$. If $p_1\preceq p_2$ and $p_1\neq p_2$, then we call $p_1\preceq p_2$ a \demph{strict relation} and write $p_1\prec p_2$. An element $p\in\mathcal{P}$ is called \demph{minimal} (resp., \demph{maximal}) if there exists no $q\in\mathcal{P}$ for which $q\prec p$ (resp., $p\prec q$). We denote by $\mathrm{Rel}(\mathcal{P})$ the set of strict relations between elements of $\mathcal{P}$, by $\mathrm{Min}(\mathcal{P})$ the set of minimal elements of $\mathcal{P}$, by $\mathrm{Max}(\mathcal{P})$ the set of maximal elements of $\mathcal{P}$, and by $\mathrm{Ext}(\mathcal{P})$ the set $\mathrm{Min}(\mathcal{P})\cup\mathrm{Max}(\mathcal{P})$. For examples of these collections, see Example~\ref{ex:posnot}. If $p_1\prec p_2$ and there does not exist $p\in \mathcal{P}$ satisfying $p_1\prec p\prec p_2$, then $p_1\prec p_2$ is a \demph{covering relation} and we write $p_1\precdot p_2$.  The \demph{Hasse diagram} of $\mathcal{P}$ is the graph whose vertices are the elements of $\mathcal{P}$ where if $p\prec q$ in $\mathcal{P}$ then the vertex $q$ is drawn above $p$ (i.e., with larger $y$ coordinate), and vertices $p$ and $q$ are connected by an edge when $p\precdot q$ (see, for example, Figure~\ref{fig:poset}). We refer to the maximal (with respect to containment) connected subposets of $\mathcal{P}$ as the \demph{connected components} of $\mathcal{P}$.  

\begin{ex}\label{ex:posnot}
Let $\mathcal{P}$ be the poset $\mathcal{P}=[6]$ with $1,2\prec 3\prec 4,5,6$. In Figure~\ref{fig:poset} (left) we illustrate the Hasse diagram of $\mathcal{P}$. We have $$\mathrm{Rel}(\mathcal{P})=\{1\prec 3,1\prec 4,1\prec 5,1\prec 6,2\prec 3,2\prec 4,2\prec 5,2\prec 6,3\prec 4,3\prec 5,3\prec 6\},\quad \mathrm{Min}(\mathcal{P})=\{1,2\},$$ $$\mathrm{Max}(\mathcal{P})=\{4,5,6\},\quad \text{and}\quad \mathrm{Ext}(\mathcal{P})=\{1,2,4,5,6\}.$$

\begin{figure}[H]
$$\begin{tikzpicture}[scale=0.8]
	\node [circle, draw = black, fill = black, inner sep = 0.5mm, label=below:{1}] (1) at (-0.5,0) {};
	\node (2) at (0, 1)[circle, draw = black, fill = black, inner sep = 0.5mm, label=left:{3}] {};
	\node [circle, draw = black, fill = black, inner sep = 0.5mm, label=above:{4}] (3) at (-1,2) {};
	\node [circle, draw = black, fill = black, inner sep = 0.5mm, label=above:{6}] (4) at (1,2) {};
    \draw (1)--(2);
    \draw (2)--(3);
    \draw (2)--(4);
\node (v1) at (0.5,0) [circle, draw = black, fill = black, inner sep = 0.5mm, label=below:{2}] {};
\draw (v1) -- (2);
\node [circle, draw = black, fill = black, inner sep = 0.5mm, label=above:{5}] (v2) at (0,2) {};
\draw (2) -- (v2);
\end{tikzpicture}\quad\quad\quad\quad \begin{tikzpicture}
    \node at (0,0) {$\begin{bmatrix}
    0 & 0 & \ast & \ast & \ast & \ast \\
    0 & 0 & \ast & \ast & \ast & \ast \\
    0 & 0 & 0 & \ast & \ast & \ast \\
    0 & 0 & 0 & 0 & 0 & 0 \\
    0 & 0 & 0 & 0 & 0 & 0 \\
    0 & 0 & 0 & 0 & 0 & 0 
\end{bmatrix}$};
\end{tikzpicture}$$
\caption{$\mathcal{P}=[6]$ with $1,2\prec3\prec4,5,6$ (left) and the nilpotent Lie poset algebra $\mathfrak{g}^{\prec}(\mathcal{P})$ (right)}\label{fig:poset}
\end{figure}
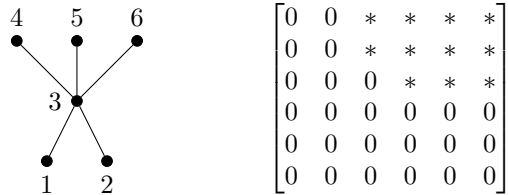
\end{ex}

Given a finite poset $\mathcal{P}$, the \demph{nilpotent Lie poset algebra} $\mathfrak{g}^{\prec}(\mathcal{P})$ is the subalgebra of $\mathfrak{gl}(|\mathcal{P}|,\mathbb{C})$ whose underlying vector space is spanned by the matrices $E_{p,q}$ for $p,q\in\mathcal{P}$ satisfying $p\prec q$. For an example, the nilpotent Lie poset algebra $\mathfrak{g}^{\prec}(\mathcal{P})$ for $\mathcal{P}$ the poset of Example~\ref{ex:posnot} consists of all matrices in $\mathfrak{gl}(6,\mathbb{C})$ of the form illustrated in Figure~\ref{fig:poset} (right) where $\ast$'s denote potential nonzero entries.

\begin{remark}
    The definition of nilpotent Lie poset given above defers slightly from that given in the papers \cite{breadthposet,indexnilposet,tBCD} where it is assumed that for all finite posets $(\mathcal{P},\preceq)$ the ordering $\preceq$ respects the natural ordering $\le$; that is, it is assumed that $a\preceq b$ in $\mathcal{P}$ implies that $a\le b$. Using a linear extension, it follows that every poset considered here is isomorphic to one whose ordering respects $\le$. Consequently, since isomorphic posets have isomorphic nilpotent Lie poset algebras, it follows that the results of \cite{indexnilposet} apply to the nilpotent Lie poset algebras considered here. 
\end{remark}

Moving forward, we find that certain posets and their corresponding nilpotent Lie poset algebras play an important role. For a composition $(a_1,a_2,\hdots,a_\ell)$ of $N$ with $S(\mathbf{a})=A_1\cup\cdots\cup A_n$, define $\mathcal{P}(a_1,a_2,\hdots,a_\ell)$ to be the poset on $[N]$ whose covering relations are given by $p\precdot q$ whenever $p\in A_j$ and $q\in A_{j+1}$ for $j\in [\ell-1]$. We denote $\mathfrak{g}^{\prec}(\mathcal{P}(a_1,a_2,\hdots,a_\ell))$ by $\mathfrak{g}^{\prec}(a_1,a_2,\hdots,a_\ell)$.

Now, as we will show in the following section, the nilradical of a (type-A) seaweed algebra $\mathfrak{s}$ is the direct sum of $\mathrm{Z}(\mathfrak{s})$ with a nilpotent Lie poset algebra. Thus, a poset-theoretic index formula for nilpotent Lie poset algebras found in \cite{indexnilposet} will play a crucial role in our work here. To state the formula, we require the following notation. If $\mathcal{P}$ is a finite poset and $p\in\mathcal{P}$, then $\mathrm{D}(\mathcal{P},p):=|\{q\in \mathcal{P}~|~q\prec p\}|$ and $\mathrm{U}(\mathcal{P},p):=|\{q\in \mathcal{P}~|~p\prec q\}|.$

\begin{theorem}[Theorem~4, \cite{indexnilposet}]\label{thm:indexnil}
    If $\mathcal{P}$ is a finite poset, then
$$\ind\mathfrak{g}^{\prec}(\mathcal{P})=|\mathrm{Rel}(\mathcal{P})|-2\sum_{p\in\mathcal{P}\backslash Ext(\mathcal{P})}\min(\mathrm{D}(\mathcal{P},p),\mathrm{U}(\mathcal{P},p)).$$ 
\end{theorem}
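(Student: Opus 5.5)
Since $\ind\mathfrak{g}=\dim\mathfrak{g}-\max_{F}\operatorname{rank}F([-,-])$, I would work with the commutator matrix $C$ of $\mathfrak{g}^{\prec}(\mathcal{P})$ in the basis $\{E_{p,q}~|~p\prec q\}$, which is indexed by $\mathrm{Rel}(\mathcal{P})$. Its entries are linear forms in the indeterminates $x_{a,b}:=F(E_{a,b})$. Then $\ind\mathfrak{g}^{\prec}(\mathcal{P})=|\mathrm{Rel}(\mathcal{P})|-\operatorname{rank}C$, with the rank taken over $\mathbb{C}(x_{a,b})$. So the statement is equivalent to showing that the generic rank of $C$ equals $2\sum_{p\notin\mathrm{Ext}(\mathcal{P})}\min(\mathrm{D}(\mathcal{P},p),\mathrm{U}(\mathcal{P},p))$.

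\textbf{Step 1 (decomposition by middle element).} The only nonzero brackets are $[E_{a,p},E_{p,b}]=E_{a,b}$ for $a\prec p\prec b$. Each nonzero entry of $C$ therefore has a unique ``middle'' element $p$, and this $p$ cannot be extremal. Hence $C=\sum_{p\notin\mathrm{Ext}(\mathcal{P})}M_p$, where $M_p$ is supported on the rows and columns indexed by $\{(a,p)~|~a\prec p\}\cup\{(p,b)~|~p\prec b\}$. Moreover $M_p$ has the skew block form $\begin{bmatrix}0&B_p\\-B_p^T&0\end{bmatrix}$, where $B_p$ is the $\mathrm{D}(\mathcal{P},p)\times\mathrm{U}(\mathcal{P},p)$ matrix with entries $(B_p)_{a,b}=x_{a,b}$. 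Therefore $\operatorname{rank}M_p=2\operatorname{rank}B_p\le 2\min(\mathrm{D}(\mathcal{P},p),\mathrm{U}(\mathcal{P},p))$. Subadditivity of rank then gives the inequality $\ind\ge$ the right-hand side.

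\textbf{Step 2 (achieving the bound).} This is the main obstacle. The matrices $M_p$ overlap: a row $(a,p)$ lies in both $M_p$ and $M_a$. So I must exhibit a nonvanishing minor of $C$ of size $2\sum\min(\mathrm{D},\mathrm{U})$.

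My plan is as follows.
\begin{enumerate}
\item For each non-extremal $p$, choose a matching $\mu_p$ of size $\min(\mathrm{D},\mathrm{U})$ between the down-relations $(a,p)$ and the up-relations $(p,b)$. The matched pairs give entries $\pm x_{a,b}$ of $C$ in positions $((a,p),(p,b))$ and $((p,b),(a,p))$.
\item Arrange that, across all $p$, the row index sets used are pairwise disjoint, and likewise the column index sets. A given relation $(a,p)$ could be needed both by $\mu_p$ (as a lower relation) and by $\mu_a$ (as an upper relation). To handle this, I would fix a linear extension of $\mathcal{P}$, process the middle elements in that order, and choose the matchings greedily. When $\mathrm{D}(p)\le\mathrm{U}(p)$, use all down-relations of $p$ and only up-relations to elements $b$ chosen as far up as possible, and dually when $\mathrm{U}(p)<\mathrm{D}(p)$. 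A counting check should show that no relation is used twice in the same role, because a relation $(a,p)$ is ``lower'' only for middle $p$ and ``upper'' only for middle $a$.
\item With disjoint supports secured, the selected positions form a partial permutation. Its monomial $\prod x_{a,b}^2$ appears in the corresponding minor with a nonzero coefficient.
\item To rule out cancellation, compare monomials via a term order on the $x_{a,b}$ induced by the linear extension. Weighting $x_{a,b}$ heavily when $b-a$ is large makes the chosen permutation the unique leading term.
\end{enumerate}

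If the greedy disjointness in item 2 turns out to be too delicate, my fallback is induction on $|\mathcal{P}|$. I would delete a maximal element $m$ and compare the ranks of $C$ for $\mathcal{P}$ and for $\mathcal{P}\setminus\{m\}$ via a Schur-complement argument on the rows $(p,m)$. The aim is to check that adding $m$ increases the rank by exactly twice the total increase of $\min(\mathrm{D},\mathrm{U})$ over the elements below $m$, plus nothing more.
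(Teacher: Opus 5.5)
The paper does not prove this statement; it quotes it from \cite{indexnilposet}, so your proposal has to stand on its own.

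Your Step 1 is correct. Each nonzero entry of $C$ has a unique middle element, each $M_p$ has the stated skew block form with $\operatorname{rank}M_p=2\operatorname{rank}B_p$, and subadditivity gives $\ind\mathfrak{g}^{\prec}(\mathcal{P})\ge|\mathrm{Rel}(\mathcal{P})|-2\sum\min(D_p,U_p)$. When no two non-extremal elements are comparable, the supports of the $M_p$ are pairwise disjoint and Step 1 already gives equality. So all the difficulty lies where supports overlap: for comparable non-extremal $p\prec q$, $M_p$ and $M_q$ share the index $(p,q)$. Step 2, which carries the entire reverse inequality, is a plan rather than a proof. Both of its load-bearing claims are unproved, so there is a genuine gap.

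\textbf{Disjointness.} Your ``counting check'' addresses the wrong issue. A relation can trivially never be used twice in the same role. What must be excluded is $(a,p)$ being used both by $\mu_p$ as a lower relation and by $\mu_a$ as an upper relation. This constraint has teeth:
\begin{itemize}
\item If $D_p\le U_p$, then $\mu_p$ is forced to use every $(a,p)$.
\item If $U_a\le D_a$, then $\mu_a$ is forced to use every $(a,b)$.
\end{itemize}
Forced uses never collide, since $a\prec p$ with $U_a\le D_a$ gives $U_p\le U_a-1\le D_a-1\le D_p-2$. The relations $(a,b)$ with $D_a<U_a$ and $U_b<D_b$, however, are contested. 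You need a Hall-type or orientation argument showing they can be distributed so that every $\mu$ receives its quota, and the greedy rule is never shown to achieve this. This step cannot be bypassed: a nonzero principal Pfaffian of order $2\sum\min(D_p,U_p)$ is exactly such a system of compatible matchings.

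\textbf{Non-cancellation.} Even with disjoint supports, $C|_S$ is not block diagonal. Moreover, $x_{a,b}$ occurs in $M_q$ for every $q$ strictly between $a$ and $b$, so other perfect matchings of $S$ contribute. Take the chain $1\prec2\prec3\prec4\prec5$. Your greedy rule gives $S=\{(1,2),(2,5),(1,3),(2,3),(3,4),(3,5),(1,4),(4,5)\}$.
\begin{itemize}
\item Cross entries such as $C_{(1,2),(2,3)}=x_{1,3}$ and $C_{(3,4),(4,5)}=x_{3,5}$ link different $S_p$.
\item One finds $\mathrm{Pf}(C|_S)=\pm x_{1,5}^2(x_{1,4}x_{2,5}-x_{1,5}x_{2,4})$.
\item Both terms have the same weight $14$ under $\mathrm{wt}(x_{a,b})=b-a$.
\end{itemize}
So ``weighting long relations heavily'' does not determine a unique leading term. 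What you actually need is a proof that the chosen monomial comes from exactly one perfect matching of $S$, or an explicit functional $F$ for which $C|_S$ can be evaluated directly.

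The Schur-complement fallback is only described, not carried out. As written, the proposal establishes only the inequality $\ind\mathfrak{g}^{\prec}(\mathcal{P})\ge|\mathrm{Rel}(\mathcal{P})|-2\sum_{p\notin\mathrm{Ext}(\mathcal{P})}\min(D_p,U_p)$.
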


\section{Nilradicals to nilpotent Lie poset algebras}\label{sec:posetalg}

In this section, we show that the nilradical of a (type-A) seaweed algebra $\mathfrak{s}$ is isomorphic to the direct sum of $\mathrm{Z}(\mathfrak{s})$ with a nilpotent Lie poset algebra. Moreover, we identify a choice of corresponding poset. To start, we establish the following result which characterizes $\mathrm{Z}(\mathfrak{s})$.

\begin{theorem}\label{thm:center}
    Let $\mathfrak{s}$ be a (type-A) seaweed algebra and $S\subseteq\mathfrak{s}$ consist of all elements of the form $\sum c_iE_{i,i}$ where $c_j=c_k$ when $v_j$ and $v_k$ belong to the same central component of $\mathrm{M}(\mathfrak{s})$. Then $\mathrm{Z}(\mathfrak{s})=S$.
\end{theorem}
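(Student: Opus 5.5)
We prove the two inclusions $S\subseteq\mathrm{Z}(\mathfrak{s})$ and $\mathrm{Z}(\mathfrak{s})\subseteq S$ directly from the matrix-unit basis of $\mathfrak{s}$, using Lemma~\ref{lem:cc}. The only computation needed is the commutator of a diagonal matrix with a matrix unit:
\[
\Bigl[\sum_k c_kE_{k,k},\,E_{i,j}\Bigr]=(c_i-c_j)E_{i,j}.
\]
(In the type-A case, note that an element $\sum c_iE_{i,i}$ lies in $\mathfrak{s}$ only if it has trace zero. So $S$ is understood as the trace-zero such elements, and the same argument applies.)

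For $S\subseteq\mathrm{Z}(\mathfrak{s})$, let $x=\sum c_kE_{k,k}\in S$. Since $x$ is diagonal, it commutes with every diagonal element of $\mathfrak{s}$. It therefore suffices to check that $x$ commutes with each off-diagonal basis element $E_{i,j}\in\mathfrak{s}$. By Lemma~\ref{lem:cc}(a), $v_i$ and $v_j$ lie in a common central component, so $c_i=c_j$. The displayed formula then gives $[x,E_{i,j}]=0$.

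For $\mathrm{Z}(\mathfrak{s})\subseteq S$, take $z\in\mathrm{Z}(\mathfrak{s})$ and write $z=d+\sum_{(p,q)} z_{p,q}E_{p,q}$, where $d=\sum c_kE_{k,k}$ is the diagonal part and the sum runs over off-diagonal pairs with $E_{p,q}\in\mathfrak{s}$.

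First we show the off-diagonal part vanishes. Choose a diagonal element $h\in\mathfrak{s}$ with distinct diagonal entries; in the type-A case we additionally require trace zero, which is still possible. Then
\[
[h,z]=\sum (h_p-h_q)z_{p,q}E_{p,q},
\]
and $[h,z]=0$ forces every $z_{p,q}=0$. Hence $z=d$ is diagonal.

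Next we show $d$ is constant on central components. Suppose $v_i$ and $v_j$ with $i<j$ lie in the same central component. For each $k$ with $i\le k<j$, Lemma~\ref{lem:cc}(b) gives that $E_{k,k+1}$ or $E_{k+1,k}$ lies in $\mathfrak{s}$. Commuting $z$ with that element yields $\pm(c_k-c_{k+1})E=0$, so $c_k=c_{k+1}$. Chaining these equalities from $k=i$ to $k=j-1$ gives $c_i=c_j$, so $z\in S$.

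The main point needing care is the type-A bookkeeping: we must make sure a trace-zero diagonal element with distinct entries exists in $\mathfrak{s}$, and that $S$ is interpreted inside $\mathfrak{s}$, i.e. with the trace condition. Both are routine. Everything else is a direct application of Lemma~\ref{lem:cc}.
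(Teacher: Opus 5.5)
Your proof is correct and follows essentially the same route as the paper: both inclusions are checked on the matrix-unit basis, the off-diagonal part of a central element is killed by commuting with diagonal elements (the paper uses all $E_{i,i}-E_{j,j}$ rather than your single element with distinct entries), and Lemma~\ref{lem:cc}(a) gives $S\subseteq\mathrm{Z}(\mathfrak{s})$. In the reverse inclusion you chain $c_k=c_{k+1}$ via Lemma~\ref{lem:cc}(b), which is the step actually needed there; the paper's write-up cites only part (a) at that point, so your version is slightly more complete.
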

\begin{proof}
    Take $x=\sum c_{i,j}E_{i,j}\in\mathrm{Z}(\mathfrak{s})$. Since $$[E_{i,i}-E_{j,j},x]=\sum_{i\neq k} c_{i,k}E_{i,k}+\sum_{j\neq k} c_{k,j}E_{k,j}-\sum_{i\neq k} c_{k,i}E_{k,i}-\sum_{j\neq k} c_{j,k}E_{j,k}=0,$$ it follows that $x=\sum c_iE_{i,i}$. Next, take $E_{i,j}\in\mathfrak{s}$ so that $v_i$ and $v_j$ belong to the same central component of $\mathrm{M}(\mathfrak{s})$ by Lemma~\ref{lem:cc} (a). Then $[x,E_{i,j}]=(c_i-c_j)E_{i,j}=0$, i.e., $c_i=c_j$. Thus, it follows that $x\in S$ and $\mathrm{Z}(\mathfrak{s})\subseteq S$. Now, take $x=\sum c_iE_{i,i}\in S$. Evidently, $[x,E_{i,i}]=0$. If $E_{i,j}\in\mathfrak{s}$, then by Lemma~\ref{lem:cc} (a) we have that $v_i$ and $v_j$ belong to the same central component of $\mathrm{M}(\mathfrak{s})$. Consequently, $[x,E_{i,j}]=(c_i-c_j)E_{i,j}=0$. Therefore, $x\in \mathrm{Z}(\mathfrak{s})$ and $S\subseteq\mathrm{Z}(\mathfrak{s})$. The result follows.
\end{proof}

Next, the following theorem characterizes the nilradical of a (type-A) seaweed algebra.

\begin{theorem}\label{thm:nilrad}
    If $\mathfrak{s}$ is a (type-A) seaweed algebra, then $$\mathfrak{n}(\mathfrak{s})=\mathrm{span}\{E_{p,q}\in\mathfrak{s}~|~E_{q,p}\notin\mathfrak{s}\}\oplus\mathrm{Z}(\mathfrak{s}).$$
\end{theorem}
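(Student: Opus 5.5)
Let $\mathfrak{m}:=\mathrm{span}\{E_{p,q}\in\mathfrak{s}~|~E_{q,p}\notin\mathfrak{s}\}$ and $\mathfrak{k}:=\mathfrak{m}\oplus\mathrm{Z}(\mathfrak{s})$. The sum is direct because $\mathfrak{m}$ is spanned by off-diagonal matrix units, while $\mathrm{Z}(\mathfrak{s})$ is diagonal by Theorem~\ref{thm:center}. I will prove the two inclusions $\mathfrak{k}\subseteq\mathfrak{n}(\mathfrak{s})$ and $\mathfrak{n}(\mathfrak{s})\subseteq\mathfrak{k}$.

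\textbf{$\mathfrak{k}$ is a nilpotent ideal.} For the ideal property, the diagonal part of $\mathfrak{s}$ preserves $\mathfrak{m}$ and kills $\mathrm{Z}(\mathfrak{s})$. So it suffices to check $[E_{i,j},E_{p,q}]\in\mathfrak{m}$ whenever $E_{i,j}\in\mathfrak{s}$ and $E_{p,q}\in\mathfrak{m}$. The bracket is $\delta_{j,p}E_{i,q}-\delta_{q,i}E_{p,j}$, so consider the case $j=p$ and the term $E_{i,q}$ with $i\neq q$; the other term is symmetric.
\begin{itemize}
\item Since $\mathfrak{s}$ is a subalgebra, $E_{i,q}\in\mathfrak{s}$.
\item Suppose $E_{q,i}\in\mathfrak{s}$. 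Then $[E_{q,i},E_{i,p}]=E_{q,p}$ lies in $\mathfrak{s}$ (using $E_{i,p}=E_{i,j}\in\mathfrak{s}$), contradicting $E_{p,q}\in\mathfrak{m}$.
\item If $i=q$, the bracket contributes diagonal terms $E_{p,p}-E_{q,q}$. This would need $E_{q,p}\in\mathfrak{s}$, which is excluded, so this case does not occur.
\end{itemize}
Hence $\mathfrak{k}$ is an ideal.

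For nilpotency, $\mathrm{Z}(\mathfrak{s})$ is central, so it is enough to show $\mathfrak{m}$ is nilpotent. I will describe $\mathfrak{m}$ via blocks. Define $p\sim q$ if $E_{p,q},E_{q,p}\in\mathfrak{s}$; this is an equivalence relation (use commutators again), and its classes are the intersections $A_i\cap B_j$. Define $[p]<[q]$ if $E_{p,q}\in\mathfrak{m}$. This relation is well-defined on classes and transitive by the same commutator argument. It is also acyclic: a cycle would put $E_{p,q}$ and $E_{q,p}$ both in $\mathfrak{s}$. So $\mathfrak{m}$ embeds in the strictly block upper triangular matrices for a linear extension of $<$, and is therefore nilpotent.

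\textbf{Maximality.} Let $\mathfrak{n}=\mathfrak{n}(\mathfrak{s})$, which is $\mathrm{ad}$-stable under the diagonal torus $\mathfrak{h}\subseteq\mathfrak{s}$. Hence $\mathfrak{n}$ is spanned by its diagonal part and by the matrix units it contains.
\begin{itemize}
\item \emph{No matrix unit $E_{p,q}$ with $E_{q,p}\in\mathfrak{s}$ lies in $\mathfrak{n}$.} Otherwise $[E_{q,p},E_{p,q}]=E_{q,q}-E_{p,p}\in\mathfrak{n}$. Bracketing again with $E_{p,q}$ gives $2E_{p,q}$, so $\mathrm{ad}$ of that diagonal element on $\mathfrak{n}$ is not nilpotent. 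This contradicts Engel's theorem applied to $\mathfrak{n}$.
\item \emph{The diagonal part of any $x\in\mathfrak{n}$ lies in $\mathrm{Z}(\mathfrak{s})$.} Write $h=\sum c_iE_{i,i}$ for the diagonal projection of $x\in\mathfrak{n}$. By $\mathfrak{h}$-stability (or because the off-diagonal units of $\mathfrak{n}$ already lie in $\mathfrak{n}$), we get $h\in\mathfrak{n}$. For $E_{i,j}\in\mathfrak{s}$, the bracket $[h,E_{i,j}]=(c_i-c_j)E_{i,j}$ lies in $\mathfrak{n}$. Since $\mathfrak{n}$ is nilpotent, $\mathrm{ad}\,h$ restricted to $\mathfrak{n}$ is nilpotent. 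If $E_{i,j}\in\mathfrak{n}$, this forces $c_i=c_j$. If $E_{i,j}\notin\mathfrak{n}$, then $(c_i-c_j)E_{i,j}\in\mathfrak{n}$ still forces $c_i=c_j$.
\end{itemize}
Thus $c_i=c_j$ whenever $E_{i,j}\in\mathfrak{s}$. Using Lemma~\ref{lem:cc}(b), chain adjacent indices within a central component to conclude that $h$ is constant on central components, so $h\in\mathrm{Z}(\mathfrak{s})$ by Theorem~\ref{thm:center}. Therefore $\mathfrak{n}\subseteq\mathfrak{k}$.

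In the type-A case the same argument works. The torus is now the trace-zero diagonals, which still separate all roots $\epsilon_i-\epsilon_j$, and the center is the trace-zero part of $S$.

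The step I expect to be the main obstacle is the careful treatment of diagonal parts: justifying that $\mathfrak{n}$ is spanned by its diagonal and root parts, and showing the diagonal part is central.
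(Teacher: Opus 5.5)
Your proposal is correct and follows essentially the same route as the paper. You show $\mathfrak{k}$ is an ideal by the same commutator argument and that it is nilpotent by ordering the classes $A_i\cap B_j$; the paper equivalently reads off a poset $\mathcal{P}_\mathfrak{s}$ on $[N]$. For the reverse inclusion you use the same Engel-type contradictions together with Lemma~\ref{lem:cc}(b) and Theorem~\ref{thm:center}. The main difference is presentation: you isolate the root components of $\mathfrak{n}(\mathfrak{s})$ through the weight-space decomposition under the (trace-zero) diagonal torus, while the paper does this by hand with explicit double brackets against $E_{p,p}-\frac{1}{N-1}\sum_{i\neq p}E_{i,i}$.
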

\begin{proof}
    Assume that either $\mathfrak{s}=\mathfrak{p}\frac{a_1|\cdots|a_n}{b_1|\cdots|b_m}$ or $\mathfrak{p}^A\frac{a_1|\cdots|a_n}{b_1|\cdots|b_m}$ with $N=\sum_{i=1}^na_i=\sum_{i=1}^mb_i$ and let $$\mathfrak{k}=\mathrm{span}\{E_{p,q}\in\mathfrak{s}~|~E_{q,p}\notin\mathfrak{s}\}\oplus\mathrm{Z}(\mathfrak{s}).$$ To begin, we will establish that $\mathfrak{n}(\mathfrak{s})$ is contained in $\mathfrak{k}$. Towards this end, if $E_{p,q},E_{q,p}\in\mathfrak{s}$ and $x=\sum c_{i,j}E_{i,j}\in \mathfrak{n}(\mathfrak{s})$, then we claim that $c_{p,q}=c_{q,p}=0$. To see this, note that 
    \begin{multline*}
        \left[E_{q,q}-\frac{1}{N-1}\sum_{i\neq q}E_{i,i},\left[E_{p,p}-\frac{1}{N-1}\sum_{i\neq p}E_{i,i},x\right]\right]\\
        =-c_{p,q}\left(1+\frac{1}{N-1}\right)^2E_{p,q}-c_{q,p}\left(1+\frac{1}{N-1}\right)^2E_{q,p}\in \mathfrak{n}(\mathfrak{s})
    \end{multline*} so that $c_{p,q}E_{p,q}+c_{q,p}E_{q,p}\in \mathfrak{n}(\mathfrak{s})$. Assume for the sake of contradiction that $c_{q,p}\neq 0$, the argument for $c_{p,q}\neq 0$ being similar. Then, $$[E_{p,q},c_{p,q}E_{p,q}+c_{q,p}E_{q,p}]=c_{q,p}(E_{p,p}-E_{q,q})\in \mathfrak{n}(\mathfrak{s})$$ so that $E_{p,p}-E_{q,q}\in \mathfrak{n}(\mathfrak{s})$. Thus, since $E_{p,p}-E_{q,q},c_{p,q}E_{p,q}+c_{q,p}E_{q,p}\in \mathfrak{n}(\mathfrak{s})$ with $c_{q,p}\neq 0$ and $$\mathrm{ad}^n_{E_{p,p}-E_{q,q}}(c_{p,q}E_{p,q}+c_{q,p}E_{q,p})=2^n(c_{p,q}E_{p,q}+(-1)^nc_{q,p}E_{q,p})\neq 0,$$ it follows that $\mathrm{ad}_{E_{p,p}-E_{q,q}}$ is not a nilpotent endomorphism of $\mathfrak{n}(\mathfrak{s})$ so that, applying Engel's Theorem, $\mathfrak{n}(\mathfrak{s})$ is not nilpotent, which is a contradiction. Consequently, if $E_{p,q},E_{q,p}\in\mathfrak{s}$ and $x=\sum c_{i,j}E_{i,j}\in \mathfrak{n}(\mathfrak{s})$, then $c_{p,q}=c_{q,p}=0$, as claimed. Now, in light of Theorem~\ref{thm:center}, to establish that $\mathfrak{n}(\mathfrak{s})$ is contained in $\mathfrak{k}$, it remains to show that if $x=\sum c_{i,j}E_{i,j}\in\mathfrak{n}(\mathfrak{s})$, then $c_{p,p}=c_{q,q}$ for all $p,q\in [N]$ such that the vertices $v_p$ and $v_q$ belong to the same central component of $\mathrm{M}(\mathfrak{s})$. Assume for a contradiction that $x=\sum c_{i,j}E_{i,j}\in\mathfrak{n}(\mathfrak{s})$ with $c_{p,p}\neq c_{q,q}$ for some $p<q\in [N]$ with $v_p$ and $v_q$ belonging to the same central component of $\mathrm{M}(\mathfrak{s})$. Note that we may assume that $q=p+1$ as, under our assumption, such a pair must exist. Moreover, note that if $E_{r,t}\in \mathfrak{s}$, $E_{t,r}\notin \mathfrak{s}$, and $c_{r,t}\neq 0$, then $$\left[E_{t,t}-\frac{1}{N-1}\sum_{i\neq t}E_{i,i},\left[E_{r,r}-\frac{1}{N-1}\sum_{i\neq r}E_{i,i},x\right]\right]=-c_{r,t}\left(1+\frac{1}{N-1}\right)^2E_{r,t}\in \mathfrak{n}(\mathfrak{s})$$ so that $E_{r,t}\in \mathfrak{n}(\mathfrak{s})$ and $0\neq x_d=\sum c_{i,i}E_{i,i}=x-\sum_{i\neq j}c_{i,j}E_{i,j}\in\mathfrak{n}(\mathfrak{s})$. Applying Lemma~\ref{lem:cc} (b), since $v_p$ and $v_{p+1}$ belong to the same central component of $\mathrm{M}(\mathfrak{s})$, we have that either $E_{p,p+1}\in\mathfrak{s}$ or $E_{p+1,p}\in\mathfrak{s}$. Without loss of generality, assume that $y=E_{p,p+1}\in\mathfrak{s}$. Then we have that $[x_d,y]=(c_{p,p}-c_{p+1,p+1})y\in\mathfrak{n}(\mathfrak{s})$, i.e., $y\in \mathfrak{n}(\mathfrak{s})$, and $$\mathrm{ad}_{x_d}^n(y)=(c_{p,p}-c_{p+1,p+1})^ny\in \mathfrak{n}(\mathfrak{s})$$ for all $n\in\mathbb{Z}_{>0}$ with $c_{p,p}-c_{p+1,p+1}\neq 0$; but then, arguing as above using Engel's Theorem, $\mathfrak{n}(\mathfrak{s})$ is not nilpotent, which is a contradiction. Therefore, we have shown that $\mathfrak{n}(\mathfrak{s})$ is contained in $\mathfrak{k}$.

    Having established that $\mathfrak{n}(\mathfrak{s})$ is contained in $\mathfrak{k}$, to finish the proof, it suffices to show that $$\mathfrak{n}=\mathrm{span}\{E_{p,q}\in\mathfrak{s}~|~E_{q,p}\notin\mathfrak{s}\}$$ forms a nilpotent ideal of $\mathfrak{s}$. To see that $\mathfrak{n}$ forms an ideal of $\mathfrak{s}$, take $E_{x,y},E_{y,z}\in\mathfrak{s}$ with $E_{x,y}\in\mathfrak{n}$ or $E_{y,z}\in\mathfrak{n}$. If $[E_{x,y},E_{y,z}]=E_{x,z}\notin\mathfrak{n}$, then $E_{z,x}\in\mathfrak{s}$ so that $[E_{z,x},E_{x,y}]=E_{z,y}\in\mathfrak{s}$ and $[E_{y,z},E_{z,x}]=E_{y,x}\in\mathfrak{s}$, contradicting our assumption that $E_{x,y}\in\mathfrak{n}$ or $E_{y,z}\in\mathfrak{n}$. Thus, $E_{x,z}\in\mathfrak{n}$ and it follows that $\mathfrak{n}$ is an ideal of $\mathfrak{s}$. Now, to see that $\mathfrak{n}$ forms a nilpotent ideal, we show that it is a nilpotent Lie poset algebra. Let $\mathcal{P}_\mathfrak{s}=[N]$ and $\prec_\mathfrak{s}$ be the relation on $\mathcal{P}_\mathfrak{s}=[N]$ defined by $x\prec_\mathfrak{s}y$ for $x,y\in [N]$ if and only if $E_{x,y}\in\mathfrak{n}$. Since $\mathfrak{n}$ is an ideal of $\mathfrak{s}$, we have that $E_{x,y},E_{y,z}\in\mathfrak{n}$ implies $[E_{x,y},E_{y,z}]=E_{x,z}\in\mathfrak{n}$. Consequently, the relation $\prec_\mathfrak{s}$ is transitive. Moreover, since $E_{x,y}\in\mathfrak{n}$ implies that $E_{y,x}\notin\mathfrak{n}$, it follows that $\prec_\mathfrak{s}$ is antisymmetric. Therefore, $(\mathcal{P}_\mathfrak{s},\preceq_\mathfrak{s})$ is a poset and $\mathfrak{n}=\mathfrak{g}^{\prec}(\mathcal{P}_\mathfrak{s})$. The result follows.
\end{proof}

\begin{ex}
    For $\mathfrak{s}=\mathfrak{p}\frac{2|3|1|2|2}{7|3}$ and $\mathfrak{p}^A\frac{2|3|1|2|2}{7|3}$, in Figure~\ref{fig:nilrad} (left) we illustrate the elements of $\mathfrak{s}$ where $\ast$'s denote potential nonzero entries. Considering Theorem's~\ref{thm:center} and~\ref{thm:nilrad}, the nilradical $\mathfrak{n}(\mathfrak{s})=\mathrm{Z}(\mathfrak{s})\oplus\mathfrak{g}^\prec(\mathfrak{s})$ is contained within the span of the elements along the diagonal together with those corresponding to $\ast$'s lying outside of the boxed regions. More specifically, $\mathrm{Z}(\mathfrak{s})$ is spanned by the diagonal elements which have fixed coefficient within each boxed region, while $\mathfrak{g}^\prec(\mathfrak{s})$ is spanned by the elements corresponding to $\ast$'s lying outside of the boxed regions. In Figure~\ref{fig:nilrad} (right) we illustrate $\mathcal{P}_\mathfrak{s}$.
    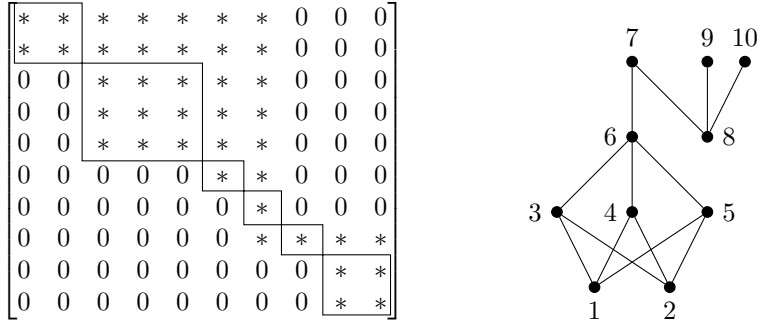
\begin{figure}[h]
        $$\begin{tikzpicture}
            \def\Node{\node [circle, draw = black, inner sep=2pt]}
     \node (1) at (0,0) {$\begin{bmatrix}
         \ast & \ast & \ast & \ast & \ast & \ast & \ast & 0 & 0 & 0 \\
         \ast & \ast & \ast & \ast & \ast & \ast & \ast & 0 & 0 & 0 \\
         0 & 0 & \ast & \ast & \ast & \ast & \ast & 0 & 0 & 0 \\
         0 & 0 & \ast & \ast & \ast & \ast & \ast & 0 & 0 & 0 \\
         0 & 0 & \ast & \ast & \ast & \ast & \ast & 0 & 0 & 0 \\
         0 & 0 & 0 & 0 & 0 & \ast & \ast & 0 & 0 & 0 \\
         0 & 0 & 0 & 0 & 0 & 0 & \ast & 0 & 0 & 0 \\
         0 & 0 & 0 & 0 & 0 & 0 & \ast & \ast & \ast & \ast \\
         0 & 0 & 0 & 0 & 0 & 0 & 0 & 0 & \ast & \ast \\
         0 & 0 & 0 & 0 & 0 & 0 & 0 & 0 & \ast & \ast
     \end{bmatrix}$};
     \draw (0,0)--(0,1.3)--(-2.5,1.3)--(-2.5,2.1)--(-1.6,2.1)--(-1.6,0)--(0.55,0)--(0.55,-0.4)--(1.05,-0.4)--(1.05,-0.85)--(1.6,-0.85)--(1.6,-1.25)--(2.5,-1.25)--(2.5,-2.05)--(1.6,-2.05)--(1.6,-1.25)--(1.05,-1.25)--(1.05,-0.85)--(0.55,-0.85)--(0.55,-0.4)--(0,-0.4)--(0,0);
    \end{tikzpicture}\quad\quad\quad\quad \begin{tikzpicture}
	\node (1) at (0, 0) [circle, draw = black, fill=black, inner sep = 0.5mm, label=below:{$1$}] {};
 \node (2) at (1, 0) [circle, draw = black, fill=black, inner sep = 0.5mm, label=below:{$2$}] {};
 \node (3) at (-0.5, 1) [circle, draw = black, fill=black, inner sep = 0.5mm, label=left:{$3$}] {};
 \node (4) at (0.5, 1) [circle, draw = black, fill=black, inner sep = 0.5mm, label=left:{$4$}] {};
 \node (5) at (1.5, 1) [circle, draw = black, fill=black, inner sep = 0.5mm, label=right:{$5$}] {};
 \node (6) at (0.5, 2) [circle, draw = black, fill=black, inner sep = 0.5mm, label=left:{$6$}] {};
 \node (7) at (0.5,3) [circle, draw = black, fill=black, inner sep = 0.5mm, label=above:{$7$}] {};
 \node (8) at (1.5, 2) [circle, draw = black, fill=black, inner sep = 0.5mm, label=right:{$8$}] {};
 \node (9) at (1.5, 3) [circle, draw = black, fill=black, inner sep = 0.5mm, label=above:{$9$}] {};
 \node (10) at (2, 3) [circle, draw = black, fill=black, inner sep = 0.5mm, label=above:{$10$}] {};
 \draw (1)--(3)--(6)--(4)--(2)--(3);
 \draw (1)--(5)--(6);
 \draw (1)--(4);
 \draw (2)--(5);
 \draw (6)--(7)--(8)--(9);
 \draw (8)--(10);
\end{tikzpicture}$$
        \caption{$\mathfrak{n}(\mathfrak{s})$ and $\mathcal{P}_\mathfrak{s}$ for $\mathfrak{s}=\mathfrak{p}\frac{2|3|1|2|2}{7|3}$ and $\mathfrak{p}^A\frac{2|3|1|2|2}{7|3}$}\label{fig:nilrad}
    \end{figure}
\end{ex}

Given a (type-A) seaweed algebra $\mathfrak{s}$, we now show how the poset $\mathcal{P}_\mathfrak{s}$ defined in the proof of Theorem~\ref{thm:nilrad} for which $\mathfrak{n}(\mathfrak{s})=\mathfrak{g}^{\prec}(\mathcal{P}_{\mathfrak{s}})\oplus\mathrm{Z}(\mathfrak{s})$ can be determined from an associated ``meander". Let $\mathfrak{s}=\mathfrak{p}\frac{a_1|\cdots|a_n}{b_1|\cdots|b_m}$ or $\mathfrak{p}^A\frac{a_1|\cdots|a_n}{b_1|\cdots|b_m}$, $$N=\sum_{i=1}^na_i=\sum_{i=1}^mb_i,\quad \mathbf{a}=(a_1,\hdots,a_n),\quad \mathbf{b}=(b_1,\hdots,b_m),\quad S(\mathbf{a})=A_1\cup A_2\cup\cdots\cup A_n,$$ and $$S(\mathbf{b})=B_1\cup B_2\cup\cdots\cup B_m.$$ Moreover, define $$C(\mathfrak{s}):=C_1\cup\cdots\cup C_\ell$$ to be the ordered set partition formed from the sets $A_i\cap B_j$ for $i\in [n]$ and $j\in [m]$, where $\max C_k=1+\min C_{k+1}$ for $k\in[\ell-1]$. Considering our definitions of $\mathfrak{p}\frac{\mathbf{a}}{\mathbf{b}}$ and $\mathfrak{p}^A\frac{\mathbf{a}}{\mathbf{b}}$ given in Section~\ref{sec:prelimsw}, it follows that $E_{p,q}\in \mathfrak{s}$ for $p>q$ if and only if there exists $i\in [n]$ for which $p,q\in A_i$. Similarly, $E_{p,q}\in \mathfrak{s}$ for $p<q$ if and only if there exists $i\in [m]$ for which $p,q\in B_i$. Consequently, considering Theorem~\ref{thm:nilrad}, for $E_{p,q}\in\mathfrak{s}$, we have $E_{p,q}\notin \mathfrak{n}(\mathfrak{s})$ if and only if there exists $i\in [\ell]$ for which $p,q\in C_i$.

Now, for the elements of $\mathfrak{n}(\mathfrak{s})$, take $p\in [N]$. Assume that $p\in C_{i_p}$. Moreover, assume that $j_p\in [n]$ and $k_p\in [m]$ are the unique elements for which $C_{i_p}\subseteq A_{j_p},B_{k_p}$. Then we have that $E_{p,q}\in\mathfrak{n}(\mathfrak{s})$ if and only if $q\in B_{k_p}\backslash C_{i_p}$ with $p<q$ or $q\in A_{j_p}\backslash C_{i_p}$ with $p>q$. Similarly, $E_{q,p}\in\mathfrak{n}(\mathfrak{s})$ if and only if $q\in A_{j_p}\backslash C_{i_p}$ with $p<q$ or $q\in B_{k_p}\backslash C_{i_p}$ with $p>q$. We can organize this information nicely using a modified version of the meander construction discussed in Section~\ref{sec:prelim}. For the modified meander, denoted $\mathrm{M}_\mathfrak{p}(\mathfrak{s})$, we proceed as follows.
\begin{itemize}
    \item[$(1)$] Write the numbers 1 through $N$ in increasing order from left to right along a horizontal line $L$.
    \item[$(2)$] For $1\le j\le n$ draw a vertical line between the numbers $\sum_{i=1}^ja_i$ and $1+\sum_{i=1}^ja_i$ which lies above $L$.
    \item[$(3)$] Similarly, for $1\le j\le m$ draw a vertical line between the numbers $\sum_{i=1}^jb_i$ and $1+\sum_{i=1}^jb_i$ which lies below $L$.
    \item[$(4)$] Circle each collection of numbers which lie between consecutive vertical lines (such lines do not have to both lie above or below $L$). Note that the collections of circled numbers correspond to the $C_i$ for $i\in [\ell]$. Consequently, we will refer to the collection of circled numbers corresponding to $C_i$ in $\mathrm{M}_\mathfrak{p}(\mathfrak{s})$ as $C_i$ for $i\in [\ell]$. Context will make clear whether we are referring to the set or the circled collection of numbers in $\mathrm{M}_\mathfrak{p}(\mathfrak{s})$.
    \item[$(5)$] Finally, for $i\in[\ell-1]$, if the collections of circled numbers $C_i$ and $C_{i+1}$ are separated by a single vertical line which lies above $L$, then draw a directed edge below from $C_i$ to $C_{i+1}$; if the collections are separated by a single vertical line which lies below $L$, then draw a directed edge above from $C_{i+1}$ to $C_{i}$; and finally, if the collections are separated by two vertical lines, one lying above $L$ and the other below, then draw no edges between the collections.
\end{itemize}
Writing $p\rightarrow_{\mathfrak{s}} q$ if there is a directed path from the circled collection containing $p$ in $\mathrm{M}_\mathfrak{p}(\mathfrak{s})$ to that containing $q$, considering our discussion above, it is straightforward to verify that $E_{p,q}\in\mathfrak{n}(\mathfrak{s})$ if and only if $p\rightarrow_{\mathfrak{s}} q$. Thus, $\mathcal{P}_\mathfrak{s}$ is the poset whose partial order is given by $\prec_\mathfrak{s}=\rightarrow_{\mathfrak{s}}$. Having discussed the meander construction of $\mathcal{P}_\mathfrak{s}$, from the proof of Theorem~\ref{thm:nilrad} we now state the following.

\begin{theorem}\label{thm:nilposet}
    If $\mathfrak{s}$ is a (type-A) seaweed algebra, then $\mathfrak{n}(\mathfrak{s})\cong \mathfrak{g}^\prec(\mathcal{P}_\mathfrak{s})\oplus\mathrm{Z}(\mathfrak{s})$.
\end{theorem}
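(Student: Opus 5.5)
The plan is to read Theorem~\ref{thm:nilposet} off the proof of Theorem~\ref{thm:nilrad}, which gives the internal vector space decomposition
$$\mathfrak{n}(\mathfrak{s})=\mathfrak{n}\oplus\mathrm{Z}(\mathfrak{s}),\qquad \mathfrak{n}=\mathrm{span}\{E_{p,q}\in\mathfrak{s}~|~E_{q,p}\notin\mathfrak{s}\}.$$
Two things remain. First, this sum must be a direct sum of Lie algebras. Second, $\mathfrak{n}$ must equal $\mathfrak{g}^{\prec}(\mathcal{P}_\mathfrak{s})$, where $\mathcal{P}_\mathfrak{s}$ is the poset read off from $\mathrm{M}_\mathfrak{p}(\mathfrak{s})$ via $\prec_\mathfrak{s}=\rightarrow_\mathfrak{s}$.

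For the Lie algebra structure, note that $\mathrm{Z}(\mathfrak{s})$ is central in $\mathfrak{s}$, hence in $\mathfrak{n}(\mathfrak{s})$. So $[\mathfrak{n},\mathrm{Z}(\mathfrak{s})]=0$ and $[\mathrm{Z}(\mathfrak{s}),\mathrm{Z}(\mathfrak{s})]=0$. The proof of Theorem~\ref{thm:nilrad} shows that $\mathfrak{n}$ is an ideal of $\mathfrak{s}$, so in particular $[\mathfrak{n},\mathfrak{n}]\subseteq\mathfrak{n}$. The sum is direct because $\mathfrak{n}$ is spanned by off-diagonal matrix units, while $\mathrm{Z}(\mathfrak{s})$ consists of diagonal matrices by Theorem~\ref{thm:center}. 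Hence the map $(x,z)\mapsto x+z$ from the external direct sum $\mathfrak{n}\oplus\mathrm{Z}(\mathfrak{s})$ to $\mathfrak{n}(\mathfrak{s})$ is a Lie algebra isomorphism.

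For the identification $\mathfrak{n}=\mathfrak{g}^{\prec}(\mathcal{P}_\mathfrak{s})$, the proof of Theorem~\ref{thm:nilrad} defines $x\prec_\mathfrak{s}y$ iff $E_{x,y}\in\mathfrak{n}$ and checks that this is a strict partial order. So $\mathfrak{n}$ is by definition the span of the $E_{x,y}$ with $x\prec_\mathfrak{s}y$, which is $\mathfrak{g}^{\prec}(\mathcal{P}_\mathfrak{s})$. What remains is the assertion that $E_{p,q}\in\mathfrak{n}$ iff $p\rightarrow_\mathfrak{s}q$. I would verify this directly, and I expect it to be the only step with real content.

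Fix $p\in C_i$, with $C_i\subseteq A_j\cap B_k$.

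Take $q>p$. Then $E_{p,q}\in\mathfrak{n}$ iff $q\in B_k\setminus C_i$. Moving rightward from $C_i$, the blocks $C_{i+1},C_{i+2},\dots$ stay in $B_k$ exactly as long as the separating lines all lie above $L$. Lines above $L$ are exactly the ones that produce rightward directed edges $C_t\to C_{t+1}$ in step (5). A line below $L$, or a double line, leaves $B_k$ and breaks the directed path. So $q\in B_k\setminus C_i$ iff there is a rightward directed path from $C_i$ to the block containing $q$.

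Take $q<p$. Then $E_{p,q}\in\mathfrak{n}$ iff $q\in A_j\setminus C_i$. Moving leftward, the blocks stay in $A_j$ exactly as long as the separating lines lie below $L$. Those lines give the edges $C_{t+1}\to C_t$, so again membership in $A_j\setminus C_i$ matches a leftward directed path.

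Finally, directed paths in $\mathrm{M}_\mathfrak{p}(\mathfrak{s})$ are monotone. Each separator carries edges in only one direction, so a path cannot turn around. Every path is therefore one of the two cases above, and the equivalence holds. Combining the two parts gives $\mathfrak{n}(\mathfrak{s})\cong\mathfrak{g}^\prec(\mathcal{P}_\mathfrak{s})\oplus\mathrm{Z}(\mathfrak{s})$.
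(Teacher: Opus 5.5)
Your proposal is correct and follows the paper's route. The paper also reads the theorem off the proof of Theorem~\ref{thm:nilrad}: there $\mathfrak{n}=\mathrm{span}\{E_{p,q}\in\mathfrak{s}~|~E_{q,p}\notin\mathfrak{s}\}$ is shown to be $\mathfrak{g}^{\prec}(\mathcal{P}_\mathfrak{s})$, and $\prec_\mathfrak{s}$ is then identified with $\rightarrow_\mathfrak{s}$ via $\mathrm{M}_\mathfrak{p}(\mathfrak{s})$. You add two details the paper leaves implicit: the sum is a Lie algebra direct sum because $\mathrm{Z}(\mathfrak{s})$ is central and meets $\mathfrak{n}$ trivially, and the block-by-block check, with monotonicity of directed paths, behind the paper's ``straightforward to verify'' claim that $E_{p,q}\in\mathfrak{n}$ if and only if $p\rightarrow_\mathfrak{s}q$.
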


Considering the definition of $\mathcal{P}_\mathfrak{s}$ in terms of $\mathrm{M}_\mathfrak{p}(\mathfrak{s})$ given above, the following is immediate.

\begin{corollary}\label{cor:parbposet}
    If $\mathfrak{s}=\mathfrak{p}\frac{a_1|\cdots|a_m}{N}$ or $\mathfrak{p}\frac{N}{a_m|\cdots|a_1}$, then $\mathfrak{n}(\mathfrak{s})\cong \mathfrak{g}^\prec(\mathcal{P}_\mathfrak{s})\oplus\mathrm{Z}(\mathfrak{s})$ with $\mathfrak{g}^\prec(\mathcal{P}_\mathfrak{s})\cong\mathfrak{g}^{\prec}(a_1,\hdots,a_m)$.
\end{corollary}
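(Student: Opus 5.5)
The plan is to read everything off the modified meander $\mathrm{M}_\mathfrak{p}(\mathfrak{s})$ and the description $\prec_\mathfrak{s}=\rightarrow_\mathfrak{s}$ established just before Theorem~\ref{thm:nilposet}. The decomposition $\mathfrak{n}(\mathfrak{s})\cong\mathfrak{g}^\prec(\mathcal{P}_\mathfrak{s})\oplus\mathrm{Z}(\mathfrak{s})$ is already given by Theorem~\ref{thm:nilposet}. So it remains only to identify $\mathcal{P}_\mathfrak{s}$ with $\mathcal{P}(a_1,\hdots,a_m)$, up to isomorphism, in each of the two cases.

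First case: $\mathfrak{s}=\mathfrak{p}\frac{a_1|\cdots|a_m}{N}$. Here $\mathbf{b}=(N)$, so $B_1=[N]$ and there are no vertical lines below $L$ except after $N$. The sets $A_i\cap B_1=A_i$ give $C(\mathfrak{s})=A_1\cup\cdots\cup A_m$, and consecutive collections $C_i,C_{i+1}$ are separated by a single vertical line above $L$. By step (5), $\mathrm{M}_\mathfrak{p}(\mathfrak{s})$ therefore has the directed path $C_1\to C_2\to\cdots\to C_m$. Hence $p\rightarrow_\mathfrak{s}q$ exactly when $p\in A_i$ and $q\in A_j$ with $i<j$. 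These are exactly the relations of $\mathcal{P}(a_1,\hdots,a_m)$, since its covering relations join every element of $A_j$ to every element of $A_{j+1}$ and transitivity yields all $A_i\prec A_j$ with $i<j$. So $\mathcal{P}_\mathfrak{s}=\mathcal{P}(a_1,\hdots,a_m)$ on the nose. As a sanity check, Theorem~\ref{thm:nilrad} directly gives $E_{p,q}\in\mathfrak{n}(\mathfrak{s})$ iff $p<q$ lie in different top blocks.

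Second case: $\mathfrak{s}=\mathfrak{p}\frac{N}{a_m|\cdots|a_1}$. Now $A_1=[N]$ and the bottom blocks are $B_1,\hdots,B_m$ of sizes $a_m,\hdots,a_1$. Thus $C_i=B_i$, and consecutive collections are separated by a single line below $L$. Step (5) gives edges $C_{i+1}\to C_i$, so the directed path is $C_m\to C_{m-1}\to\cdots\to C_1$. Relabel $D_k:=C_{m+1-k}$, which has size $a_k$. Then $p\rightarrow_\mathfrak{s}q$ iff $p\in D_i$ and $q\in D_j$ with $i<j$. Any bijection $[N]\to[N]$ sending $D_k$ onto the $k$-th block of $S(a_1,\hdots,a_m)$ is then an isomorphism $\mathcal{P}_\mathfrak{s}\cong\mathcal{P}(a_1,\hdots,a_m)$. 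Isomorphic posets give isomorphic nilpotent Lie poset algebras, via conjugation by the corresponding permutation matrix, which yields $\mathfrak{g}^\prec(\mathcal{P}_\mathfrak{s})\cong\mathfrak{g}^\prec(a_1,\hdots,a_m)$.

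The main obstacle is merely bookkeeping of orientation in the second case: checking that the reversed ordering of the composition $a_m|\cdots|a_1$ exactly compensates the reversed edge direction. The statement is written for $\mathfrak{p}$, but the same argument applies verbatim to $\mathfrak{p}^A$, since $\mathcal{P}_\mathfrak{s}$ depends only on the compositions.
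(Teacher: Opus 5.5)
Your proposal is correct and follows the same route as the paper, which simply declares the corollary immediate from the description of $\mathcal{P}_\mathfrak{s}$ via $\mathrm{M}_\mathfrak{p}(\mathfrak{s})$ and $\prec_\mathfrak{s}=\rightarrow_\mathfrak{s}$. You supply the details the paper omits: the single path $C_1\to\cdots\to C_m$ in the first case, and the reversed path with the relabeling $D_k=C_{m+1-k}$ in the second case.
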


\begin{ex}
    For $\mathfrak{s}=\mathfrak{p}\frac{2|3|1|2|2}{7|3}$ and $\mathfrak{p}^A\frac{2|3|1|2|2}{7|3}$, the meander $\mathrm{M}_\mathfrak{p}(\mathfrak{s})$ is illustrated in Figure~\ref{fig:MsandPs} and the Hasse diagram of $\mathcal{P}_\mathfrak{s}$ is illustrated in Figure~\ref{fig:nilrad} (right).
    \begin{figure}[h]
        $$\begin{tikzpicture}
            \def\Node{\node [circle, draw = black, inner sep=2pt]}
     \Node (1) at (0,0) {$1\quad 2$};
	\Node (2) at (1.5,0) {$3\quad 4\quad 5$};
	\Node (3) at (2.75,0) {$6$};
	\Node (4) at (3.5,0) {$7$};
	\Node (5) at (4.25,0) {$8$};
	\Node (6) at (5.25,0) {$9\quad 10$};
    \draw (0.65,0)--(0.65,0.5);
    \draw (2.35,0)--(2.35,0.5);
    \draw (3.15,0)--(3.15,0.5);
    \draw (3.9,0)--(3.9,-0.5);
    \draw (4.6,0)--(4.6,0.5);
	 \draw[->] (1) to[bend right=50] (2);
	\draw[->] (2) to[bend right=50] (3);
	\draw[->] (3) to[bend right=50] (4);
	 \draw[<-] (4) to[bend left=50] (5);
	\draw[->] (5) to[bend right=50] (6);
        \end{tikzpicture}$$
        \caption{$\mathrm{M}_{\mathfrak{p}}(\mathfrak{s})$ for $\mathfrak{s}=\mathfrak{p}\frac{2|3|1|2|2}{7|3}$ and $\mathfrak{p}^A\frac{2|3|1|2|2}{7|3}$}\label{fig:MsandPs}
    \end{figure}
\end{ex}

Now, to describe the form of the posets $\mathcal{P}_\mathfrak{s}$, note that considering our definition of $\mathcal{P}_\mathfrak{s}$ given in terms of $\mathrm{M}_\mathfrak{p}(\mathfrak{s})$ above, we have that $p\precdot q$ in $\mathcal{P}_\mathfrak{s}$ if and only if there exists $i,j\in [\ell]$ such that $|i-j|=1$, $p\in C_i$, $q\in C_j$, and there is an edge directed from $C_i$ to $C_j$. Thus, reading off covering relations from left to right in $\mathrm{M}_\mathfrak{p}(\mathfrak{s})$, for $i\in [\ell-1]$, we have that either
\begin{itemize}
    \item[$(a)$] $C_i$ is connected by $C_{i+1}$ by edge directed towards $C_{i+1}$ so that the elements of $C_i$ are covered by the elements of $C_{i+1}$;
    \item[$(b)$] $C_i$ is connected by $C_{i+1}$ by edge directed towards $C_{i}$ so that the elements of $C_i$ cover the elements of $C_{i+1}$; or
    \item[$(c)$] $C_i$ is not connected to $C_{i+1}$ by an edge so that the elements of $C_j$ for $j\in [i]$ are not related to the elements of $C_k$ for $j+1\le k\le \ell$.
\end{itemize}
Consequently, defining the \demph{connected components} of $M_\mathfrak{p}(\mathfrak{s})$ to be the collections of $C_j$ for which each pair $C_i$ and $C_{i+1}$ are connected by an edge, the connected components of $M_\mathfrak{p}(\mathfrak{s})$ correspond to the connected components of $\mathcal{P}_\mathfrak{s}$. Moreover, identifying $i$ with $v_i$ for $i\in\mathcal{P}_\mathfrak{s}$, the connected components of $\mathrm{M}_\mathfrak{p}(\mathfrak{s})$ coincide with the central components of $\mathrm{M}(\mathfrak{s})$. To describe the form of the connected components of $\mathcal{P}_\mathfrak{s}$, assume that it is connected. Then $\mathrm{M}_\mathfrak{p}(\mathfrak{s})$ has a single connected component and there exists $1=i_0<i_1<\cdots<i_d=\ell$ such that all edges adjacent to $C_{i_j}$ are either directed into $C_{i_j}$ (i.e., $C_{i_j}$ is a \demph{sink}) or out of $C_{i_j}$ (i.e., $C_{i_j}$ is a \demph{source}) for $j\in [d]$. Note that if $C_{i_0}$ is a source (resp., sink), then $C_{i_j}$ is a source (resp., sink) for $j\in [d]$ even, while $C_{i_j}$ is a sink (resp., source) for $j\in [d]$ odd. In addition, for $j\in [d-1]$, if $C_{j}$ is a source (resp., sink), then all edges between $C_{i}$ and $C_{i+1}$ for $i_j\le i\le i_{j+1}$ are directed towards decreasing (resp., increasing) indices. Now, let $\mathcal{P}_j$ denote the poset isomorphic to the subposet of $\mathcal{P}_\mathfrak{s}$ induced by the elements $\bigcup_{i={i_j}}^{i_{j+1}}C_i$ for $j\in [d-1]$. Then in the case $C_{i_0}$ is a source (resp., sink), we have that $$\mathcal{P}_j\cong\mathcal{P}(|C_{i_{j+1}}|,|C_{i_{j+1}-1}|\hdots,|C_{i_j}|)$$ for $j\in [d-1]$ even (resp., odd), $$\mathcal{P}_j\cong\mathcal{P}(|C_{i_{j}}|,|C_{i_{j}+1}|\hdots,|C_{i_{j+1}}|)$$ for $j\in [d-1]$ odd (resp., even). Note that under the assumption that $C_{i_0}$ is a source (resp., sink) $\mathcal{P}_\mathfrak{s}$ is built from the $\mathcal{P}_j$ by identifying maximal (resp., minimal) elements of $\mathcal{P}_j$ and $\mathcal{P}_{j+1}$ for $j\in [d-2]$ even, and identifying minimal (resp., maximal) elements of $\mathcal{P}_j$ and $\mathcal{P}_{j+1}$ for $j\in [d-2]$ odd. When $C_{i_0}$ is a source we denote the poset built from the $\mathcal{P}_j$ for $j\in [d-1]$ described above by $\mathrm{out}(\mathcal{P}_1,\hdots,\mathcal{P}_d)$, while when $C_{i_0}$ is a sink we denote the poset by $\mathrm{in}(\mathcal{P}_1,\hdots,\mathcal{P}_d)$.

\begin{ex}
    Let $\mathfrak{s}=\mathfrak{p}\frac{2|3|1|2|2}{7|3}$ or $\mathfrak{p}^A\frac{2|3|1|2|2}{7|3}$. Denote the subposet of $\mathcal{P}_\mathfrak{s}$ induced by the elements $\{1,2,3,4,5,6,7\}$ by $\mathcal{P}_1$, that induced by the elements $\{7,8\}$ by $\mathcal{P}_2$, and that induced by the elements $\{7,8,9,10\}$ by $\mathcal{P}_3$. Then $$\mathcal{P}_1\cong\mathcal{P}(2,3,1,1),\quad \mathcal{P}_2\cong\mathcal{P}(1,1),\quad \mathcal{P}_3\cong\mathcal{P}(1,2),\quad\text{and}\quad \mathcal{P}_\mathfrak{s}=\mathrm{out}(\mathcal{P}_1,\mathcal{P}_2,\mathcal{P}_3).$$
\end{ex}

\begin{remark}
    Note that if $\mathcal{P}_\mathfrak{s}=\mathrm{in}(\mathcal{P}_1,\hdots,\mathcal{P}_d)$, then we assume that $1\in\mathcal{P}_1$ and the first edge in $\mathrm{M}_\mathfrak{p}(\mathfrak{s})$ from left to right is above and directed towards $C_1$. Moreover, if $\mathcal{P}_\mathfrak{s}=\mathrm{in}(\mathcal{P}_1)$ with $\mathcal{P}_1\cong\mathcal{P}(a_1,\hdots,a_m)$, then $\mathfrak{s}=\mathfrak{p}\frac{N}{a_m|\cdots|a_1}$ or $\mathfrak{p}^A\frac{N}{a_m|\cdots|a_1}$. Similarly, if $\mathcal{P}_\mathfrak{s}=\mathrm{out}(\mathcal{P}_1,\hdots,\mathcal{P}_d)$, then we assume that $1\in\mathcal{P}_1$ and the first edge in $\mathrm{M}_\mathfrak{p}(\mathfrak{s})$ from left to right is below and directed away from $C_1$. Moreover, if $\mathcal{P}_\mathfrak{s}=\mathrm{out}(\mathcal{P}_1)$ with $\mathcal{P}_1\cong\mathcal{P}(a_1,\hdots,a_m)$, then $\mathfrak{s}=\mathfrak{p}\frac{a_1|\cdots|a_m}{N}$ or $\mathfrak{p}^A\frac{a_1|\cdots|a_m}{N}$.
\end{remark}

So, intuitively, we have shown that the connected components of the posets $\mathcal{P}_\mathfrak{s}$ are isomorphic to posets which can be built from those of the form $\mathcal{P}(a_1,\hdots,a_m)$ by alternatingly identifying minimal and maximal elements of each. While we won't prove it, it is not hard to show that such posets are always isomorphic to $\mathcal{P}_\mathfrak{s}$ for some seaweed algebra $\mathfrak{s}$.

\section{Index}\label{sec:index}

In this section, we establish a combinatorial formula for the index of nilradicals of (type-A) seaweed algebras. Our new formula depends on an edge-weighted meander graph, denoted $\mathrm{M}_\mathfrak{n}(\mathfrak{s})$, which we define as follows. 

Letting $\mathfrak{s}=\mathfrak{p}\frac{\mathbf{a}}{\mathbf{b}}$ or $\mathfrak{p}^A\frac{\mathbf{a}}{\mathbf{b}}$ draw $\mathrm{M}(\mathfrak{s})$. Denoting by $L$ the horizontal line along which the vertices of $\mathrm{M}(\mathfrak{s})$ are drawn, add vertical lines above (resp., below) $L$ which partition the vertices of $\mathrm{M}_\mathfrak{n}(\mathfrak{s})$ into their top (resp., bottom) blocks. Then for each top block of $\mathrm{M}(\mathfrak{s})$ from left to right, followed by the bottom blocks, also from left to right, proceed as follows. Denoting the current top (resp., bottom) block under consideration by $B$, assume that the collection of vertices of $B$ are partitioned into the bottom (resp., top) blocks $V_1,\hdots,V_\ell$.
    \begin{enumerate}
        \item[$(a)$] If $\ell=1$, then assign all top (resp., bottom) edges connecting vertices of $V_1$ the weight of 0.
        \item[$(b)$] If $\ell>1$, $M=\max\{|V_1|,|V_\ell|\}$, and $m=\min\{|V_1|,|V_\ell|\}$, then weight the top (resp., bottom) edges connecting the $m$ first vertices of $V_1$ with the $m$ last vertices of $V_\ell$ by $M$.
    \end{enumerate}
Now, repeat $(a)$ or $(b)$ for the collection $V_1',\hdots,V'_{\ell}$ where $V_i'$ consists of the vertices of $V_i$ not adjacent to a weighted top (resp., bottom) edge and we omit any $V_i'=\emptyset$. If $V'_i=\emptyset$ for all $i\in [\ell]$, then the edges corresponding to the top (resp., bottom) block $B$ are weighted and we move to the next top or bottom block. Continue until all edges have been assigned a weight.

\begin{ex}\label{ex:Mn1}
    For $\mathfrak{s}_1=\mathfrak{p}\frac{2|3|1|2|2}{7|3}$ or $\mathfrak{p}^A\frac{2|3|1|2|2}{7|3}$, the meander $\mathrm{M}_\mathfrak{n}(\mathfrak{s}_1)$ is illustrated in Figure~\ref{fig:Mns1}.
    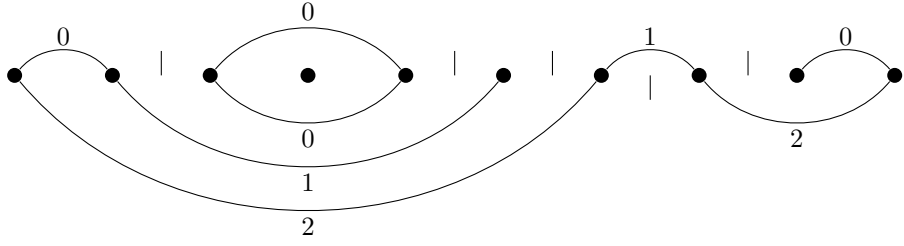
\begin{figure}[h]
        $$\begin{tikzpicture}[scale=1.3]
	\def\Node{\node [circle,  fill, inner sep=2pt]}
	\node at (0,0) {};
     \Node (1) at (0,0) {};
	\Node (2) at (1,0) {};
	\Node (3) at (2,0) {};
	\Node (4) at (3,0) {};
	\Node (5) at (4,0) {};
	\Node (6) at (5,0) {};
    \Node (7) at (6,0) {};
	\Node (8) at (7,0) {};
	\Node (9) at (8,0) {};
	\Node (10) at (9,0) {};
     \draw (1.5, 0)--(1.5, 0.25);
     \draw (4.5, 0)--(4.5, 0.25);
     \draw (5.5, 0)--(5.5, 0.25);
     \draw (7.5, 0.25)--(7.5, 0);
     \draw (6.5, 0)--(6.5, -0.25);
	\draw (3) to[bend left=50] (5);
    \draw (3) to[bend right=50] (5);
    \draw (2) to[bend right=50] (6);
    \draw (2) to[bend right=50] (1);
    \draw (8) to[bend right=50] (7);
	\draw (1) to[bend right=50] (7);
    \draw (9) to[bend left=50] (10);
	\draw (8) to[bend right=50] (10);
    \node at (8.5, 0.4) {$0$};
    \node at (8, -0.65) {$2$};
    \node at (6.5, 0.4) {$1$};
    \node at (0.5, 0.4) {$0$};
    \node at (3, 0.65) {$0$};
    \node at (3, -0.65) {$0$};
    \node at (3, -1.1) {$1$};
    \node at (3, -1.55) {$2$};
\end{tikzpicture}$$
        \caption{$\mathrm{M}_\mathfrak{n}(\mathfrak{s}_1)$ for $\mathfrak{s}_1=\mathfrak{p}\frac{2|3|1|2|2}{7|3}$ or $\mathfrak{p}^A\frac{2|3|1|2|2}{7|3}$}\label{fig:Mns1}
    \end{figure}
\end{ex}

\begin{ex}\label{ex:Mn2}
    For $\mathfrak{s}_2=\mathfrak{p}\frac{3|3|5|2}{6|2|1|2|2}$ or $\mathfrak{p}^A\frac{3|3|5|2}{6|2|1|2|2}$, the meander $\mathrm{M}_\mathfrak{n}(\mathfrak{s}_2)$ is illustrated in Figure~\ref{fig:Mns2}.
    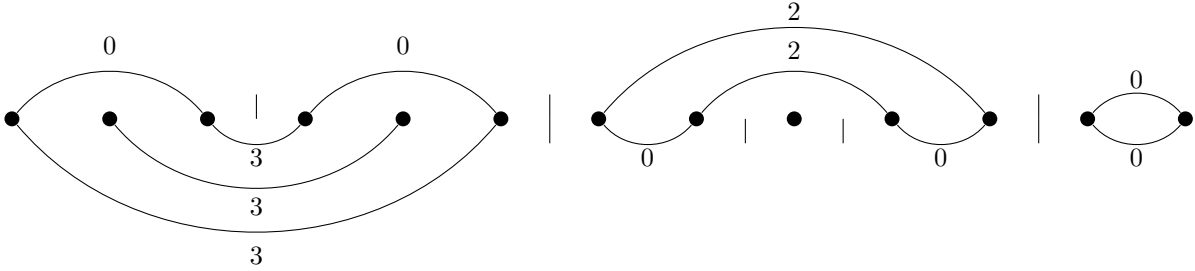
\begin{figure}[h]
        $$\begin{tikzpicture}[scale=1.3]
	\def\Node{\node [circle,  fill, inner sep=2pt]}
	\node at (0,0) {};
     \Node (1) at (0,0) {};
	\Node (2) at (1,0) {};
	\Node (3) at (2,0) {};
	\Node (4) at (3,0) {};
	\Node (5) at (4,0) {};
	\Node (6) at (5,0) {};
    \Node (7) at (6,0) {};
	\Node (8) at (7,0) {};
	\Node (9) at (8,0) {};
	\Node (10) at (9,0) {};
    \Node (11) at (10,0) {};
    \Node (12) at (11,0) {};
    \Node (13) at (12,0) {};
    \draw (2.5, 0)--(2.5, 0.25);
    \draw (5.5, 0)--(5.5, 0.25);
    \draw (5.5, 0)--(5.5, -0.25);
    \draw (7.5, 0)--(7.5, -0.25);
    \draw (8.5, 0)--(8.5, -0.25);
    \draw (10.5, 0)--(10.5, 0.25);
    \draw (10.5, 0)--(10.5, -0.25);
    \draw (1) to[bend left=50] (3);
    \draw (4) to[bend left=50] (6);
    \draw (7) to[bend left=50] (11);
    \draw (8) to[bend left=50] (10);
    \draw (12) to[bend left=50] (13);
    \draw (1) to[bend right=50] (6);
    \draw (2) to[bend right=50] (5);
    \draw (3) to[bend right=50] (4);
    \draw (12) to[bend right=50] (13);
    \draw (7) to[bend right=50] (8);
    \draw (10) to[bend right=50] (11);
    \node at (1,0.75) {$0$};
    \node at (4,0.75) {$0$};
    \node at (2.5,-1.4) {$3$};
    \node at (2.5,-0.9) {$3$};
    \node at (2.5,-0.4) {$3$};
    \node at (6.5,-0.4) {$0$};
    \node at (9.5,-0.4) {$0$};
    \node at (11.5,-0.4) {$0$};
    \node at (11.5,0.4) {$0$};
    \node at (8,0.7) {$2$};
    \node at (8,1.1) {$2$};
\end{tikzpicture}$$
        \caption{$\mathrm{M}_\mathfrak{n}(\mathfrak{s}_2)$ for $\mathfrak{s}_2=\mathfrak{p}\frac{3|3|5|2}{6|2|1|2|2}$ or $\mathfrak{p}^A\frac{3|3|5|2}{6|2|1|2|2}$}\label{fig:Mns2}
    \end{figure}
\end{ex}

Extending language from the case of $\mathrm{M}(\mathfrak{s})$, we define the \demph{central components} of $\mathrm{M}_\mathfrak{n}(\mathfrak{s})$ to be those of $\mathrm{M}(\mathfrak{s})$. In addition, we use the notation $\mathrm{Cen}(\mathfrak{s})$ for the collection of central components of $\mathrm{M}_\mathfrak{n}(\mathfrak{s})$. Finally, to state the main result of this section, for a weighted edge $e$ in $\mathrm{M}_\mathfrak{n}(\mathfrak{s})$, we let $\mathrm{wt}(e)\in\mathbb{Z}_{\ge 0}$ denote the weight assigned.

\begin{theorem}\label{thm:indmain}
    Let $\mathfrak{s}$ be a (type-A) seaweed algebra and $\mathrm{E}(\mathfrak{s})$ denote the collection of edges of $\mathrm{M}_\mathfrak{n}(\mathfrak{s})$. If $\mathfrak{s}$ is a seaweed algebra, then $$\ind\mathfrak{n}(\mathfrak{s})=|\mathrm{Cen}(\mathfrak{s})|+\sum_{e\in \mathrm{E}(\mathfrak{s})}\mathrm{wt}(e);$$ otherwise, if $\mathfrak{s}$ is a type-A seaweed algebra, then $$\ind\mathfrak{n}(\mathfrak{s})=|\mathrm{Cen}(\mathfrak{s})|-1+\sum_{e\in \mathrm{E}(\mathfrak{s})}\mathrm{wt}(e).$$
\end{theorem}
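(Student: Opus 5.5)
The plan is to combine Theorem~\ref{thm:nilposet} with Theorem~\ref{thm:indexnil}. The index is additive over direct sums, and the center $\mathrm{Z}(\mathfrak{s})$ is abelian, so its index equals its dimension. By Theorem~\ref{thm:center}, $\dim \mathrm{Z}(\mathfrak{s})=|\mathrm{Cen}(\mathfrak{s})|$ in the $\mathfrak{gl}$ case: one free constant per central component. In the type-A case the trace-zero condition imposes one linear relation, which is nontrivial on $S$, so the dimension is $|\mathrm{Cen}(\mathfrak{s})|-1$. It therefore suffices to prove
\[
\ind\mathfrak{g}^\prec(\mathcal{P}_\mathfrak{s})=\sum_{e\in\mathrm{E}(\mathfrak{s})}\mathrm{wt}(e),
\]
which is the same statement in both cases. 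Both sides are additive over connected components: $\mathcal{P}_\mathfrak{s}$ splits along central components, and so do the edges of $\mathrm{M}_\mathfrak{n}(\mathfrak{s})$. Hence we may assume $\mathcal{P}_\mathfrak{s}$ is connected, i.e.\ $\mathrm{M}(\mathfrak{s})$ has a single central component.

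Next I will compute the right-hand side of Theorem~\ref{thm:indexnil} block by block. Using the $\mathrm{M}_\mathfrak{p}(\mathfrak{s})$ description, each relation $p\prec_\mathfrak{s} q$ corresponds to a pair $p,q$ lying in a common top block $A_j$ (with $p>q$) or a common bottom block $B_k$ (with $p<q$) but not in a common $C_i$. So $|\mathrm{Rel}(\mathcal{P}_\mathfrak{s})|$ decomposes as a sum over top and bottom blocks $B$. Within a block $B$ partitioned by the opposite blocks into $V_1,\dots,V_\ell$, the block contributes $\sum_{i<i'}|V_i||V_{i'}|$ relations. I then want to charge the correction term $2\sum\min(\mathrm{D},\mathrm{U})$ to blocks as well. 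An element $p$ is non-extremal exactly when it lies in some $C_i$ that is neither a source nor a sink. In that case one neighbouring $C$-class lies below $p$ and the other above it, and these come from the top block and the bottom block containing $p$ respectively. The structure $\mathrm{out}/\mathrm{in}(\mathcal{P}_1,\dots,\mathcal{P}_d)$ should then give $\mathrm{D}(p)$ and $\mathrm{U}(p)$ explicitly as sums of sizes of consecutive $C$-classes within a single chain $\mathcal{P}_j\cong\mathcal{P}(c_1,\dots,c_r)$.

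The core step is a per-block identity. For each block $B$, the sum of weights assigned in steps (a)/(b) must equal the number of relations internal to $B$ minus the share of $2\sum\min(\mathrm{D},\mathrm{U})$ attributed to $B$. I would prove this by induction on the peeling procedure. Each application of step (b) removes the outer $m$ vertices of $V_1$ and of $V_\ell$, and these are matched by $m$ edges of weight $M=\max(|V_1|,|V_\ell|)$. I must show that removing these $2m$ vertices lowers the block's share of the Theorem~\ref{thm:indexnil} expression by exactly $mM$. This amounts to a direct computation for the chain poset $\mathcal{P}(c_1,\dots,c_r)$: the outermost classes interact with the middle classes, whose $\min(\mathrm{D},\mathrm{U})$ values depend on cumulative sums from either end. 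A useful sanity check is that $\ind\mathfrak{g}^\prec(a_1,\dots,a_m)$, computed from Theorem~\ref{thm:indexnil}, should match the weights of the meander for $\mathfrak{p}\frac{a_1|\cdots|a_m}{N}$ (Corollary~\ref{cor:parbposet}). I would verify this first, e.g.\ by induction on $m$ peeling $a_1$ and $a_m$.

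The main obstacle is the attribution step: $\min(\mathrm{D}(p),\mathrm{U}(p))$ is a global quantity, while the weights are computed block-locally. For a non-extremal $p$, $\mathrm{D}(p)$ counts elements in one direction along the chain $\mathcal{P}_j$ and $\mathrm{U}(p)$ counts them in the other, and both directions lie inside the single top or bottom block containing $p$ together with its neighbour class. So everything should localize to the block spanning that $\mathcal{P}_j$. I would first establish this localization lemma, namely that for non-extremal $p$ both down- and up-sets lie within one chain $\mathcal{P}_j$, after which the peeling computation reduces to the parabolic case $\mathcal{P}(c_1,\dots,c_r)$.
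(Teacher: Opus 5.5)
Your proposal is correct and shares the paper's skeleton. The center contributes $|\mathrm{Cen}(\mathfrak{s})|$ or $|\mathrm{Cen}(\mathfrak{s})|-1$ as in Lemma~\ref{lem:indcent}. The per-block peeling you outline is exactly Lemma~\ref{lem:recur} combined with Proposition~\ref{prop:parind}.

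Where you differ is the gluing of the monotone pieces. The paper splits $\mathfrak{g}^\prec(\mathcal{P}_\mathfrak{s})$ as a Lie algebra direct sum along the shared source/sink classes (Lemma~\ref{lem:paraenough}). It then inducts on the number of pieces, building auxiliary seaweeds $\mathfrak{s}_1,\mathfrak{s}_2$ and performing surgery on $\mathrm{M}_\mathfrak{n}$. You instead split the numerical expression of Theorem~\ref{thm:indexnil} directly: relations are partitioned by blocks, and each $\min(\mathrm{D},\mathrm{U})$ term is local to one block.

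Each route buys something different:
\begin{itemize}
\item Yours is non-inductive and avoids the auxiliary seaweeds, whose compositions are fiddly to write down correctly. It also does not need the reduction to connected components.
\item The paper's proves more: an actual isomorphism of $\mathfrak{g}^\prec(\mathcal{P}_\mathfrak{s})$ with a direct sum of algebras $\mathfrak{g}^\prec(c_1,\dots,c_r)$, not merely an equality of indices.
\end{itemize}

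To make your attribution airtight, record the following facts.
\begin{itemize}
\item Each block with $\ell\ge 2$ is exactly one maximal monotone run of classes. Its end classes $V_1,V_\ell$ are sources or sinks, hence consist of extremal elements.
\item Each interior class $V_i$ ($1<i<\ell$) is flanked by two separators of the same type. So it is an entire opposite block, which has $\ell=1$, weight $0$, and contributes no relations.
\item Hence every non-extremal element is charged to exactly one block. Its $\mathrm{D}$ and $\mathrm{U}$ in $\mathcal{P}_\mathfrak{s}$ coincide with those in the chain poset $\mathcal{P}(|V_1|,\dots,|V_\ell|)$.
\item For a top block this chain is reversed, which changes neither side of the identity: the formula is invariant under poset duality and the weighting rule is symmetric in $V_1,V_\ell$.
\end{itemize}

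One intermediate claim is wrong and should be dropped. You say the lower and upper neighbouring classes of a non-extremal $p$ come from the top block and the bottom block containing $p$, respectively. In $\mathrm{M}_\mathfrak{p}(\mathfrak{s})$, a top separator gives $C_i\to C_{i+1}$ and a bottom separator gives $C_{i+1}\to C_i$. So a class is non-extremal exactly when both its separators have the same type, and then both neighbours lie in the same block as $p$. Separators of different types are precisely what make a class a source or a sink. Your later localization lemma states the correct version, and that is the one your per-block identity actually uses.
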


\begin{remark}
    Note that one can compute $|\mathrm{Cen}(\mathfrak{s})|$ combinatorially from $\mathrm{M}_\mathfrak{n}(\mathfrak{s})$ using Proposition~\ref{prop:cccom}. For another approach, it is straightforward to verify that $|\mathrm{Cen}(\mathfrak{s})|$ corresponds to one more than the number of pairs of vertical lines lying above and below the line $L$ in $\mathrm{M}_\mathfrak{n}(\mathfrak{s})$ which occur between the same pair of consecutive vertices $v_i$ and $v_{i+1}$.
\end{remark}

\begin{ex}
    For the seaweed algebra $\mathfrak{s}_1$ of Example~\ref{ex:Mn1}, we have $$|\mathrm{Cen}(\mathfrak{s}_1)|=1\qquad\text{and}\qquad\sum_{e\in \mathrm{E}(\mathfrak{s}_1)}\mathrm{wt}(e)=6,$$ so that $\ind\mathfrak{n}(\mathfrak{s}_1)=1+6=7$. On the other hand,  for the seaweed algebra $\mathfrak{s}_2$ of Example~\ref{ex:Mn2}, we have $$|\mathrm{Cen}(\mathfrak{s}_2)|=3\qquad\text{and}\qquad\sum_{e\in \mathrm{E}(\mathfrak{s}_2)}\mathrm{wt}(e)=13,$$ so that $\ind\mathfrak{n}(\mathfrak{s}_2)=3+13=16$. To find the index of the type-A versions of $\mathfrak{s}_1$ and $\mathfrak{s}_2$, subtract one from the values found above.
\end{ex}

Since $\ind\mathfrak{g}_1\oplus\mathfrak{g}_2=\ind\mathfrak{g}_1+\ind\mathfrak{g}_2$ for Lie algebras $\mathfrak{g}_1$ and $\mathfrak{g}_2$, we break the proof of Theorem~\ref{thm:indmain} into two pieces: considering first $\ind\mathrm{Z}(\mathfrak{s})$ and then $\ind\mathfrak{g}^\prec(\mathcal{P}_\mathfrak{s})$. For $\ind\mathrm{Z}(\mathfrak{s})$, note that, considering the definition of index, it follows that $\ind\mathrm{Z}(\mathfrak{s})=\dim\mathrm{Z}(\mathfrak{s})$. Thus, we get the following as an immediate consequence of Theorem~\ref{thm:center}.

\begin{lemma}\label{lem:indcent}
    If $\mathfrak{s}$ is a seaweed algebra, then $$\ind\mathrm{Z}(\mathfrak{s})=|\mathrm{Cen}(\mathfrak{s})|.$$ Similarly, if $\mathfrak{s}$ is a type-A seaweed algebra, then $$\ind\mathrm{Z}(\mathfrak{s})=|\mathrm{Cen}(\mathfrak{s})|-1.$$
\end{lemma}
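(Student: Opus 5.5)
The plan is to note that the center is abelian, so its index equals its dimension, and then compute $\dim\mathrm{Z}(\mathfrak{s})$ from the description in Theorem~\ref{thm:center}. Since $\mathrm{Z}(\mathfrak{s})$ is abelian, the bilinear form $F([-,-])$ vanishes identically for every $F\in\mathrm{Z}(\mathfrak{s})^*$. Its kernel is therefore all of $\mathrm{Z}(\mathfrak{s})$, which gives $\ind\mathrm{Z}(\mathfrak{s})=\dim\mathrm{Z}(\mathfrak{s})$.

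Next we compute $\dim\mathrm{Z}(\mathfrak{s})$. Write $\mathrm{Cen}(\mathfrak{s})=\{K_1,\hdots,K_r\}$ with $r=|\mathrm{Cen}(\mathfrak{s})|$, and let $D_t=\sum_{v_i\in K_t}E_{i,i}$.

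\emph{The $\mathfrak{gl}$ case.} Every diagonal matrix lies in $\mathfrak{s}=\mathfrak{p}\frac{\mathbf{a}}{\mathbf{b}}$. The central components partition $\{v_1,\hdots,v_N\}$ into consecutive intervals. By Theorem~\ref{thm:center}, $\mathrm{Z}(\mathfrak{s})$ therefore consists exactly of the diagonal matrices that are constant on each $K_t$. The $D_t$ have disjoint supports, so they are linearly independent, and they span this space. Hence $\dim\mathrm{Z}(\mathfrak{s})=r$.

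\emph{The type-A case.} The diagonal part of $\mathfrak{p}^A\frac{\mathbf{a}}{\mathbf{b}}$ is precisely the set of trace-zero diagonal matrices. So $S$ in Theorem~\ref{thm:center} is $\{\sum_t c_tD_t : \sum_t c_t|K_t|=0\}$. This is the kernel of a nonzero linear functional on the $r$-dimensional span of the $D_t$; the functional is nonzero because each $|K_t|\ge 1$. Its dimension is therefore $r-1$. When $r=1$ this correctly gives $0$, since the only trace-zero scalar matrix is $0$.

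There is no real obstacle here. The only point needing care is that, in the type-A case, Theorem~\ref{thm:center} is read with the trace-zero constraint imposed. That holds because $S$ was defined as a subset of $\mathfrak{s}$.
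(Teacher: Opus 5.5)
Your proposal is correct and matches the paper's argument: the paper also notes that the index of the abelian algebra $\mathrm{Z}(\mathfrak{s})$ equals its dimension, and then reads that dimension off from Theorem~\ref{thm:center}. You additionally spell out the dimension count, including the trace-zero constraint that cuts the dimension from $|\mathrm{Cen}(\mathfrak{s})|$ to $|\mathrm{Cen}(\mathfrak{s})|-1$ in the type-A case, which the paper leaves implicit.
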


Shifting focus to $\ind\mathfrak{g}^\prec(\mathcal{P}_\mathfrak{s})$, it suffices to restrict attention to $\mathfrak{s}$ a seaweed algebra. Our main result concerning $\ind\mathfrak{g}^\prec(\mathcal{P}_\mathfrak{s})$ is as follows.

\begin{theorem}\label{thm:indpo}
    Let $\mathfrak{s}$ be a seaweed algebra. If $\mathrm{E}(\mathfrak{s})$ denotes the collection of edges in $\mathrm{M}_\mathfrak{n}(\mathfrak{s})$, then $$\ind\mathfrak{g}^\prec(\mathcal{P}_\mathfrak{s})=\sum_{e\in \mathrm{E}(\mathfrak{s})}\mathrm{wt}(e).$$
\end{theorem}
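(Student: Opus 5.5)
We will prove the formula by applying Theorem~\ref{thm:indexnil} to $\mathcal{P}_\mathfrak{s}$ and then redistributing the resulting poset-theoretic quantities over the edges of $\mathrm{M}_\mathfrak{n}(\mathfrak{s})$, one top or bottom block at a time.

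\textbf{Step 1: reduce to a single top or bottom block.} Write $\mathrm{Rel}(\mathcal{P}_\mathfrak{s})$ as the set of pairs $(p,q)$ with $E_{p,q}\in\mathfrak{n}(\mathfrak{s})$. By the description preceding $\mathrm{M}_\mathfrak{p}(\mathfrak{s})$, every such pair has $p,q$ lying in a common top block $A_j$ (when $p>q$) or in a common bottom block $B_k$ (when $p<q$), but not in a common $C_i$. Hence $|\mathrm{Rel}(\mathcal{P}_\mathfrak{s})|$ is a sum over blocks $B$. If $B$ is partitioned by the opposite blocks into $V_1,\dots,V_\ell$, then the contribution of $B$ is $\sum_{i<j}|V_i||V_j|$. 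Similarly, for $p\in C_i\subseteq A_j\cap B_k$ we have
\[
\mathrm{D}(\mathcal{P}_\mathfrak{s},p)=|A_j\setminus C_i|_{\text{one side}}+|B_k\setminus C_i|_{\text{other side}},
\]
where each set is restricted to the appropriate side of $C_i$, and $\mathrm{U}$ is the complementary count.

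\textbf{Step 2: identify the interior elements and their minima.} Here we use the source/sink structure from Section~\ref{sec:posetalg}. An element $p$ is extremal exactly when $C_{i_p}$ is a source or sink in $\mathrm{M}_\mathfrak{p}(\mathfrak{s})$. Otherwise, for $p$ in a non-extremal $C_i$, all arrows pass through $C_i$ in one direction. Then $\mathrm{D}(p)$ equals the number of elements of $A_{j_p}$ (or $B_{k_p}$) on one side of $C_i$, and $\mathrm{U}(p)$ equals the number of elements of the other block on the other side.

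\textbf{Step 3: match $\mathrm{Rel}-2\sum\min$ against the weights, by induction.} For each block $B$ we would like to show that
\[
\sum_{i<j}|V_i||V_j|\;-\;(\text{its share of }2\sum\min)\;=\;\sum_{e\text{ an edge of }B}\mathrm{wt}(e).
\]
The plan is to verify this by induction following the peeling procedure in the definition of $\mathrm{M}_\mathfrak{n}(\mathfrak{s})$. Peeling the outer $m$ vertices of $V_1$ and $V_\ell$ (with $m=\min\{|V_1|,|V_\ell|\}$ and $M=\max\{|V_1|,|V_\ell|\}$) contributes $mM$ to the weight sum. One then checks that the relations involving these $m$ elements, minus twice their min-terms (and the min-terms of interior elements they affect), total exactly $mM$. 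The case $\ell=1$ gives weight $0$: there are no relations, and the elements are interior-neutral.

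\textbf{The main obstacle} is the bookkeeping of Step 3. The term $\min(\mathrm{D}(p),\mathrm{U}(p))$ for a single $p$ mixes a top block and a bottom block, so it must be split between the two blocks consistently. The natural choice is to charge the min-term of each interior $C_i$ to whichever adjacent block realizes the minimum, and to show that this charging exactly reproduces the "$\max$ versus $\min$" comparison of $|V_1|$ and $|V_\ell|$ in rule (b). I would first check this on a connected component of the form $\mathcal{P}(a_1,\dots,a_m)$, i.e.\ the case of Corollary~\ref{cor:parbposet}, where only one block carries relations. There, a direct computation should confirm that $\sum_{i<j}a_ia_j-2\sum_{1<k<m}a_k\min(a_1+\dots+a_{k-1},\,a_{k+1}+\dots+a_m)$ equals the peeled weight sum. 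After that I would extend to the glued posets $\mathrm{out}(\mathcal{P}_1,\dots,\mathcal{P}_d)$ and $\mathrm{in}(\mathcal{P}_1,\dots,\mathcal{P}_d)$, noting that the identified source and sink elements are extremal and so contribute no min-terms, which makes the per-block contributions additive.
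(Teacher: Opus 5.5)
Your architecture (apply Theorem~\ref{thm:indexnil}, split everything over top and bottom blocks, and prove a single-block identity by following the peeling) is sound. However, Step~2 is false, and your ``main obstacle'' and the proposed charging rule both rest on it.

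For a non-extremal $p$, $\mathrm{D}(p)$ and $\mathrm{U}(p)$ are never drawn from a top block and a bottom block respectively. If $C_i$ is neither a source nor a sink, its two incident arrows point the same way. So either both are drawn below (both boundaries of $C_i$ are lone top lines) or both are drawn above (both are lone bottom lines). In the first case $C_i=A_j$ is an entire top block, and $C_{i-1},C_i,C_{i+1}$ lie in a single bottom block $B_k$. Your own Step~1 formula with $A_j\setminus C_i=\emptyset$ then gives $\mathrm{D}(p)=|\{q\in B_k: q<\min C_i\}|$ and $\mathrm{U}(p)=|\{q\in B_k: q>\max C_i\}|$. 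The second case is symmetric.

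Hence the non-extremal $C_i$ are exactly the interior parts $V_k$ ($1<k<\ell$) of the blocks, each interior to one block only. Each min-term is a minimum of two partial sums of that block's $|V_i|$. There is nothing to split, so drop the rule ``charge to whichever block realizes the minimum.'' Theorem~\ref{thm:indexnil} then gives at once
\[
\ind\mathfrak{g}^\prec(\mathcal{P}_\mathfrak{s})=\sum_{B}\ind\mathfrak{g}^{\prec}\bigl(|V^B_1|,\dots,|V^B_{\ell_B}|\bigr),
\]
summed over all top and bottom blocks. The blocks with $\ell_B\ge 2$ are precisely the pieces $\mathcal{P}_j$ of the $\mathrm{in}/\mathrm{out}$ description, so this is the paper's decomposition. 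The paper obtains additivity at the Lie-algebra level, via connected components, Lemma~\ref{lem:paraenough}, and induction on the number of pieces. Your corrected Steps~1--2 give it in one line, which makes your final detour through $\mathrm{in}/\mathrm{out}$ gluings unnecessary.

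What then carries the proof is the single-block identity, which you only assert (``a direct computation should confirm''). It is the content of Lemma~\ref{lem:recur} and Proposition~\ref{prop:parind}. Note that the comparison of $|V_1|$ with $|V_\ell|$ in rule (b) is not matched to individual min-terms; it emerges from a recursion.

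Here is the missing observation. Take $\mathcal{P}(a_1,\dots,a_m)$ with $a_1\ge a_m$ and $N=a_1+\cdots+a_m$, and delete $a_m$ elements of the first part together with the whole last part. This removes $a_m(2N-a_1-2a_m)$ relations. Each of the $N-a_1-a_m$ interior elements loses exactly $a_m$ elements below it and $a_m$ above it, while having $\mathrm{D}\ge a_1\ge a_m$ and $\mathrm{U}\ge a_m$. So its $\min(\mathrm{D},\mathrm{U})$ drops by exactly $a_m$; elements whose minimum reaches $0$ have become extremal and correctly leave the sum. The index therefore drops by $a_m(2N-a_1-2a_m)-2a_m(N-a_1-a_m)=a_1a_m$. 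This is exactly the total weight $a_1\cdot a_m$ that rule (b) places on the $a_m$ peeled edges, and induction on the number of parts finishes the proof.
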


For the proof of Theorem~\ref{thm:indpo} we require the following results. First, Lemma~\ref{lem:recur} provides a recursive formula for the index of nilpotent Lie poset algebras of the form $\mathfrak{g}^\prec(a_1,\cdots,a_m)$. Next, utilizing Lemma~\ref{lem:recur}, it is shown that Theorem~\ref{thm:indpo} holds in the case that $\mathfrak{s}=\mathfrak{p}\frac{a_1|\cdots|a_m}{N}$ or $\mathfrak{p}\frac{N}{a_m|\cdots|a_1}$ in Proposition~\ref{prop:parind}. Finally, Lemma~\ref{lem:paraenough} identifies a poset decomposition which is used to show that Theorem~\ref{thm:indpo} follows as a consequence of Proposition~\ref{prop:parind}.

\begin{lemma}\label{lem:recur}
    Let $\mathfrak{g}_1=\mathfrak{g}^\prec(a_1,\cdots,a_m)$ for $m\ge 2$. Moreover, when $m>2$, let $$\mathfrak{g}_2=\begin{cases}
        \mathfrak{g}^\prec(a_2,\cdots,a_{m-1}), & a_1=a_m\\
        \\
        \mathfrak{g}^\prec(a_1-a_m,\cdots,a_{m-1}), & a_1>a_m \\
        \\
        \mathfrak{g}^\prec(a_2,\cdots,a_m-a_1), & a_1<a_m.
    \end{cases}$$
    Then $\ind\mathfrak{g}_1=a_1a_2$ when $m=2$, and $\ind\mathfrak{g}_1=\ind\mathfrak{g}_2+a_1a_m$ otherwise.
\end{lemma}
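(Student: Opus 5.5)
The plan is a direct computation from the formula of Theorem~\ref{thm:indexnil}, applied to both $\mathcal{P}(a_1,\hdots,a_m)$ and the smaller poset defining $\mathfrak{g}_2$. The differences in $|\mathrm{Rel}|$ and in the correction sums should cancel except for the term $a_1a_m$.

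\emph{Ingredients.} For $\mathcal{P}=\mathcal{P}(a_1,\hdots,a_m)$ with layers $A_1,\hdots,A_m$, every element of $A_j$ lies below every element of $A_k$ for $j<k$. Hence
\[
|\mathrm{Rel}(\mathcal{P})|=\sum_{j<k}a_ja_k \quad\text{and}\quad \mathrm{Ext}(\mathcal{P})=A_1\cup A_m .
\]
For $p\in A_j$ with $1<j<m$, set $\mathrm{D}_j=\mathrm{D}(\mathcal{P},p)=a_1+\cdots+a_{j-1}$ and $\mathrm{U}_j=\mathrm{U}(\mathcal{P},p)=a_{j+1}+\cdots+a_m$. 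Then
\[
\ind\mathfrak{g}_1=\sum_{j<k}a_ja_k-2\sum_{j=2}^{m-1}a_j\min(\mathrm{D}_j,\mathrm{U}_j).
\]
The case $m=2$ is immediate: there are no non-extremal elements, so $\ind\mathfrak{g}_1=a_1a_2$.

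A useful observation is that the restriction $p\notin\mathrm{Ext}$ in Theorem~\ref{thm:indexnil} can be dropped. An extremal element has $\mathrm{D}=0$ or $\mathrm{U}=0$, so its $\min$ term is zero anyway. Summing over all elements therefore lets me compare the two posets layer by layer without worrying about which layers are extremal. This matters because, for example, the layer $a_{m-1}$ becomes maximal in $\mathfrak{g}_2$.

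Write $S=a_2+\cdots+a_{m-1}$.

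\emph{Case $a_1=a_m=c$.} Compare with $\mathcal{P}(a_2,\hdots,a_{m-1})$.
\begin{itemize}
\item The relations of $\mathfrak{g}_1$ not in $\mathfrak{g}_2$ number $a_1(S+a_m)+a_mS=2cS+c^2$.
\item For a middle layer $j$, $\mathrm{D}_j=c+\mathrm{D}'_j$ and $\mathrm{U}_j=c+\mathrm{U}'_j$, where $\mathrm{D}'_j,\mathrm{U}'_j$ are the down/up counts in the smaller poset. So $\min(\mathrm{D}_j,\mathrm{U}_j)=c+\min(\mathrm{D}'_j,\mathrm{U}'_j)$.
\item The correction sum therefore increases by exactly $2cS$.
\end{itemize}
These cancel, leaving $\ind\mathfrak{g}_1=\ind\mathfrak{g}_2+c^2=\ind\mathfrak{g}_2+a_1a_m$.

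\emph{Case $a_1>a_m$.} Compare with $\mathcal{P}(a_1-a_m,a_2,\hdots,a_{m-1})$.
\begin{itemize}
\item The difference in relation counts is $a_1(S+a_m)+a_mS-(a_1-a_m)S=a_1a_m+2a_mS$.
\item For $2\le j\le m-1$, the down-count in $\mathfrak{g}_2$ is $\mathrm{D}_j-a_m$ and the up-count is $\mathrm{U}_j-a_m$.
\item So each $\min$ drops by exactly $a_m$, and the correction sums differ by $2a_mS$.
\end{itemize}
This again yields $\ind\mathfrak{g}_1=\ind\mathfrak{g}_2+a_1a_m$. The degenerate instance $m=3$, where $\mathfrak{g}_2=\mathfrak{g}^\prec(a_1-a_3,a_2)$ has no interior, is covered by the observation above: both sides' formulas are valid when summing over all layers.

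\emph{Case $a_1<a_m$.} This is symmetric. Either redo the computation with $\mathcal{P}(a_2,\hdots,a_m-a_1)$, where now the up-counts drop by $a_1$ and the down-counts drop by $a_1$. Or note that $\mathcal{P}(a_m,\hdots,a_1)$ is the dual poset, and Theorem~\ref{thm:indexnil} is visibly invariant under swapping $\mathrm{D}$ and $\mathrm{U}$, so the index is unchanged under duality.

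The only real obstacle is this bookkeeping of which layers are extremal in $\mathfrak{g}_1$ versus $\mathfrak{g}_2$. It is resolved by extending the sum in Theorem~\ref{thm:indexnil} to all elements, as above.
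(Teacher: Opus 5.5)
Your proof is correct and takes essentially the same approach as the paper. In both, Theorem~\ref{thm:indexnil} is applied to $\mathcal{P}(a_1,\hdots,a_m)$ and to the reduced poset. The relation count grows by $a_1a_m+2a_mS$ (resp.\ $c^2+2cS$), while each non-extremal $\min(\mathrm{D},\mathrm{U})$ shifts by exactly $a_m$ (resp.\ $c$), so the extra terms cancel. Summing over all elements (extremal ones contribute $0$) is a cleaner way to handle the layer $A_{m-1}$, which the paper tracks separately as $\mathcal{Q}_3$. Your duality remark also justifies the paper's ``without loss of generality'' for $a_1<a_m$.
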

\begin{proof}
    If $m=2$ so that $\mathfrak{g}_1=\mathfrak{g}^\prec(a_1,a_2)$, then, applying Theorem~\ref{thm:indexnil}, we find that $\ind \mathfrak{g}_1=a_1a_2$, as desired. Now, assume that $m\ge 3$. There are two cases to consider: $a_1=a_m$ and $a_1\neq a_m$. We consider the case where $a_1\neq a_m$, as the proof for $a_1=a_m$ is similar and simpler. Without loss of generality, assume that $a_1>a_m$. Let $\mathcal{P}_1=\mathcal{P}(a_1,\cdots,a_m)$ and $\mathcal{P}_2=\mathcal{P}(a _1-a_m,\cdots,a_{m-1})$. Then setting $$\mathcal{Q}_1=\{a_m+1,\hdots,a_1\},\quad\mathcal{Q}_2=\left\{a_1+1,\hdots,\sum_{i=1}^{m-2}a_i\right\},\quad\text{and}\quad\mathcal{Q}_3=\left\{1+\sum_{i=1}^{m-2}a_i,\hdots,\sum_{i=1}^{m-1}a_i\right\},$$ we have that $\mathcal{P}_1$ restricted to $\mathcal{Q}_1\cup\mathcal{Q}_2\cup\mathcal{Q}_3$ is isomorphic to $\mathcal{P}_2$ with $\mathcal{Q}_2$ corresponding to $\mathcal{P}_2\backslash\mathrm{Ext}(\mathcal{P}_2)$. Now, note that $$|\mathrm{Rel}(\mathcal{P}_1)|=|\mathrm{Rel}(\mathcal{P}_2)|+a_m(|\mathcal{Q}_2|+|\mathcal{Q}_3|)+a_m|\mathcal{P}_2|+a_m^2$$ and, letting $v(\mathcal{P},p)=\min(D(\mathcal{P},p),U(\mathcal{P},p))$ for $p\in\mathcal{P}$, we have
    \begin{align*}
        2\sum_{p\in\mathcal{P}_1\backslash\mathrm{Ext}(\mathcal{P}_1)}v(\mathcal{P}_1,p)&=2\sum_{p\in\mathcal{Q}_3}v(\mathcal{P}_1,p)+2\sum_{p\in\mathcal{Q}_2}v(\mathcal{P}_1,p)\\
        &=2a_m|\mathcal{Q}_3|+2\sum_{p\in \mathcal{P}_2\backslash\mathrm{Ext}(\mathcal{P}_2)}(v(\mathcal{P}_2,p)+a_m)\\
        &=2a_m|\mathcal{Q}_3|+2a_m|\mathcal{Q}_2|+2\sum_{p\in \mathcal{P}_2\backslash\mathrm{Ext}(\mathcal{P}_2)}v(\mathcal{P}_2,p)\\
        &=2a_m(|\mathcal{Q}_2|+|\mathcal{Q}_3|)+2\sum_{p\in \mathcal{P}_2\backslash\mathrm{Ext}(\mathcal{P}_2)}v(\mathcal{P}_2,p).
    \end{align*}
    Thus, applying Theorem~\ref{thm:indexnil}, we have that
    \begin{align*}
        \ind\mathfrak{g}_1&=|\mathrm{Rel}(\mathcal{P}_1)|-2\sum_{p\in\mathcal{P}_1\backslash\mathrm{Ext}(\mathcal{P}_1)}v(\mathcal{P}_1,p)\\
        &=|\mathrm{Rel}(\mathcal{P}_2)|+a_m(|\mathcal{Q}_2|+|\mathcal{Q}_3|)+a_m|\mathcal{P}_2|+a_m^2-\left(2a_m(|\mathcal{Q}_2|+|\mathcal{Q}_3|)+2\sum_{p\in \mathcal{P}_2\backslash\mathrm{Ext}(\mathcal{P}_2)}v(\mathcal{P}_2,p)\right)\\
        &=\left(|\mathrm{Rel}(\mathcal{P}_2)|-2\sum_{p\in \mathcal{P}_2\backslash\mathrm{Ext}(\mathcal{P}_2)}v(\mathcal{P}_2,p)\right)+a_m(|\mathcal{P}_2|-|\mathcal{Q}_2|-|\mathcal{Q}_3|)+a_m^2\\
        &=\ind\mathfrak{g}_2+a_m|\mathcal{Q}_1|+a_m^2=\ind\mathfrak{g}_2+a_m(a_1-a_m)+a_m^2=\ind\mathfrak{g}_2+a_1a_m
    \end{align*}
    as desired. 
\end{proof}

\begin{prop}\label{prop:parind}
    Let $\mathfrak{s}=\mathfrak{p}\frac{a_1|\cdots|a_m}{N}$ or $\mathfrak{p}\frac{N}{a_m|\cdots|a_1}$. If $\mathrm{E}(\mathfrak{s})$ denotes the collection of edges in $\mathrm{M}_\mathfrak{n}(\mathfrak{s})$, then $$\ind\mathfrak{g}^\prec(\mathcal{P}_\mathfrak{s})=\sum_{e\in \mathrm{E}(\mathfrak{s})}\mathrm{wt}(e).$$
\end{prop}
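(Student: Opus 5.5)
By Corollary~\ref{cor:parbposet}, in either case $\mathfrak{g}^\prec(\mathcal{P}_\mathfrak{s})\cong\mathfrak{g}^\prec(a_1,\hdots,a_m)$. It therefore suffices to show that the total edge weight of $\mathrm{M}_\mathfrak{n}(\mathfrak{s})$ obeys the same recursion as Lemma~\ref{lem:recur}.

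We treat $\mathfrak{s}=\mathfrak{p}\frac{a_1|\cdots|a_m}{N}$; the other case is the mirror image, obtained by reflecting the meander top-to-bottom and left-to-right, which turns $(a_m,\hdots,a_1)$ into $(a_1,\hdots,a_m)$ and does not change the weights. Here the top blocks are the $a_i$-blocks $A_1,\hdots,A_m$, and each contains only one bottom block, namely the intersection of $A_i$ with the single bottom block $[N]$. So by rule $(a)$ every top edge has weight $0$. All the weight sits on the bottom edges of the single bottom block $B=[N]$, partitioned into top blocks $V_i=A_i$.

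We argue by strong induction on $N$, or equivalently on $m$.

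If $m=1$, then $\mathcal{P}_\mathfrak{s}$ is an antichain, $\mathfrak{g}^\prec$ is zero, and rule $(a)$ gives weight $0$, so the claim holds.

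If $m=2$, rule $(b)$ applies once with $M=\max(a_1,a_2)$ and $m'=\min(a_1,a_2)$. It puts weight $M$ on the $m'$ outermost bottom edges, and these join the first $m'$ vertices of $A_1$ to the last $m'$ vertices of $A_2$, as nesting requires. The remaining vertices all lie in one $V'_i$, since the smaller block is used up, so their edges get weight $0$. The total is $Mm'=a_1a_2$, which matches Lemma~\ref{lem:recur}.

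If $m\ge 3$, rule $(b)$ again applies first. Say $a_1\ge a_m$. Then the $a_m$ outermost bottom edges each get weight $a_1$, contributing $a_1a_m$. The process then continues with $V_1',\hdots,V_\ell'$, whose sizes are $(a_1-a_m,a_2,\hdots,a_{m-1})$, with the first term dropped if it is $0$.

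The key observation is that the remaining bottom edges of $B$ are exactly the bottom edges of the meander on the contiguous vertex set $\bigcup V_i'$. That meander has a single bottom block, of size $N-2a_m$, and top blocks $V_1',\hdots,V_\ell'$. So the rest of the weighting is precisely the weighting process for $\mathfrak{s}'=\mathfrak{p}\frac{a_1-a_m|a_2|\cdots|a_{m-1}}{N-2a_m}$ (suitably modified when $a_1=a_m$, and symmetrically when $a_1<a_m$). The extra top edges of $\mathfrak{s}'$ versus $\mathfrak{s}$ all have weight $0$ in either case, so they do not affect the total.

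By induction, the remaining weight equals $\ind\mathfrak{g}^\prec$ of the reduced composition. That is exactly $\ind\mathfrak{g}_2$ in Lemma~\ref{lem:recur}, so the total weight is $\ind\mathfrak{g}_2+a_1a_m=\ind\mathfrak{g}_1$.

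The main obstacle is justifying the key observation: that after the outer $\min$ bottom edges are removed, the nested structure of the remaining bottom edges, together with the surviving top-block boundaries, reproduces exactly the $\mathrm{M}_\mathfrak{n}$ weighting of the smaller parabolic seaweed. The reason it should hold is that bottom edges in a single block are nested, pairing vertex $j$ with vertex $N+1-j$. Removing the first $a_m$ and the last $a_m$ vertices therefore leaves the nested pairing on the interval $[a_m+1,N-a_m]$, which is again a full bottom block. The top-block cuts inside this interval give the reduced composition. Some care is needed to check that this reduced composition is exactly the one in Lemma~\ref{lem:recur} in all three cases $a_1=a_m$, $a_1>a_m$, and $a_1<a_m$, including the degenerate case where the reduced composition has length $1$ or $2$.
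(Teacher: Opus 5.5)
Your proposal is correct and follows essentially the same route as the paper: induction on $m$, peeling off the $\min(a_1,a_m)$ outermost bottom edges (each weighted $\max(a_1,a_m)$), recognizing the remainder as $\mathrm{M}_\mathfrak{n}$ of the smaller parabolic seaweed up to top edges of weight $0$, and closing with Lemma~\ref{lem:recur}. You also supply details the paper leaves as ``without loss of generality'' and ``by construction'': the reflection argument for $\mathfrak{p}\frac{N}{a_m|\cdots|a_1}$, and the check that the nested bottom edges restrict correctly to the interval $[a_m+1,N-a_m]$.
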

\begin{proof}
    Without loss of generality, assume that $\mathfrak{s}=\mathfrak{p}\frac{a_1|\cdots|a_m}{N}$. We prove the result holds by induction on $m$. For the base case, note that when $m=1$ we have that $\mathfrak{g}^\prec(\mathcal{P}_\mathfrak{s})=\{0\}$ and all edges of $\mathrm{M}_\mathfrak{n}(\mathfrak{s})$ have weight 0, i.e., $$\ind\mathfrak{g}^\prec(\mathcal{P}_\mathfrak{s})=0=\sum_{e\in \mathrm{E}(\mathfrak{s})}\mathrm{wt}(e).$$ Assume the result holds for $m-1\ge 0$. Take $\mathfrak{s}=\mathfrak{p}\frac{a_1|\cdots|a_m}{N}$ so that $\mathfrak{g}^{\prec}(\mathcal{P}_\mathfrak{s})\cong\mathfrak{g}^{\prec}(a_1,\hdots,a_m)$ with $\sum_{i=1}^ma_i=N$ by Corollary~\ref{cor:parbposet}. There are two cases. In both, we ignore vertex labels of meanders under discussion.
    \bigskip

    \noindent
    \textbf{Case 1:} $a_1=a_m$. Assume that $m>2$, the argument for $m=2$ being similar and simpler. In this case, by construction, the $a_1$ bottom edges connecting the first to the last $a_1$ vertices in $\mathrm{M}_\mathfrak{n}(\mathfrak{s})$ are all weighted $a_1$. Moreover, the removal of the vertices connected by these edges in $\mathrm{M}_\mathfrak{n}(\mathfrak{s})$ along with all associated top edges results in $\mathrm{M}_\mathfrak{n}(\mathfrak{s}')$ where $\mathfrak{s}'\cong \mathfrak{p}\frac{a_2|\cdots|a_{m-1}}{N-2a_1}$. Note that $\mathfrak{g}^{\prec}(\mathcal{P}_{\mathfrak{s}'})\cong\mathfrak{g}^\prec(a_2,\hdots,a_{m-1})$ by Corollary~\ref{cor:parbposet}. Thus, applying our induction hypothesis along with Lemma~\ref{lem:recur}, we find that $$\sum_{e\in \mathrm{E}(\mathfrak{s})}\mathrm{wt}(e)=a_1^2+\sum_{e\in \mathrm{E}(\mathfrak{s}')}\mathrm{wt}(e)\\
        =a_1^2+\ind\mathfrak{g}^\prec(\mathcal{P}_{\mathfrak{s}'})\\
        =\ind\mathfrak{g}^\prec(\mathcal{P}_{\mathfrak{s}}),$$
    as desired.
    \bigskip

    \noindent
    \textbf{Case 2:} $a_1>a_m$ or $a_1<a_m$. Without loss of generality, assume that $a_1>a_m$. In addition, assume that $m>2$, the argument for $m=2$ being similar and simpler. In this case, by construction, the $a_m$ bottom edges connecting the first to the last $a_m$ vertices in $\mathrm{M}_\mathfrak{n}(\mathfrak{s})$ are all weighted $a_1$. Moreover, we can form $\mathrm{M}_\mathfrak{n}(\mathfrak{s}')$ where $\mathfrak{s}'\cong \mathfrak{p}\frac{a_1-a_m|\cdots|a_{m-1}}{N-2a_m}$ from $\mathrm{M}_\mathfrak{n}(\mathfrak{s})$ by
    \begin{itemize}
        \item removing the aforementioned $a_m$ bottom edges along with any top edges connecting the first $a_1$ vertices as well as the last $a_m$,
        \item removing the first and last $a_m$ vertices,
        \item and then adding a top edge weighted by $0$ between the $i^{th}$ and $(a_1-a_m-i+1)^{th}$ vertex for $1\le i\le \lfloor\frac{a_1-a_m}{2}\rfloor$ assuming that the vertices are distinct.
    \end{itemize}
    Note that $\mathfrak{g}^{\prec}(\mathcal{P}_{\mathfrak{s}'})\cong\mathfrak{g}^\prec(a_1-a_m,\hdots,a_{m-1})$ by Corollary~\ref{cor:parbposet}. Thus, applying our induction hypothesis along with Lemma~\ref{lem:recur}, we find that $$\sum_{e\in \mathrm{E}(\mathfrak{s})}\mathrm{wt}(e)=a_1a_m+\sum_{e\in \mathrm{E}(\mathfrak{s}')}\mathrm{wt}(e)\\
        =a_1a_m+\ind\mathfrak{g}^\prec(\mathcal{P}_{\mathfrak{s}'})\\
        =\ind\mathfrak{g}^\prec(\mathcal{P}_{\mathfrak{s}}),$$
    as desired.
\end{proof}

\begin{lemma}\label{lem:paraenough}
    Let $\mathcal{P}$ be a poset and $\mathcal{P}_1,\mathcal{P}_2\subseteq\mathcal{P}$ satisfy
    \begin{enumerate}
        \item[$(1)$] $\mathcal{P}_1\cup\mathcal{P}_2=\mathcal{P}$;
        \item[$(2)$] $\mathcal{P}_1\cap\mathcal{P}_2\subseteq\mathrm{Min}(\mathcal{P})$ or  $\mathcal{P}_1\cap\mathcal{P}_2\subseteq\mathrm{Max}(\mathcal{P})$; and
        \item[$(3)$] if $x\in\mathcal{P}_1$ and $y\in\mathcal{P}_2$ satisfy $x\prec y$ or $y\prec x$, then either $x\in\mathcal{P}_1\cap\mathcal{P}_2$ or $y\in\mathcal{P}_1\cap\mathcal{P}_2$.
    \end{enumerate}
    Then $\mathfrak{g}^{\prec}(\mathcal{P})\cong \mathfrak{g}^{\prec}(\mathcal{P}_1)\oplus \mathfrak{g}^{\prec}(\mathcal{P}_2)$.
\end{lemma}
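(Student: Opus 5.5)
The plan is to realize $\mathfrak{g}^{\prec}(\mathcal{P}_1)$ and $\mathfrak{g}^{\prec}(\mathcal{P}_2)$ inside $\mathfrak{g}^{\prec}(\mathcal{P})$ and show that they give an internal direct sum of ideals. Throughout, $\mathcal{P}_1$ and $\mathcal{P}_2$ carry the order induced from $\mathcal{P}$. For $i=1,2$, let $\mathfrak{h}_i=\mathrm{span}\{E_{x,y}~|~x,y\in\mathcal{P}_i,~x\prec y\}\subseteq\mathfrak{g}^{\prec}(\mathcal{P})$. Since $\mathcal{P}_i$ is an induced subposet, $\mathfrak{h}_i$ is closed under the bracket. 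Relabelling the elements of $\mathcal{P}_i$ as $[|\mathcal{P}_i|]$ then gives $\mathfrak{h}_i\cong\mathfrak{g}^{\prec}(\mathcal{P}_i)$. It remains to show that $\mathfrak{g}^{\prec}(\mathcal{P})=\mathfrak{h}_1\oplus\mathfrak{h}_2$ as vector spaces and that $[\mathfrak{h}_1,\mathfrak{h}_2]=0$.

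\textbf{Step 1: the basis splits.} We show $\mathrm{Rel}(\mathcal{P})=\mathrm{Rel}(\mathcal{P}_1)\sqcup\mathrm{Rel}(\mathcal{P}_2)$. Take $x\prec y$ in $\mathcal{P}$. If $x$ and $y$ both lie in some $\mathcal{P}_i$, the relation belongs to that $\mathrm{Rel}(\mathcal{P}_i)$. Otherwise, by (1), one of them lies in $\mathcal{P}_1$ and the other in $\mathcal{P}_2$. Condition (3) then puts one of them in $\mathcal{P}_1\cap\mathcal{P}_2$, so both lie in a common $\mathcal{P}_i$. For disjointness, suppose a relation $x\prec y$ lies in both $\mathrm{Rel}(\mathcal{P}_1)$ and $\mathrm{Rel}(\mathcal{P}_2)$. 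Then $x,y\in\mathcal{P}_1\cap\mathcal{P}_2$. By (2) both are minimal or both are maximal in $\mathcal{P}$, which is impossible when $x\prec y$. Hence the basis $\{E_{x,y}~|~x\prec y\}$ of $\mathfrak{g}^{\prec}(\mathcal{P})$ is partitioned between $\mathfrak{h}_1$ and $\mathfrak{h}_2$, which gives the vector space direct sum.

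\textbf{Step 2: the summands commute.} This is the only point with real content. Take $x\prec y$ in $\mathcal{P}_1$ and $z\prec w$ in $\mathcal{P}_2$. Then $[E_{x,y},E_{z,w}]=\delta_{y,z}E_{x,w}-\delta_{w,x}E_{z,y}$.
\begin{itemize}
\item If $y=z$, then $y\in\mathcal{P}_1\cap\mathcal{P}_2$ and $x\prec y\prec w$. So $y$ is neither minimal nor maximal in $\mathcal{P}$, contradicting (2).
\item If $w=x$, then $x\in\mathcal{P}_1\cap\mathcal{P}_2$ and $z\prec x\prec y$, which gives the same contradiction.
\end{itemize}
So every such bracket vanishes. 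Each $\mathfrak{h}_i$ is therefore an ideal, and $\mathfrak{g}^{\prec}(\mathcal{P})\cong\mathfrak{g}^{\prec}(\mathcal{P}_1)\oplus\mathfrak{g}^{\prec}(\mathcal{P}_2)$ as Lie algebras.

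I do not expect a serious obstacle. The one point to handle carefully is that (2) is used twice: once to make the two relation sets disjoint in Step 1, and once to kill the cross brackets in Step 2.
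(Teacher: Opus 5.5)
Your proof is correct and follows the paper's argument: the paper also takes the vector-space splitting for granted and shows that cross brackets vanish. It does so by observing that, by (2), an index shared between $E_{x_1,y_1}\in\mathfrak{g}^{\prec}(\mathcal{P}_1)$ and $E_{x_2,y_2}\in\mathfrak{g}^{\prec}(\mathcal{P}_2)$ can only arise as $x_1=x_2$ or $y_1=y_2$, which is what your Step 2 checks. Your Step 1 supplies the detail the paper calls ``evident'': (1) and (3) show every relation lies in one of the pieces, and (2) shows no relation lies in both.
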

\begin{proof}
    Evidently, the result holds at the vector space level. As for the Lie structure, note that, by assumptions $(2)$ and $(3)$, if $E_{x_1,y_1}\in \mathfrak{g}^{\prec}(\mathcal{P}_1)$ and $E_{x_2,y_2}\in \mathfrak{g}^{\prec}(\mathcal{P}_2)$, then $\{x_1,y_1\}\cap\{x_2,y_2\}=0$ or $1$. Moreover, if $\{x_1,y_1\}\cap\{x_2,y_2\}=1$, then either $x_1=x_2$ or $y_1=y_2$. Either way, for all $E_{x_1,y_1}\in \mathfrak{g}^{\prec}(\mathcal{P}_1)$ and $E_{x_2,y_2}\in \mathfrak{g}^{\prec}(\mathcal{P}_2)$, it follows that $[E_{x_1,y_1},E_{x_2,y_2}]=0$.
\end{proof}

With Proposition~\ref{prop:parind} and Lemma~\ref{lem:paraenough}, we can now prove Theorem~\ref{thm:indpo} and, consequently, Theorem~\ref{thm:indmain}.

\begin{proof}[Proof of Theorem~\ref{thm:indpo}]
    As noted in Section~\ref{sec:posetalg}, identifying $i$ with $v_i$ for $i\in\mathcal{P}_\mathfrak{s}$, the connected components of $\mathcal{P}_\mathfrak{s}$ are in bijection with the central components of $\mathrm{M}_\mathfrak{n}(\mathfrak{s})$. Consequently, letting $\mathcal{Q}_1,\hdots,\mathcal{Q}_n$ denote the connected components of $\mathcal{P}_\mathfrak{s}$, there exist seaweed algebras $\mathfrak{s}_i$ for $i\in [n]$ satisfying $\mathcal{Q}_i\cong\mathcal{P}_{\mathfrak{s}_i}$ and, ignoring vertex labels, $\mathrm{M}_\mathfrak{n}(\mathfrak{s}_i)$ is the central component of $\mathrm{M}_\mathfrak{n}(\mathfrak{s})$ corresponding to $\mathcal{Q}_i$. In particular, for $i\in [n]$, the seaweed algebra $\mathfrak{s}_i$ is defined by the compositions given by the sizes of the top and bottom blocks of the central component associated with $\mathcal{Q}_i$. Thus, applying Lemma~\ref{lem:paraenough}, it follows that $\mathfrak{g}^\prec(\mathcal{P}_\mathfrak{s})\cong \mathfrak{g}^\prec(\mathcal{P}_{\mathfrak{s}_1})\oplus\cdots\oplus \mathfrak{g}^\prec(\mathcal{P}_{\mathfrak{s}_n})$. Moreover, since there are no edges between distinct central components of $\mathrm{M}_\mathfrak{n}(\mathfrak{s})$ and $\ind\mathfrak{g}_1\oplus\mathfrak{g}_2=\ind\mathfrak{g}_1+\ind\mathfrak{g}_2$ for Lie algebras $\mathfrak{g}_1$ and $\mathfrak{g}_2$, we find that it suffices to establish the result for $\mathcal{P}_\mathfrak{s}$ connected. In this case, as discussed in Section~\ref{sec:posetalg}, the poset $\mathcal{P}_\mathfrak{s}$ is either of the form  
    \begin{enumerate}
        \item[$(a)$] $\mathrm{in}(\mathcal{P}_1,\hdots,\mathcal{P}_\ell)$ where $\mathcal{P}_j\cong\mathcal{P}(a_{1,j},\hdots,a_{m_j,j})$ for $j\in [\ell]$, $a_{1,j}=a_{1,j+1}$ for $j\in [\ell-1]$ odd, and $a_{m_j,j}=a_{m_{j+1},j+1}$ for $j\in [\ell-1]$ even; or
        \item[$(b)$] $\mathrm{out}(\mathcal{P}_1,\hdots,\mathcal{P}_\ell)$ where $\mathcal{P}_j\cong\mathcal{P}(a_{1,j},\hdots,a_{m_j,j})$ for $j\in [\ell]$, $a_{1,j}=a_{1,j+1}$ for $j\in [\ell-1]$ even, and $a_{m_j,j}=a_{m_{j+1},j+1}$ for $j\in [\ell-1]$ odd.
    \end{enumerate}
    We prove the result by induction on $\ell$. If $\ell=1$, then $\mathfrak{s}=\mathfrak{p}\frac{a_{1,1}|\cdots|a_{m_1,1}}{N}$ or $\mathfrak{p}\frac{N}{a_{m_1,1}|\cdots|a_{1,1}}$ and the result follows by Proposition~\ref{prop:parind}. So assume that the result holds for $\ell-1\ge 1$. Take $\mathcal{P}_\mathfrak{s}=\mathrm{in}(\mathcal{P}_1,\hdots,\mathcal{P}_\ell)$, the case of $\mathcal{P}_\mathfrak{s}=\mathrm{out}(\mathcal{P}_1,\hdots,\mathcal{P}_\ell)$ following by a similar argument. Considering the definition of $\mathrm{M}_\mathfrak{p}(\mathfrak{s})$, we find that if $\mathfrak{s}=\mathfrak{p}\frac{a_1|\cdots|a_m}{b_1|\cdots|b_n}$, then $b_i=a_{i,1}$ for $i\in [m_1]$ and $a_1=\sum_{i=1}^{m_1}a_{i,1}$. Moreover, letting $\mathfrak{s}_1=\mathfrak{p}\frac{a_1}{a_{1,1}|\cdots|a_{m_1,1}}$ and $\mathfrak{s}_2=\mathfrak{p}\frac{a_{m_1,1}|a_2|\cdots|a_m}{a_{m_1,1}|b_{m_1+1}|\cdots|b_n}$, we have that $\mathcal{P}_{\mathfrak{s}_1}\cong\mathcal{P}_1$ and $\mathcal{P}_{\mathfrak{s}_2}\cong\mathrm{out}(\mathcal{P}_2,\hdots,\mathcal{P}_\ell)$. Now, note that one forms $\mathrm{M}_\mathfrak{n}(\mathfrak{s})$ from $\mathrm{M}_\mathfrak{n}(\mathfrak{s}_1)$ and $\mathrm{M}_\mathfrak{n}(\mathfrak{s}_2)$ by removing the bottom edges connecting the last $a_{m_1,1}$ vertices of $\mathrm{M}_\mathfrak{n}(\mathfrak{s}_1)$ along with the bottom edges connecting the first $a_{m_1,1}$ vertices of $\mathrm{M}_\mathfrak{n}(\mathfrak{s}_2)$, all of which have weight 0, and then identifying these vertices of $\mathrm{M}_\mathfrak{n}(\mathfrak{s}_1)$ and $\mathrm{M}_\mathfrak{n}(\mathfrak{s}_2)$. Consequently, there is a weight-preserving bijection between the edges with nonzero weight in $\mathrm{M}_\mathfrak{n}(\mathfrak{s}_1)$ and $\mathrm{M}_\mathfrak{n}(\mathfrak{s}_2)$ with those in $\mathrm{M}_\mathfrak{n}(\mathfrak{s})$. Thus, since $\ind\mathfrak{g}^\prec(\mathcal{P}_\mathfrak{s})=\ind\mathfrak{g}^\prec(\mathcal{P}_{\mathfrak{s}_1})+\ind\mathfrak{g}^\prec(\mathcal{P}_{\mathfrak{s}_2})$ by Lemma~\ref{lem:paraenough}, applying our induction hypothesis the result follows.
\end{proof}

As corollaries to Theorem~\ref{thm:indmain}, we get the following nilradical analogues of the index formulas for $\mathfrak{p}^A\frac{a|b}{N}$ as well as $\mathfrak{p}^A\frac{a|b}{c|d}$, and $\mathfrak{p}^A\frac{a|b|c}{N}$ provided by Theorems 6 and 8, respectively, of \cite{Coll2}. The details of the proofs are left to the interested reader.

\begin{corollary}~\label{lem:mpind}
    \begin{enumerate}
        \item[$(a)$] If $\mathfrak{s}=\mathfrak{p}^A\frac{a|b}{N}$ or $\mathfrak{p}^A\frac{a|b}{c|d}$, then $\ind \mathfrak{n}(\mathfrak{s})=ab$.
        \item[$(b)$] If $\mathfrak{s}=\mathfrak{p}^A\frac{a|b|c}{N}$, then $\ind \mathfrak{n}(\mathfrak{s})=ac+b|a-c|$.
    \end{enumerate}
    
\end{corollary}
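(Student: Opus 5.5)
The plan is to read off both formulas from Theorem~\ref{thm:indmain}, or equivalently from Lemma~\ref{lem:indcent} together with Theorem~\ref{thm:indpo}. For each family we count $|\mathrm{Cen}(\mathfrak{s})|$ using Proposition~\ref{prop:cccom} and compute the edge weights of $\mathrm{M}_\mathfrak{n}(\mathfrak{s})$ directly from the weighting rules $(a)$ and $(b)$. Since all algebras here are type-A, we subtract one at the end.

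\textbf{Part (a), $\mathfrak{s}=\mathfrak{p}^A\frac{a|b}{N}$.} There is a single bottom block, so $\mathrm{PS}(\mathbf{a})\cap\mathrm{PS}(\mathbf{b})=\{N\}$ and $|\mathrm{Cen}(\mathfrak{s})|=1$. The central term is therefore $0$.
\begin{itemize}
\item Each top block lies inside the single bottom block, so rule $(a)$ gives all top edges weight $0$.
\item The unique bottom block is split by the top blocks into $V_1,V_2$ with $|V_1|=a$ and $|V_2|=b$. Rule $(b)$ weights the $\min(a,b)$ outermost bottom edges by $\max(a,b)$.
\item After this step only one of $V_1',V_2'$ is nonempty, so rule $(a)$ gives all remaining bottom edges weight $0$.
\end{itemize}
The total weight is $\min(a,b)\max(a,b)=ab$. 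As a cross-check, Corollary~\ref{cor:parbposet} gives $\mathfrak{g}^\prec(\mathcal{P}_\mathfrak{s})\cong\mathfrak{g}^\prec(a,b)$, and Lemma~\ref{lem:recur} with $m=2$ gives index $ab$ directly.

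\textbf{Part (a), $\mathfrak{s}=\mathfrak{p}^A\frac{a|b}{c|d}$.} Here I would avoid a bare meander computation. Instead, use the decomposition from the proof of Theorem~\ref{thm:indpo}:
\begin{itemize}
\item Read off $\mathcal{P}_\mathfrak{s}$ from $\mathrm{M}_\mathfrak{p}(\mathfrak{s})$. It is an $\mathrm{in}$/$\mathrm{out}$ gluing of two posets of the form $\mathcal{P}(\cdot,\cdot)$, glued along a common block of extremal elements.
\item Apply Lemma~\ref{lem:paraenough} to split $\mathfrak{g}^\prec(\mathcal{P}_\mathfrak{s})$ as a direct sum.
\item Evaluate each summand by Lemma~\ref{lem:recur}.
\item Add $|\mathrm{Cen}(\mathfrak{s})|-1$, which is $0$ when $a\neq c$.
\end{itemize}
Assuming without loss of generality $a<c$ (by the symmetry of flipping the meander), the top block of size $a$ carries only weight-$0$ edges. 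The bottom block of size $c$ contributes the product of its two top-block pieces, and the top block of size $b$ contributes the product of its two bottom-block pieces. The target is to show that these two contributions combine to $ab$.

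\textbf{Main obstacle.} I expect this four-block case to be the hardest step. The bookkeeping of which block pieces $V_1,V_\ell$ occur, and the relations $a+b=c+d=N$, must be used to collapse the two products into the single product $ab$. The subcase $a=c$ needs separate treatment, because there $|\mathrm{Cen}(\mathfrak{s})|=2$. I would carry this case out very explicitly, checking against small instances such as $a=1,b=2,c=2,d=1$, before writing the general identity.

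\textbf{Part (b), $\mathfrak{s}=\mathfrak{p}^A\frac{a|b|c}{N}$.} By Corollary~\ref{cor:parbposet}, $\mathfrak{g}^\prec(\mathcal{P}_\mathfrak{s})\cong\mathfrak{g}^\prec(a,b,c)$, and $|\mathrm{Cen}(\mathfrak{s})|=1$, so the central term is again $0$.
\begin{itemize}
\item Lemma~\ref{lem:recur} with $m=3$ gives index $ac+\ind\mathfrak{g}_2$.
\item If $a=c$, then $\mathfrak{g}_2=\mathfrak{g}^\prec(b)=0$, and the formula $ac+b|a-c|$ holds trivially.
\item If $a>c$, then $\mathfrak{g}_2=\mathfrak{g}^\prec(a-c,b)$, which has index $(a-c)b$ by the $m=2$ case of Lemma~\ref{lem:recur}. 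The case $a<c$ is symmetric and gives $(c-a)b$.
\end{itemize}
In every case the total is $ac+b|a-c|$. Equivalently, in $\mathrm{M}_\mathfrak{n}(\mathfrak{s})$ the bottom block receives weight $\max(a,c)$ on $\min(a,c)$ edges, then weight $\max(|a-c|,b)$ on $\min(|a-c|,b)$ edges, and these two contributions give the same sum.
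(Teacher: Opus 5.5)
Your arguments for $\mathfrak{p}^A\frac{a|b}{N}$ and for part (b) are correct. The step you flag as the main obstacle cannot be completed, however, because the identity you are aiming for is false; the four-block half of (a) does not hold as stated.

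Take $a<c$, so $d<b$ and $|\mathrm{Cen}(\mathfrak{s})|=1$. Your own bookkeeping is right:
\begin{itemize}
\item the top block of size $a$ contributes $0$;
\item the bottom block of size $c$ splits into pieces of sizes $a$ and $c-a$ and contributes $a(c-a)$;
\item the top block of size $b$ splits into pieces of sizes $c-a$ and $d$ and contributes $(c-a)d$;
\item the last bottom block contributes $0$.
\end{itemize}
Equivalently, $\mathcal{P}_\mathfrak{s}$ consists of the $c-a$ elements $a+1,\dots,c$ lying above the remaining $a+d$ elements, so $\mathfrak{g}^\prec(\mathcal{P}_\mathfrak{s})$ is abelian of dimension $(c-a)(a+d)$. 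Thus $\ind\mathfrak{n}(\mathfrak{s})=(c-a)(a+d)$. Substituting $d=a+b-c$ gives $(c-a)(a+d)-ab=d(c-2a)$, which vanishes only when $c=2a$. Your test case $a=1,b=2,c=2,d=1$ happens to satisfy $c=2a$, which is why it looks consistent. A genuine counterexample is $\mathfrak{p}^A\frac{1|3}{3|1}$: here $\mathfrak{n}(\mathfrak{s})=\mathrm{span}\{E_{1,2},E_{1,3},E_{4,2},E_{4,3}\}$ is abelian, so $\ind\mathfrak{n}(\mathfrak{s})=4\neq 3=ab$.

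The subcase $a=c$ fails too. Then $b=d$, and $\mathfrak{s}$ is the trace-zero part of $\mathfrak{gl}(a)\oplus\mathfrak{gl}(b)$. There is no $E_{p,q}\in\mathfrak{s}$ with $E_{q,p}\notin\mathfrak{s}$, so $\mathfrak{n}(\mathfrak{s})=\mathrm{Z}(\mathfrak{s})$ is one-dimensional and $\ind\mathfrak{n}(\mathfrak{s})=|\mathrm{Cen}(\mathfrak{s})|-1=1$ rather than $ab$ (e.g.\ $\mathfrak{p}^A\frac{2|2}{2|2}$).

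Both computations agree with Theorem~\ref{thm:indmain}, so the defect is in the stated formula, not in your method. Carried through, your approach yields $\ind\mathfrak{n}(\mathfrak{p}^A\frac{a|b}{c|d})=|a-c|\,(N-|a-c|)$ for $a\neq c$, and $1$ for $a=c$. This agrees with $ab$ only in special cases such as $c=2a$ or $a=b=c=d=1$. No rearrangement using $a+b=c+d=N$ can collapse your two products into $ab$, so that part of the proposal should state this corrected value rather than try to prove the claimed one.
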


\begin{remark}
    Note that for a seaweed algebra $\mathfrak{s}$, there is no general relationship between $\ind\mathfrak{n}(\mathfrak{s})$ and $\ind\mathfrak{s}$. To see this, taking $\mathfrak{s}_1=\mathfrak{p}\frac{1|2|1}{4}$ one has $2=\ind\mathfrak{n}(\mathfrak{s}_1)<\ind\mathfrak{s}_1=3,$ while taking $\mathfrak{s}_2=\mathfrak{p}\frac{1|2}{2|1}$ one has $3=\ind\mathfrak{n}(\mathfrak{s}_2)>\ind\mathfrak{s}_2=2.$ Evidently, the same applies to type-A seaweed algebras. 
\end{remark}

Recalling from Section~\ref{sec:prelim} that $\mathrm{E}_1(\mathfrak{s})$ denotes the collection of edges in $\mathrm{M}(\mathfrak{s})$ for which no other edge connects the same two vertices, with respect to the original meander $\mathrm{M}(\mathfrak{s})$, we have the following.

\begin{theorem}\label{thm:Ms}
    If $\mathfrak{s}$ is a seaweed algebra, then $$\ind\mathfrak{n}(\mathfrak{s})\ge |\mathrm{E}_1(\mathfrak{s})|+|\mathrm{Cen}(\mathfrak{s})|.$$ Moreover, if $\mathfrak{s}$ is a type-A seaweed algebra, then $$\ind\mathfrak{n}(\mathfrak{s})\ge |\mathrm{E}_1(\mathfrak{s})|+|\mathrm{Cen}(\mathfrak{s})|-1.$$
\end{theorem}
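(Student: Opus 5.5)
The plan is to reduce the inequality to a purely combinatorial comparison between edge weights and edge counts. By Theorem~\ref{thm:indmain} (equivalently, Lemma~\ref{lem:indcent} together with Theorem~\ref{thm:indpo}), both statements follow once I show
\[
\sum_{e\in \mathrm{E}(\mathfrak{s})}\mathrm{wt}(e)\ \ge\ |\mathrm{E}_1(\mathfrak{s})|
\]
for every seaweed algebra $\mathfrak{s}$. The type-A case then needs nothing extra, since the $-1$ appears on both sides. The edges of $\mathrm{M}_\mathfrak{n}(\mathfrak{s})$ are exactly the edges of $\mathrm{M}(\mathfrak{s})$ carrying weights. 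Both sides are additive over central components, because no edges join distinct central components. So, as in the proof of Theorem~\ref{thm:indpo}, I may assume $\mathcal{P}_\mathfrak{s}$ is connected.

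\textbf{Parabolic case.} I first treat $\mathfrak{s}=\mathfrak{p}\frac{a_1|\cdots|a_m}{N}$, by induction on $m$ following the peeling in the proof of Proposition~\ref{prop:parind}.
\begin{itemize}
\item For $m=1$, every top edge is parallel to a bottom edge, so $|\mathrm{E}_1(\mathfrak{s})|=0$ and the inequality is trivial.
\item For $m\ge 2$ with $a_1\ge a_m$, one step of the peeling removes the $a_m$ outermost bottom edges, of total weight $a_1a_m$. It also removes the top edges on the last $a_m$ vertices, at most $\lfloor a_m/2\rfloor$ of them. Finally, it replaces the top edges of the first block by those of a block of size $a_1-a_m$, which loses at most $\lfloor a_1/2\rfloor-\lfloor (a_1-a_m)/2\rfloor\le\lceil a_m/2\rceil$ edges.
\item The count $|\mathrm{E}_1|$ can therefore drop by at most $a_m+\lfloor a_m/2\rfloor+\lceil a_m/2\rceil=2a_m$ in one step, while the weight drops by exactly $a_1a_m$. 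That is enough once $a_1\ge 2$.
\item When $a_1=a_m=1$, only one bottom edge and no top edges disappear, and the weight drop is $1$.
\item One must also check that an edge which was parallel before the step does not become non-parallel and uncounted afterward. I plan to handle this by showing that after the step each surviving top edge has the same partner as before: both arc systems are shifted by the same amount on the surviving vertices.
\end{itemize}
The case $a_1<a_m$ is symmetric, and $\mathfrak{p}\frac{N}{a_m|\cdots|a_1}$ follows by reflection. The small cases $m=2$, such as $\mathfrak{p}\frac{1|a}{N}$, I would check directly against $a_1a_2$.

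\textbf{General connected case.} Here I run the induction on $\ell$ from the proof of Theorem~\ref{thm:indpo}. There $\mathrm{M}_\mathfrak{n}(\mathfrak{s})$ is obtained from $\mathrm{M}_\mathfrak{n}(\mathfrak{s}_1)$ and $\mathrm{M}_\mathfrak{n}(\mathfrak{s}_2)$ by deleting weight-$0$ bottom edges on the shared $a_{m_1,1}$ vertices and then identifying those vertices. The total weight is unchanged. I then need $|\mathrm{E}_1(\mathfrak{s})|\le|\mathrm{E}_1(\mathfrak{s}_1)|+|\mathrm{E}_1(\mathfrak{s}_2)|$. The deleted bottom edges from the two pieces are the same arcs on the shared block, and they coincide with the top edges of the block of size $a_{m_1,1}$ in $\mathfrak{s}_2$. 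Hence the top edge on the shared block survives in $\mathfrak{s}$ as exactly one copy, and it was already counted as parallel-free or not in one of the pieces.

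\textbf{Main obstacle.} I expect the parallel-edge bookkeeping to be the hardest part, in both the peeling step and the gluing step. Deleting or identifying edges can turn a formerly parallel pair into a single counted edge. For gluing, my first attempt is a local argument on the shared block: an edge that becomes newly counted in $\mathrm{E}_1(\mathfrak{s})$ must be the surviving top edge whose partner was deleted. I would then show that it is compensated, either by the deleted arc from the other piece, which was itself counted in $\mathrm{E}_1(\mathfrak{s}_1)$ or $\mathrm{E}_1(\mathfrak{s}_2)$, or by slack in the parabolic estimate above, where the bound $2a_m$ is far from $a_1a_m$ unless $a_1\le 2$.
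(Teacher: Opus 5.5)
Your plan is essentially the paper's proof. You reduce via Theorem~\ref{thm:indmain} to $\sum_{e}\mathrm{wt}(e)=\ind\mathfrak{g}^\prec(\mathcal{P}_\mathfrak{s})\ge|\mathrm{E}_1(\mathfrak{s})|$, pass to connected $\mathcal{P}_\mathfrak{s}$, settle the parabolic case by the peeling recursion, and close the induction on $\ell$ with $|\mathrm{E}_1(\mathfrak{s})|\le|\mathrm{E}_1(\mathfrak{s}_1)|+|\mathrm{E}_1(\mathfrak{s}_2)|$. In the parabolic case, your weight drop $a_1a_m$ against an $\mathrm{E}_1$-drop of at most $2a_m$ plays the role of the paper's use of Lemma~\ref{lem:recur}, and your edge count is actually more accurate than the paper's displayed identity for $\mathrm{E}_1(\mathfrak{s}_1)$, which is only an upper bound.

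One correction to your gluing picture. In $\mathrm{M}(\mathfrak{s})$ no copy of the shared-block arcs survives: the bottom arcs of the last bottom block of $\mathfrak{s}_1$ and the top arcs of the first top block of $\mathfrak{s}_2$ (both of some size $c$) are both deleted. Those arcs have no parallel partner in their own pieces, and no top block of one piece shares a center with a bottom block of the other. Hence $|\mathrm{E}_1(\mathfrak{s})|=|\mathrm{E}_1(\mathfrak{s}_1)|+|\mathrm{E}_1(\mathfrak{s}_2)|-2\lfloor c/2\rfloor$ exactly, and the compensation argument you anticipate is not needed.
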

\begin{proof}
    Considering Theorem~\ref{thm:indmain}, it suffices to consider the case where $\mathfrak{s}$ is a seaweed algebra. Moreover, since $\mathfrak{n}(\mathfrak{s})\cong \mathfrak{g}^\prec(\mathcal{P}_\mathfrak{s})\oplus\mathrm{Z}(\mathfrak{s})$ by Theorem~\ref{thm:nilposet} and $\ind\mathrm{Z}(\mathfrak{s})=|\mathrm{Cen}(\mathfrak{s})|$, we need only show that $\ind\mathfrak{g}^\prec(\mathcal{P}_\mathfrak{s})\ge |\mathrm{E}_1(\mathfrak{s})|$. Arguing as in the proof of Theorem~\ref{thm:indpo}, if $\mathcal{P}_\mathfrak{s}$ has connected components $\mathcal{Q}_1,\hdots,\mathcal{Q}_n$, then there exist seaweed algebras $\mathfrak{s}_1,\hdots,\mathfrak{s}_n$ such that $\mathcal{Q}_i\cong\mathcal{P}_{\mathfrak{s}_i}$ for $i\in [n]$. Moreover, the central components of $\mathrm{M}(\mathfrak{s})$ are, ignoring vertex labels, exactly the $\mathrm{M}(\mathfrak{s}_i)$ for $i\in [n]$. Consequently, simplifying matters further, applying Lemma~\ref{lem:paraenough}, we need only consider the case where $\mathcal{P}_\mathfrak{s}$ is connected; that is, we may assume that  $\mathcal{P}_\mathfrak{s}$ is either of the form  
    \begin{enumerate}
        \item[$(a)$] $\mathrm{in}(\mathcal{P}_1,\hdots,\mathcal{P}_\ell)$ where $\mathcal{P}_j\cong\mathcal{P}(a_{1,j},\hdots,a_{m_j,j})$ for $j\in [\ell]$, $a_{1,j}=a_{1,j+1}$ for $j\in [\ell-1]$ odd, and $a_{m_j,j}=a_{m_{j+1},j+1}$ for $j\in [\ell-1]$ even; or
        \item[$(b)$] $\mathrm{out}(\mathcal{P}_1,\hdots,\mathcal{P}_\ell)$ where $\mathcal{P}_j\cong\mathcal{P}(a_{1,j},\hdots,a_{m_j,j})$ for $j\in [\ell]$, $a_{1,j}=a_{1,j+1}$ for $j\in [\ell-1]$ even, and $a_{m_j,j}=a_{m_{j+1},j+1}$ for $j\in [\ell-1]$ odd.
    \end{enumerate}
    As in the proof of Theorem~\ref{thm:indpo}, we prove the result by induction on $\ell$. If $\ell=1$, then setting $m=m_1$, $a_i=a_{i,1}$ for $i\in [m]$, and $N=\sum_{i=1}^ma_i$, we have that $\mathfrak{g}^\prec(\mathcal{P}_\mathfrak{s})\cong\mathfrak{g}^{\prec}(a_1,\hdots,a_m)$ and either $\mathfrak{s}=\mathfrak{p}\frac{a_1|\cdots|a_m}{N}$ or $\mathfrak{p}\frac{N}{a_m|\cdots|a_1}$. Without loss of generality, assume that $\mathfrak{s}=\mathfrak{p}\frac{a_1|\cdots|a_m}{N}$. We show that the base case holds by induction on $m$. Note that if $m=1$, then $\ind\mathfrak{g}^\prec(\mathcal{P}_\mathfrak{s})=0$ and $\mathrm{E}_1(\mathfrak{s})$ is empty. For $m=2$, applying Lemma~\ref{lem:mpind}, we have that $$\ind\mathfrak{g}^\prec(\mathcal{P}_\mathfrak{s})=\ind\mathfrak{g}^\prec(a_1,a_2)=a_1a_2\ge\left\lfloor\frac{a_1}{2}\right\rfloor+\left\lfloor\frac{a_2}{2}\right\rfloor+\left\lfloor\frac{a_1+a_2}{2}\right\rfloor=E_1(\mathfrak{s}).$$ Assume the result holds for $m-1\ge 2$. Let $\mathfrak{g}_1=\mathfrak{g}^\prec(\mathcal{P}_\mathfrak{s})=\mathfrak{g}^\prec(a_1,\hdots,a_m)$. Without loss of generality, assume that $a_1\ge a_m$. Define $$\mathfrak{s}_2=\begin{cases}
        \mathfrak{p}\frac{a_2|\cdots|a_{m-1}}{N-2a_1}, & a_1=a_m \\
        \\
        \mathfrak{p}\frac{a_1-a_m|a_2|\cdots|a_{m-1}}{N-2a_m}, & a_1>a_m
    \end{cases}\qquad\text{and}\qquad \mathfrak{g}_2=\begin{cases}
        \mathfrak{g}^\prec(a_2,\hdots,a_{m-1}), & a_1=a_m \\
        \\
        \mathfrak{g}^\prec(a_1-a_m,a_2,\hdots,a_{m-1}), & a_1>a_m.
    \end{cases}$$
    Note that $$\mathrm{E}_1(\mathfrak{s}_1)=\mathrm{E}_1(\mathfrak{s}_2)+\left\lfloor\frac{a_1}{2}\right\rfloor+\left\lfloor\frac{a_m}{2}\right\rfloor+a_m.$$ Now, applying Lemma~\ref{lem:recur} along with the induction hypothesis on $m$, we have
    \begin{align*}
        \ind\mathfrak{g}_1=\ind\mathfrak{g}_2+a_1a_m\ge \mathrm{E}_1(\mathfrak{s}_2)+a_1a_m&\ge \mathrm{E}_1(\mathfrak{s}_2)+\left\lfloor\frac{a_1}{2}\right\rfloor+\left\lfloor\frac{a_m}{2}\right\rfloor+\left\lfloor\frac{a_1+a_m}{2}\right\rfloor\\
        &\ge \mathrm{E}_1(\mathfrak{s}_2)+\left\lfloor\frac{a_1}{2}\right\rfloor+\left\lfloor\frac{a_m}{2}\right\rfloor+a_m=\mathrm{E}_1(\mathfrak{s}_1),
    \end{align*}
    as desired. Thus, the base case holds. Now, assume that the result holds for $\ell-1\ge 1$. Take $\mathcal{P}_\mathfrak{s}=\mathrm{in}(\mathcal{P}_1,\hdots,\mathcal{P}_\ell)$, the case of $\mathcal{P}_\mathfrak{s}=\mathrm{out}(\mathcal{P}_1,\hdots,\mathcal{P}_\ell)$ following by a similar argument. Arguing as in the proof of Theorem~\ref{thm:indpo}, there exist seaweed algebras $\mathfrak{s}_1$ and $\mathfrak{s}_2$ such that $\mathcal{P}_{\mathfrak{s}_1}\cong\mathcal{P}_1$ and $\mathcal{P}_{\mathfrak{s}_2}\cong\mathrm{out}(\mathcal{P}_2,\hdots,\mathcal{P}_\ell)$. Moreover, $\mathrm{M}(\mathfrak{s})$ is formed from $\mathrm{M}(\mathfrak{s}_1)$ and $\mathrm{M}(\mathfrak{s}_2)$ by removing the edges below connecting the last $a_{m_1,1}$ vertices of $\mathrm{M}(\mathfrak{s}_1)$ along with the edges below connecting the first $a_{m_1,1}$ vertices of $\mathrm{M}(\mathfrak{s}_2)$ and then identifying these vertices of $\mathrm{M}(\mathfrak{s}_1)$ and $\mathrm{M}(\mathfrak{s}_2)$. Consequently, $|\mathrm{E}_1(\mathfrak{s})|\le |\mathrm{E}_1(\mathfrak{s}_1)|+|\mathrm{E}_1(\mathfrak{s}_2)|$. Therefore, since $\ind\mathfrak{g}^\prec(\mathcal{P}_\mathfrak{s})=\ind\mathfrak{g}^\prec(\mathcal{P}_{\mathfrak{s}_1})+\ind\mathfrak{g}^\prec(\mathcal{P}_{\mathfrak{s}_2})$ by Lemma~\ref{lem:mpind}, applying the induction hypothesis on $\ell$ the result follows.
\end{proof}

Interestingly, the bound given in Theorem~\ref{thm:Ms} can be tight. Examples are provided by the proposition below.

\begin{prop}
    Let $\mathfrak{s}=\mathfrak{p}\frac{a_1|\hdots|a_n}{b_1|\hdots|b_m}$ or $\mathfrak{p}\frac{a_1|\hdots|a_n}{N}$ where $a_i,b_j\in [2]$ for $i\in [n]$ and $j\in [m]$. Then $$\ind\mathfrak{n}(\mathfrak{s})= |\mathrm{E}_1(\mathfrak{s})|+|\mathrm{Cen}(\mathfrak{s})|.$$ In the case that $\mathfrak{s}$ is a type-A seaweed algebra, we have $$\ind\mathfrak{n}(\mathfrak{s})= |\mathrm{E}_1(\mathfrak{s})|+|\mathrm{Cen}(\mathfrak{s})|-1.$$
\end{prop}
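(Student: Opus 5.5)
By Theorem~\ref{thm:indmain}, it suffices to show that $\sum_{e\in\mathrm{E}(\mathfrak{s})}\mathrm{wt}(e)=|\mathrm{E}_1(\mathfrak{s})|$ when every block has size at most $2$. The type-A statement then follows by subtracting one, as in Lemma~\ref{lem:indcent}. Rather than redo the poset computation, I would read the weights directly off $\mathrm{M}_\mathfrak{n}(\mathfrak{s})$.

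\textbf{Weights of edges inside blocks of size at most $2$.} A block of size $1$ carries no edge. A block $B$ of size $2$ carries exactly one edge $e$, joining its two vertices $v_i,v_{i+1}$. Let $V_1,\dots,V_\ell$ be the partition of $B$ induced by the blocks on the opposite side.
\begin{itemize}
\item[$\bullet$] If $\ell=1$, then both $v_i,v_{i+1}$ lie in one opposite block of size $2$. That block is $\{v_i,v_{i+1}\}$ itself, so $e$ is one of a pair of parallel edges. Rule $(a)$ gives $\mathrm{wt}(e)=0$.
\item[$\bullet$] If $\ell=2$, then $|V_1|=|V_2|=1$, so rule $(b)$ gives $M=m=1$ and $\mathrm{wt}(e)=1$. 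In this case the opposite blocks containing $v_i$ and $v_{i+1}$ are distinct, so no edge parallel to $e$ exists and $e\in\mathrm{E}_1(\mathfrak{s})$.
\end{itemize}
Hence every edge has weight $1$ if it lies in $\mathrm{E}_1(\mathfrak{s})$ and $0$ otherwise. Parallel pairs contribute $0$ twice, which matches their exclusion from $\mathrm{E}_1(\mathfrak{s})$. Summing over edges gives the claim for $\mathfrak{p}\frac{a_1|\cdots|a_n}{b_1|\cdots|b_m}$.

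\textbf{The case $\mathfrak{p}\frac{a_1|\cdots|a_n}{N}$.} Here the bottom block has size $N$, which may exceed $2$, so it needs separate treatment. The top edges are handled as above: the opposite block is the whole bottom block, so $\ell=2$ whenever $a_i=2$, giving weight $1$. Such edges cannot be parallel to a bottom edge except when $N=2$, which is covered directly.

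For the bottom block, I would use Proposition~\ref{prop:parind} together with Lemma~\ref{lem:recur}. By Proposition~\ref{prop:parind} the total weight equals $\ind\mathfrak{g}^\prec(a_1,\dots,a_n)$ with all $a_i\in\{1,2\}$. I would then prove by induction on $n$, via Lemma~\ref{lem:recur}, that
\[
\ind\mathfrak{g}^\prec(a_1,\dots,a_n)=\#\{i: a_i=2\}+\left\lfloor N/2\right\rfloor,
\]
minus the parallel corrections, which is exactly $|\mathrm{E}_1(\mathfrak{s})|$. The recursion removes $a_1a_n$ and reduces the end parts, and one checks that $a_1a_n$ equals the number of $\mathrm{E}_1$ edges removed: the nested bottom edges lost plus the top edges destroyed.

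\textbf{Main obstacle.} I expect this recursion bookkeeping to be the hardest step. When $a_1=2>a_n=1$, the first block becomes a size-$1$ block, so the top edge of weight $1$ disappears along with one bottom edge. The loss is then $2$ edges, matching $a_1a_n=2$. When $a_1=a_n=2$, the recursion removes $4$ from the index, and the edges lost are two nested bottom edges plus two top edges, which is also $4$. I would verify the small cases $n\le 2$ by hand, for instance $\mathfrak{p}\frac{1|2}{3}$, before running the induction.
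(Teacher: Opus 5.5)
Your overall strategy works. For $\mathfrak{p}\frac{a_1|\cdots|a_n}{b_1|\cdots|b_m}$ it is the paper's argument in a cleaner form: a size-$2$ block has $\ell\in\{1,2\}$. When $\ell=1$, its edge is half of a parallel pair and gets weight $0$. When $\ell=2$, rule $(b)$ gives weight $M=m=1$ on an edge of $\mathrm{E}_1(\mathfrak{s})$.

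\textbf{The case $\mathfrak{p}\frac{a_1|\cdots|a_n}{N}$: two false side claims.} Both should be deleted.
\begin{enumerate}
\item[(i)] Top edges have weight $0$, not $1$. Each top block lies inside the single bottom block, so $\ell=1$ and rule $(a)$ applies. All nonzero weight sits on bottom edges: in $\mathfrak{p}\frac{2|1}{3}$ the bottom edge $v_1v_3$ gets weight $2$ and the top edge $v_1v_2$ gets $0$.
\item[(ii)] Parallel edges are not confined to $N=2$. The top edge $v_jv_{j+1}$ coincides with the bottom edge $v_kv_{N+1-k}$ if and only if $k=j=N/2$. So a parallel pair occurs exactly when $N$ is even and $\{N/2,N/2+1\}$ is a top block, e.g.\ $\mathfrak{p}\frac{1|2|1}{4}$.
\end{enumerate}

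Neither error is load-bearing, because you take the \emph{total} weight from Proposition~\ref{prop:parind}. But your ``parallel corrections'' must be made precise: $|\mathrm{E}_1(\mathfrak{s})|=\#\{i:a_i=2\}+\lfloor N/2\rfloor-2\delta$, where $\delta=1$ in the situation of (ii) and $\delta=0$ otherwise.

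You then need one more observation. For $n\ge 3$, the reduction in Lemma~\ref{lem:recur} deletes the first and last $\min(a_1,a_n)$ vertices symmetrically. It never strips a parallel edge, and it leaves the top-block structure at the middle pair unchanged. Hence $\delta$ is preserved, and $|\mathrm{E}_1|$ drops by $1$, $2$, $4=a_1a_n$ in the three cases. With the base cases $n\le 2$ checked by hand, the induction closes.

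\textbf{Comparison with the paper.} This half of your proof takes a different route. The paper stays inside $\mathrm{M}_\mathfrak{n}(\mathfrak{s})$. It notes that top edges have weight $0$ and bottom edges weight $0$, $1$ or $2$, with weight $0$ only off $\mathrm{E}_1(\mathfrak{s})$. It then builds, by a four-case analysis, a bijection between weight-$2$ bottom edges and top edges in $\mathrm{E}_1(\mathfrak{s})$.

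You replace that pairing with the already-proved recursion of Lemma~\ref{lem:recur}, so the check becomes a short edge count. In fact you could drop weights altogether in this case. Theorem~\ref{thm:nilposet}, Lemma~\ref{lem:indcent} and Corollary~\ref{cor:parbposet} give $\ind\mathfrak{n}(\mathfrak{s})=|\mathrm{Cen}(\mathfrak{s})|+\ind\mathfrak{g}^\prec(a_1,\dots,a_n)$ directly. The paper's argument, in exchange, shows where on the meander the weight actually sits.
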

\begin{proof}
    Arguing as in the proof of Theorem~\ref{thm:Ms}, it suffices to establish the result when $\mathfrak{s}$ is a seaweed algebra. Moreover, we need only show that $\ind\mathfrak{g}^\prec(\mathcal{P}_\mathfrak{s})=|\mathrm{E}_1(\mathfrak{s})|$; that is, by Theorem~\ref{thm:indpo}, the sums of the weights of edges in $\mathrm{M}_\mathfrak{n}(\mathfrak{s})$ is equal to $|\mathrm{E}_1(\mathfrak{s})|$. We consider the two cases separately. To start, assume that $\mathfrak{s}=\mathfrak{p}\frac{a_1|\hdots|a_n}{b_1|\hdots|b_m}$ where $a_i,b_j\in [2]$ for $i\in [n]$ and $j\in [m]$. Note that, evidently, each top and bottom block of $\mathrm{M}(\mathfrak{s})$ has size one or two. We claim that all edges of $\mathrm{E}_1(\mathfrak{s})$ have a weight of one in $\mathrm{M}_\mathfrak{n}(\mathfrak{s})$ and all others have weight zero. To see this, given the sizes of blocks, initially it seems that potential weights for edges are zero, one, or two. Now, having an edge of weight two would require the existence of vertices $v_i$ and $v_j$ in $\mathrm{M}_\mathfrak{n}(\mathfrak{s})$ with $i<i+1<j$ for which there exists an edge joining $v_i$ and $v_{i+1}$, as well as an edge joining $v_i$ and $v_j$; but this would imply there exists $a_i$ for $i\in [n]$ with $a_i>2$ or $b_j$ for $j\in [m]$ with $b_j>2$, which is a contradiction. Thus, all edges of $\mathrm{M}_\mathfrak{n}(\mathfrak{s})$ are weighted by zero or one. Now, given that all edges of $\mathrm{M}_\mathfrak{n}(\mathfrak{s})$ connect adjacent vertices under our restrictions, it follows that an edge is weighted by zero if and only if it does not belong to $\mathrm{E}_1(\mathfrak{s})$. Thus, the claim and, consequently, the result, follow.

    Finally, assume that $\mathfrak{s}=\mathfrak{p}\frac{a_1|\hdots|a_n}{N}$ where $a_i\in [2]$ for $i\in [n]$. Considering the definition of $\mathrm{M}_\mathfrak{n}(\mathfrak{s})$, all edges with nonzero weight are bottom edges. Moreover, since the possible sizes of top blocks are one or two, it follows that all such edges have weight zero, one, or two. Note that having weight zero must correspond to an edge not belonging to $\mathrm{E}_1(\mathfrak{s})$. We claim that each bottom edge $e$ with weight two can be paired uniquely with a top edge belonging to $\mathrm{E}_1(\mathfrak{s})$. To see this, let's say that such a bottom edge $e_1$ connects vertices $v_i$ and $v_j$ with $i<j$. There are four possibilities:
    \begin{itemize}
        \item[$(1)$] $v_i$ is adjacent to $v_{i-1}$ and $v_j$ to $v_{j+1}$ by top edges, say $e_2$ and $e_3$, respectively. Note that, $v_{i-1}$ and $v_{j+1}$ are connected by a bottom edge. In this case, pair $e_1$ with $e_3$; 
        \item[$(2)$] $v_i$ is adjacent to $v_{i+1}$ and $v_j$ to $v_{j-1}$ by top edges, say $e_2$ and $e_3$, respectively. Note that, $v_{i+1}$ and $v_{j-1}$ are connected by a bottom edge. In this case, pair $e_1$ with $e_2$;
        \item[$(3)$] $v_i$ is adjacent to $v_{i+1}$ by a top edge, say $e_2$, and $v_j$ is either adjacent to $v_{j+1}$ by a top edge, say $e_3$, or adjacent to no other vertex by a top edge. In this case, pair $e_1$ with $e_2$; and
        \item[$(4)$] $v_j$ is adjacent to $v_{j-1}$ by a top edge, say $e_2$, and $v_i$ is either adjacent to $v_{i-1}$ by a top edge, say $e_3$, or adjacent to no other vertex by a top edge. In this case, pair $e_1$ with $e_2$.
    \end{itemize}
    It is straightforward to check that the pairing given above provides an example of a pairing we claimed to exist. Moreover, given the structure of the bottom edges in $\mathrm{M}_\mathfrak{n}(\mathfrak{s})$, it follows that each top edge belonging to $\mathrm{E}_1(\mathfrak{s})$ must get paired with a bottom edge of weight two. Consequently, this establishes that the sums of the weights of edges in $\mathrm{M}_\mathfrak{n}(\mathfrak{s})$ is equal to $|\mathrm{E}_1(\mathfrak{s})|$. The result follows.
\end{proof}

\begin{corollary}
    Let $\mathfrak{s}$ be a type-A seaweed algebra for which $|\mathrm{Cen}(\mathfrak{s})|=1$ and either $\mathfrak{s}=\mathfrak{p}^A\frac{a_1|\hdots|a_n}{b_1|\hdots|b_m}$ or $\mathfrak{p}^A\frac{a_1|\hdots|a_n}{N}$ where $a_i,b_j\in [2]$ for $i\in [n]$ and $j\in [m]$. Then $\ind\mathfrak{n}(\mathfrak{s})=|\mathrm{E}_1(\mathfrak{s})|.$
\end{corollary}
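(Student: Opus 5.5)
This corollary should follow directly from the preceding proposition once the hypothesis $|\mathrm{Cen}(\mathfrak{s})|=1$ is used. The plan is to apply the type-A half of that proposition to $\mathfrak{s}=\mathfrak{p}^A\frac{a_1|\hdots|a_n}{b_1|\hdots|b_m}$ or $\mathfrak{p}^A\frac{a_1|\hdots|a_n}{N}$ with $a_i,b_j\in[2]$. It gives
$$\ind\mathfrak{n}(\mathfrak{s})=|\mathrm{E}_1(\mathfrak{s})|+|\mathrm{Cen}(\mathfrak{s})|-1,$$
and substituting $|\mathrm{Cen}(\mathfrak{s})|=1$ yields $\ind\mathfrak{n}(\mathfrak{s})=|\mathrm{E}_1(\mathfrak{s})|$.

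Since that proposition is proved in the paper through its $\mathfrak{gl}$ version, I would first check that the type-A reduction is legitimate. By Theorem~\ref{thm:nilposet}, the nilradical decomposes as $\mathfrak{n}(\mathfrak{s})\cong\mathfrak{g}^\prec(\mathcal{P}_\mathfrak{s})\oplus\mathrm{Z}(\mathfrak{s})$. The poset $\mathcal{P}_\mathfrak{s}$ depends only on the compositions $\mathbf{a},\mathbf{b}$, so it is the same for $\mathfrak{p}\frac{\mathbf{a}}{\mathbf{b}}$ and $\mathfrak{p}^A\frac{\mathbf{a}}{\mathbf{b}}$. Hence $\ind\mathfrak{g}^\prec(\mathcal{P}_\mathfrak{s})=|\mathrm{E}_1(\mathfrak{s})|$ holds in the type-A case as well. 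Lemma~\ref{lem:indcent} then gives $\ind\mathrm{Z}(\mathfrak{s})=|\mathrm{Cen}(\mathfrak{s})|-1=0$, because the center of the type-A algebra is trivial when there is a single central component. Adding the two contributions gives the claim.

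There is no real obstacle here. The only thing to confirm is that the shared poset really transfers the $\mathfrak{gl}$ computation of $\ind\mathfrak{g}^\prec(\mathcal{P}_\mathfrak{s})$ unchanged to the $\mathfrak{sl}$ setting, and the observation above settles that.
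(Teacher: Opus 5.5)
Your proposal is correct and follows the paper's route: the corollary is the type-A case of the preceding proposition with $|\mathrm{Cen}(\mathfrak{s})|=1$ substituted. Your extra check also matches how the paper reduces type-A statements to the $\mathfrak{gl}$ case, namely that $\mathcal{P}_\mathfrak{s}$ is the same for $\mathfrak{p}\frac{\mathbf{a}}{\mathbf{b}}$ and $\mathfrak{p}^A\frac{\mathbf{a}}{\mathbf{b}}$ and that $\ind\mathrm{Z}(\mathfrak{s})=|\mathrm{Cen}(\mathfrak{s})|-1=0$.
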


It would be interesting to find a nice characterization of those type-A seaweed algebras $\mathfrak{s}$ for which the index of the nilradical is given by $|\mathrm{E}_1(\mathfrak{s})|$. More generally, it would also be interesting if one could find a combinatorial formula depending only on $\mathrm{M}(\mathfrak{s})$ for the computation of the index of the nilradical of a (type-A) seaweed algebra $\mathfrak{s}$.

\section{Future Directions}\label{sec:fd}
 
%%%%%%%%%%%%%%%%%%%%%%%%%%%%%%%%%%%%%%

\subsection{Beyond type $A$}

In type $A$, we proved that the nilradical of a seaweed subalgebra admits the decomposition $$\mathfrak n(\mathfrak s) \cong Z(\mathfrak s) \oplus \mathfrak g^\prec(P_{\mathfrak s}),$$ where $\mathfrak g^\prec(P_{\mathfrak s})$ is a nilpotent Lie poset algebra determined combinatorially by the modified meander $\mathrm{M}_{\mathfrak p}(\mathfrak s)$. 

Since Lie poset algebras have been defined in the other classical types \cite{tBCD}, it is natural
to ask whether the splitting displayed above persists for seaweed subalgebras beyond type~$A$. Extensive low-rank calculations strongly suggest that this is indeed the case.  At present, however, extending the approach developed here to the remaining classical types would require index formulas for the associated nilpotent Lie poset algebras, and no such formulas are presently known beyond type~$A$.

%%%%%%%%%%%%%%%%%%%%%%%%%%%%%%%%%%%%%%

\subsection{Another invariant: breadth}

In this paper we have established combinatorial methods for computing the index of the nilradical of (type-$A$) seaweed algebras.  Another invariant of interest in the study of nilpotent Lie algebras is the \emph{breadth} \cite{breadthposet,breadth1,breadth4,breadth3,breadth2}.  The \demph{breadth} of a Lie algebra $\mathfrak{g}$ is defined by
\[
\mathrm{b}(\mathfrak{g})=\max_{x\in\mathfrak{g}}\operatorname{rank}(\operatorname{ad}_x).
\]
In general one has the bound
\begin{equation}\label{eq:1}
    \mathrm{b}(\mathfrak{g})\le \dim[\mathfrak{g},\mathfrak{g}]
\end{equation}
since $\operatorname{im}(\operatorname{ad}_x)\subseteq[\mathfrak{g},\mathfrak{g}]$ for all $x\in\mathfrak{g}$, where $[\mathfrak{g},\mathfrak{g}]:=\operatorname{span}\{[x,y]\mid x,y\in\mathfrak{g}\}$ denotes the \demph{derived algebra} of $\mathfrak{g}$.

As a simple application of~\eqref{eq:1}, one finds that if $\mathfrak{s}$ is a seaweed
subalgebra of $\mathfrak{gl}(N,\mathbb{C})$, then
\[
\mathrm{b}(\mathfrak{s})=\dim\mathfrak{s}-N;
\]
while if $\mathfrak{s}$ is a seaweed subalgebra of $\mathfrak{sl}(N,\mathbb{C})$, then one has
\[
\mathrm{b}(\mathfrak{s})=\dim\mathfrak{s}-N+1.
\]
For the breadth of the nilradical $\mathfrak{n}(\mathfrak{s})$ of a (type-$A$) seaweed
algebra, however, no general formula is currently known.  Although partial information
can be extracted from the breadth formulas for nilpotent Lie poset algebras found in \cite{breadthposet} together with the structural results of Section~\ref{sec:posetalg}, a complete combinatorial description remains open. This leads naturally to the following questions.

\begin{que*}
Given a (type-$A$) seaweed algebra $\mathfrak{s}$, does there exist a meander-theoretic procedure for computing the breadth $\mathrm{b}(\mathfrak{n}(\mathfrak{s}))$?
\end{que*}

\begin{que*}
Let $\mathfrak{s}$ be a (type-$A$) seaweed algebra.  Do there exist combinatorial conditions characterizing when the breadth of $\mathfrak{n}(\mathfrak{s})$ attains the upper bound in~\eqref{eq:1}; that is, $$\mathrm{b}(\mathfrak{n}(\mathfrak{s}))=
\dim[\mathfrak{n}(\mathfrak{s}),\mathfrak{n}(\mathfrak{s})]?$$ More specifically, can such conditions be expressed in terms of the associated poset $P_{\mathfrak{s}}$ or one of the meanders
$\mathrm{M}(\mathfrak{s})$, $\mathrm{M}_{\mathfrak{p}}(\mathfrak{s})$, or $\mathrm{M}_{\mathfrak{n}}(\mathfrak{s})$?
\end{que*}

%%%%%%%%%%%%%%%%%%%%%%%%%%%%%%%%

\bibliographystyle{abbrv}
\bibliography{seaweed}

\end{document}